\newtheorem{thm}{Theorem}[section]
\newtheorem{cor}[thm]{Corollary}
\newtheorem{lem}[thm]{Lemma}
\newtheorem{prop}[thm]{Proposition}
\newtheorem{defn}[thm]{Definition}
\theoremstyle{definition}
\numberwithin{equation}{section}
\renewcommand{\Re}{\hbox{Re}\,}
\renewcommand{\Im}{\hbox{Im}\,}
\newcommand{\C}{\mathbb{C}}
\renewcommand{\div}{\operatorname{div}}
\newcommand{\loc}{\operatorname{loc}}
\newcommand{\R}{\mathbb{R}}
\newcommand{\supp}{\operatorname{supp}}
\def\hat{\widehat}
\def\tilde{\widetilde}
\def \bfo {\begin {eqnarray*} }
\def \efo {\end {eqnarray*} }
\def \ba {\begin {eqnarray*} }
\def \ea {\end {eqnarray*} }
\def \beq {\begin {eqnarray}}
\def \eeq {\end {eqnarray}}
\def \supp {\hbox{supp }}
\def \p {\partial}
\def\hat{\widehat}
\def\tilde{\widetilde}
\def \bfo {\begin {eqnarray*} }
\def \efo {\end {eqnarray*} }
\def \ba {\begin {eqnarray*} }
\def \ea {\end {eqnarray*} }
\def \beq {\begin {eqnarray}}
\def \eeq {\end {eqnarray}}
\def \supp {\hbox{supp }}
\def \p {\partial}
\begin{document}

 \title[Advection diffusion equations in admissible geometries]{Inverse problems for advection diffusion equations in admissible geometries}

\author[Krupchyk]{Katya Krupchyk}

\address
        {K. Krupchyk, Department of Mathematics\\
University of California, Irvine\\ 
CA 92697-3875, USA }

\email{katya.krupchyk@uci.edu}

\author[Uhlmann]{Gunther Uhlmann}

\address
       {G. Uhlmann, Department of Mathematics\\
       University of Washington\\
       Seattle, WA  98195-4350\\
       USA\\
       Department of Mathematics and Statistics\\ 
       University of Helsinki\\ 
       Finland\\
        and Institute for Advanced Study of the Hong Kong University of Science and Technology}
\email{gunther@math.washington.edu}

\maketitle

\begin{abstract} We study inverse boundary problems for the advection diffusion equation on an admissible manifold, i.e.  a compact Riemannian manifold with boundary of dimension $\ge 3$, which is conformally embedded in a product of the Euclidean real line and a simple manifold.  We prove the unique identifiability of the advection term of class $H^1\cap L^\infty$ and of class $H^{2/3}\cap C^{0,1/3}$  from the knowledge of the associated Dirichlet--to--Neumann map on the boundary of the manifold. This seems to be  the first global identifiability result for possibly discontinuous advection terms. 

\end{abstract}

\section{Introduction and statement of results}

Let $(M,g)$ be a   smooth compact oriented Riemannian manifold of dimension $n\ge 3$ with smooth boundary $\p M$.  Let $X\in L^{\infty}(M,TM)$ be a real vector field. In this paper we shall be concerned with an inverse boundary problem for the advection diffusion operator given by 
\[
L_{X}=-\Delta+X. 
\]
Specifically, our main focus is on establishing a global uniqueness result in the low regularity setting, allowing the advection term $X$ to be discontinuous.   Let us now proceed to introduce the precise assumptions and state the main results of this paper.  

Let $u\in H^1(M^0)$ be a solution to 
\begin{equation}
\label{eq_int_2}
L_{X}u=0 \quad \text{in}\quad \mathcal{D}'(M^0).   
\end{equation}
Here and in what follows $H^s(M^0)$, $s\in \R$, is the standard Sobolev space on $M^0$, see \cite[Chapter 4]{Taylor_book_1}, and $M^0=M\setminus\p M$ stands for the interior of $M$. We also let $\nu$ be the unit outer normal to the boundary of $M$. 
We shall define the trace of the normal derivative $\p_{\nu}u\in H^{-1/2}(\p M)$ as follows. Let $\varphi\in H^{1/2}(\p M)$. Then letting $v\in H^1(M^0)$ be a continuous extension of $\varphi$, we set 
\begin{equation}
\label{eq_int_2_2}
\langle \p_\nu u, \varphi \rangle_{H^{-1/2}(\p M)\times H^{1/2}(\p M)}=\int_{M} \langle \nabla u, \nabla v\rangle dV  +\int_M X(u)v dV,
\end{equation}
where $dV$ is the Riemannian volume element on $M$. 
As $u$ satisfies \eqref{eq_int_2}, the definition of the trace $\p_\nu u$ on $\p M$ is independent of the choice of an extension $v$ of $\varphi$.

For $f\in H^{1/2}(\p M)$, the Dirichlet problem, 
\begin{equation}
\label{eq_int_2_Dirichlet}
\begin{aligned}
&L_{X}u=0 \quad \text{in}\quad \mathcal{D}'(M^0),\\  
&u|_{\p M}=f,
\end{aligned}
\end{equation}
has the unique solution $u\in H^1(M^0)$, see \cite[Chapter 3, Section 8.2]{Aubin_book}. Thus,  we can define the Dirichlet--to--Neumann map,
\begin{equation}
\label{eq_int_2_2_Dirichlet_to_Neumann}
\Lambda_X:  H^{1/2}(\p M)\to H^{-1/2}(\p M), \quad f\mapsto \p_\nu u|_{\p M}. 
\end{equation}
The inverse boundary problem for the advection diffusion equation that we are interested in is to determine  the vector field $X$ from the knowledge of the Dirichlet--to--Neumann map $\Lambda_X$.  

This problem was studied extensively in the Euclidean case, see \cite{Cheng_Nakamura_Somersalo}, \cite{Knudsen_Salo}, \cite{Salo_diss}, \cite{Pohjola}. The sharpest result in terms of the regularity of the advection term is due to \cite{Pohjola}, showing the global uniqueness in the inverse boundary problem for the advection diffusion equation, with a H\"older continuous advection term, with the H\"older exponent in the range $(2/3,1]$.

It turns out that the inverse boundary problem for the advection diffusion  equation can be reduced to an inverse boundary problem for the magnetic Schr\"odinger equation, at least formally.  This connection has been exploited in the aforementioned works in the Euclidean case, and it can also be used to establish some global uniqueness results on manifolds. We shall therefore proceed now to recall the inverse boundary problem for the magnetic Schr\"odinger operator and to use this link to derive some preliminary global uniqueness results for the advection diffusion problem. 

Let  $A\in L^\infty (M,T^*M)$ and $q\in L^\infty(M,\C)$. The magnetic Schr\"odinger operator is defined by
\begin{equation}
\label{eq_int_1_magnetic}
\begin{aligned}
L_{A,q}u&=(d_{\overline{A}}^*d_A+q)u\\
&= -\Delta u +i d^*(Au)- i \langle A,du\rangle + (\langle A, A\rangle+q)u,\quad u\in C^\infty_0(M^0).
\end{aligned}
\end{equation}
Here $d$ is the de Rham differential, $d_A=d+i A$, and $d^*$, $d_A^*$ are the formal $L^2$--adjoints of $d$ and $d_A$, respectively. 

Let $u\in H^1(M^0)$ be a solution to 
\begin{equation}
\label{eq_int_3_mag}
L_{A,q} u=0
\end{equation}
in $\mathcal{D}'(M^0)$. Associated to $u$ is  the trace of the magnetic normal derivative $\langle d_Au , \nu \rangle\in H^{-1/2}(\p M)$ defined as follows, 
\begin{equation}
\label{eq_trace_normal_int_mag}
\begin{aligned}
\langle \langle d_A u, \nu \rangle, \varphi \rangle_{H^{-\frac{1}{2}}(\p M)\times H^{\frac{1}{2}}(\p M)}&:=(d_A u,d_{\overline{A}} \overline{v})_{L^2(M)}+(qu, \overline{v})_{L^2(M)}\\
&=\int_M \langle du+i A u, dv-iA v \rangle dV +\int_M q uv dV,
\end{aligned}
\end{equation}
where  $\varphi\in H^{1/2}(\p M)$ and $v\in H^1(M^0)$  is  a continuous extension of $\varphi$.

The set of the Cauchy data for solutions of the magnetic Schr\"odinger equation is given by 
\[
C_{A,q}:=\{ (u|_{\p M}, \langle d_A u, \nu \rangle_g|_{\p M}):  u\in H^1(M^0) \text{ satisfies } L_{A,q} u=0\text{ in } \mathcal{D}'(M^0) \}. 
\]
The inverse boundary value problem for the magnetic Schr\"odinger operator  is to
determine $A$ and $q$ in  $M$ from the knowledge of the set of the Cauchy data $C_{A,q}$.  A well-known feature of this problem is that there is an obstruction to uniqueness given by the following class of gauge transformations, see \cite{DKSaloU_2009}. Let $F\in W^{1,\infty}(M^0)$ be a non-vanishing function. For any $u\in H^1(M^0)$,  we have
\[
(F^{-1}\circ L_{A,q}\circ F)u=L_{A -i F^{-1}dF,q}u, 
\]
in $\mathcal{D}'(M^0)$. Furthermore, if  $F|_{\p M}=1$, a computation using \eqref{eq_trace_normal_int_mag} shows that 
\[
C_{A,q}=C_{A -i F^{-1}dF,q}.
\] 
Hence, from the knowledge of the set of the Cauchy data $C_{A,q}$ one may only hope to recover uniquely the magnetic field $dA$ and electric potential $q$ in $M$. 

The fundamental work \cite{DKSaloU_2009} initiated the study of inverse boundary problems on a class of compact Riemannian  manifolds with boundary, called admissible manifolds, the definition of which we shall now recall.

\begin{defn}
A  compact Riemannian manifold $(M,g)$ of dimension $n\ge 3$ with boundary $\p M$ is called admissible if there exists  an $(n-1)$--dimensional compact simple manifold $(M_0,g_0)$ such that $M\subset\R\times M_0$  and  $g=c(e\oplus g_0)$ where $e$ is the Euclidean metric on $\R$ and $c$ is a positive smooth function on $M$. 
\end{defn}

\begin{defn}
A compact manifold $(M_0, g_0)$ with boundary is called simple if for any $p\in M_0$, the exponential map $\exp_p$ with  its maximal domain of definition in $T_p M_0$ is a diffeomorphism onto $M_0$, and if $\p M_0$ is strictly convex in the sense that the second fundamental form of $\p M_0\hookrightarrow M_0$ is positive definite.  
\end{defn}

The inverse boundary problem for the magnetic Schr\"odinger operator on admissible manifolds was studied in  \cite{DKSaloU_2009} and the global uniqueness was established for $C^\infty$ electric and magnetic potentials.  The problem of weakening the regularity assumptions on the potentials in the context of admissible manifolds was addressed in \cite{DKSalo_2013} when $A=0$ and $q\in L^{\frac{n}{2}}(M)$, and in our recent work \cite{Krup_Uhlmann_magnet_manifolds} when $A\in L^\infty$ and $q\in L^\infty$. Let us state one of the main results of \cite{Krup_Uhlmann_magnet_manifolds}. 

\begin{thm}
\label{thm_old}
Let $(M,g)$ be admissible. Let $A^{(1)}, A^{(2)}\in L^\infty(M, T^*M)$ be complex valued $1$-forms, and $q^{(1)},q^{(2)}\in L^\infty(M,\C)$. If $C_{A^{(1)},q^{(1)}}=C_{A^{(2)},q^{(2)}}$, then $dA^{(1)}=dA^{(2)}$ and $q^{(1)}=q^{(2)}$. 
 \end{thm}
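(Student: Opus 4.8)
The plan is to follow the complex geometric optics (CGO) strategy for inverse problems on admissible manifolds, as developed by Dos Santos Ferreira--Kenig--Salo--Uhlmann, adapted so as to tolerate an $L^\infty$ magnetic potential. Write $A=A^{(1)}-A^{(2)}$ and $q=q^{(1)}-q^{(2)}$. First I would record the basic integral identity: combining a boundary determination step (which shows that $C_{A^{(j)},q^{(j)}}$ determines the tangential part of $A^{(j)}$ on $\p M$) with the gauge invariance noted before the definition of admissible manifolds, one may, after replacing $A^{(1)}$ by $A^{(1)}-iF^{-1}dF$ with $F|_{\p M}=1$ and extending the data to a slightly larger admissible manifold, assume the potentials agree near $\p M$. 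The equality of Cauchy sets together with the symmetry of the bilinear form attached to $L_{A,q}$ then yields, for every $u_1\in H^1(M^0)$ with $L_{A^{(1)},q^{(1)}}u_1=0$ and every $v_2\in H^1(M^0)$ with $L_{A^{(2)},q^{(2)}}v_2=0$,
\[
\int_M\Big( i\langle A,\, u_1\, dv_2 - v_2\, du_1\rangle + \big(q+\langle A,A^{(1)}+A^{(2)}\rangle\big)\,u_1 v_2\Big)\,dV=0 .
\]

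Next I would construct families of CGO solutions using the limiting Carleman weight $\varphi(x)=x_1$ coming from the product structure $M\subset\R\times M_0$. Seek $u_1=e^{-s\rho}(a_1+r_1)$ and $v_2=e^{s\overline\rho}(a_2+r_2)$ with $s=h^{-1}+i\lambda$, $\lambda\in\R$, and $\rho=x_1+i\psi$, where $\psi=\psi(x')$ solves the transversal eikonal equation on $(M_0,g_0)$ and the amplitudes $a_j$ are Gaussian-beam (WKB) amplitudes concentrating, as $h\to0$, on a fixed nontangential geodesic of $(M_0,g_0)$ and carrying an oscillation $e^{\mp i\lambda x_1}$. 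The essential point for the $L^\infty$ setting is a splitting $A^{(j)}=A^{(j)}_\sharp+A^{(j)}_\flat$ with $A^{(j)}_\sharp\in C^\infty$ and $\|A^{(j)}_\flat\|_{L^\infty(M)}<\delta$ for a small $\delta$: the smooth part is absorbed into the first transport equation for $a_j$ (which stays smooth, since $A^{(j)}_\sharp$ is), while the small rough part $A^{(j)}_\flat$, together with the bounded zeroth-order coefficients (including $\langle A^{(j)},A^{(j)}\rangle$ and the cross terms $A^{(j)}_\sharp\cdot A^{(j)}_\flat$), is treated as a perturbation. The remainders $r_j$ are then produced by a Carleman estimate of the form
\[
\|v\|_{H^1_{\mathrm{scl}}(M^0)}\le \frac{C}{h}\,\big\|e^{\varphi/h}\circ h^2 L_{A^{(j)},q^{(j)}}\circ e^{-\varphi/h}\,v\big\|_{L^2(M^0)},\qquad v\in C_0^\infty(M^0),
\]
valid for $h$ small; here the $L^\infty$ magnetic term contributes at most $C\delta\|v\|_{H^1_{\mathrm{scl}}}$, which is absorbed after choosing $\delta$ small, yielding $\|r_j\|_{H^1_{\mathrm{scl}}}=o(1)$ (indeed $O(h)$ up to the amplitude error) as $h\to0$.

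Finally, inserting these CGO solutions into the integral identity, the leading $e^{\pm x_1/h}$ factors cancel in the product $u_1 v_2$, leaving an oscillatory factor $e^{-2i\psi/h}$ in $x'$ and $e^{-2i\lambda x_1}$ in $x_1$, with the amplitudes localized on the chosen geodesic. Letting $h\to0$ (so the $r_j$ drop out) and applying stationary phase in the transversal variables, the identity reduces, for each $\lambda\in\R$ and each nontangential geodesic of $(M_0,g_0)$, to the vanishing of a (possibly attenuated, the attenuation built from $A^{(j)}_\sharp$) geodesic ray transform over $(M_0,g_0)$ of the partial Fourier transform in $x_1$ of the components of the magnetic field $dA=dA^{(1)}-dA^{(2)}$ and of $q$. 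Injectivity of the geodesic ray transform on the simple manifold $(M_0,g_0)$ --- for functions, and for $1$-forms modulo exact $1$-forms --- then forces first $dA^{(1)}=dA^{(2)}$, and, feeding this back into the identity, $q^{(1)}=q^{(2)}$.

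I expect the main obstacle to be Step 2: producing CGO solutions whose remainder vanishes in the limit even though the term $i\langle A,du\rangle$ with $A\in L^\infty$ is of the same order as the quantities that the Carleman estimate controls. The smooth/small decomposition repairs this, but it requires the Carleman estimate to carry a genuine (small) absolute constant in front of the first-order perturbation --- equivalently a slight gain over the naive estimate --- and one must check carefully that all the quadratic-in-$A$ and cross terms land either in $C^\infty$ (absorbable into the transport equations) or in a small ball of $L^\infty$ (absorbable by the estimate), while keeping the amplitude transport equations solvable along nontangential geodesics in the simple factor. The remaining ingredients --- injectivity of ray transforms on simple manifolds and the Gaussian-beam/stationary-phase asymptotics --- are by now standard, and the low-regularity boundary determination in Step 1, while technical, is also available.
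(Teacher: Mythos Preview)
There is a genuine gap in your Step~2. The splitting $A^{(j)}=A^{(j)}_\sharp+A^{(j)}_\flat$ with $A^{(j)}_\sharp\in C^\infty$ and $\|A^{(j)}_\flat\|_{L^\infty(M)}<\delta$ does not exist for a general $A^{(j)}\in L^\infty$: smooth (indeed, continuous) functions are not norm--dense in $L^\infty$, so a discontinuous magnetic potential admits no such decomposition for small $\delta$. Hence the scheme you describe --- feed $A^{(j)}_\sharp$ into the transport equation and absorb $A^{(j)}_\flat$ as a small $L^\infty$ perturbation in the Carleman estimate --- cannot be carried out, and the remainder bound $\|r_j\|_{H^1_{\mathrm{scl}}}=o(1)$ is not justified.

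This theorem is not proved in the present paper but quoted from \cite{Krup_Uhlmann_magnet_manifolds}; the method there, reproduced in Sections~\ref{sec_Carleman_est}--\ref{sec_CGO} for the advection operator, repairs precisely this point. One regularizes $A^{(j)}$ by smooth $A^{(j)}_\tau$ with $\|A^{(j)}-A^{(j)}_\tau\|_{L^2}=o(1)$, $\|A^{(j)}_\tau\|_{L^\infty}=\mathcal{O}(1)$, $\|\nabla A^{(j)}_\tau\|_{L^\infty}=\mathcal{O}(\tau^{-1})$ (density in $L^2$ \emph{is} available), solves the transport equation with $A^{(j)}_\tau$, and then controls the source term for $r_j$ --- which is small only in $L^2$ and also contains the divergence--type piece $i\,d^*(Au)$ --- by a Carleman estimate with a gain of \emph{two} derivatives, i.e.\ with the $H^{-1}_{\mathrm{scl}}$ norm on the right--hand side rather than the $L^2$ norm you wrote (Proposition~\ref{prop_Carleman_est_gain_2}). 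Coupling $\tau=h^{1/2}$ then yields $\|r_j\|_{H^1_{\mathrm{scl}}}=o(1)$. A secondary point: on an admissible manifold the amplitudes are built via global polar normal coordinates on the simple transversal factor and a $\bar\partial$--equation for the phase correction $\Phi_\tau$, not Gaussian beams with stationary phase; after that, your ray--transform step is essentially the right one.
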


Let $X\in W^{1,\infty}(M^0, TM^0)$ and let $X^\flat=g_{jk}X^j dx^k$ be the corresponding one form. 
A direct computation using \eqref{eq_int_1_magnetic} shows that 
\[
L_{X}=L_{A,q},
\]
where
\begin{equation}
\label{eq_int_reduction_mag}
A=\frac{iX^\flat}{2}, \quad q=\frac{1}{4} \langle X, X\rangle-\frac{1}{2} \div(X). 
\end{equation}
Furthermore, if $u\in H^1(M^0)$ is a solution to $L_{X}u=L_{A,q}u=0$, then for any $\varphi \in H^{\frac{1}{2}}(\p M)$, we have
\[
\langle \langle d_A u, \nu \rangle, \varphi \rangle_{H^{-\frac{1}{2}}(\p M)\times H^{\frac{1}{2}}(\p M)}=\langle \p_\nu u, \varphi  \rangle_{H^{-\frac{1}{2}}(\p M)\times H^{\frac{1}{2}}(\p M)}-\frac{1}{2} \int_{\p M} \langle X,\nu \rangle u \varphi dS. 
\]
Hence, if $\Lambda_{X^{(1)}}=\Lambda_{X^{(2)}}$ then by Corollary \ref{cor_boundary_rec} we have 
$X^{(1)}=X^{(2)}$ on $\p M$,  and thus, $C_{A^{(1)},q^{(1)}}=C_{A^{(2)},q^{(2)}}$. Similarly to the Euclidean case, see \cite[Theorem 1.10]{Salo_diss}, the reduction described above allows us to obtain the following result as  a consequence of  Theorem \ref{thm_old}. This result can be viewed as an extension of \cite[Theorem 1.10]{Salo_diss} to the setting of admissible manifolds.    
\begin{thm}
\label{thm_conv_1}
Let $(M,g)$ be admissible and let $X^{(1)}, X^{(2)}\in W^{1,\infty}(M^0, TM^0)$ be real vector fields. If $\Lambda_{X^{(1)}} =\Lambda_{X^{(2)}}$ then $X^{(1)}=X^{(2)}$. 
 \end{thm}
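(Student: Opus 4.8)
The plan is to transfer the problem to the magnetic Schr\"odinger setting through the reduction \eqref{eq_int_reduction_mag} and then to apply Theorem~\ref{thm_old}. Given real $X^{(j)}\in W^{1,\infty}(M^0,TM^0)$, set $A^{(j)}=\tfrac{i}{2}(X^{(j)})^\flat\in L^\infty(M,T^*M)$ and $q^{(j)}=\tfrac14\langle X^{(j)},X^{(j)}\rangle-\tfrac12\div(X^{(j)})\in L^\infty(M,\C)$, so that $L_{X^{(j)}}=L_{A^{(j)},q^{(j)}}$. As recalled just before the statement, $\Lambda_{X^{(1)}}=\Lambda_{X^{(2)}}$ together with Corollary~\ref{cor_boundary_rec} gives $X^{(1)}=X^{(2)}$ on $\p M$, which makes the boundary corrections $\tfrac12\int_{\p M}\langle X^{(j)},\nu\rangle u\varphi\,dS$ relating $\p_\nu u$ and $\langle d_{A^{(j)}}u,\nu\rangle$ coincide, hence $C_{A^{(1)},q^{(1)}}=C_{A^{(2)},q^{(2)}}$. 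Theorem~\ref{thm_old} then yields $dA^{(1)}=dA^{(2)}$ and $q^{(1)}=q^{(2)}$.

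It remains to upgrade this gauge-equivalent information to the pointwise identity $X^{(1)}=X^{(2)}$, and this is where the explicit form of $A^{(j)},q^{(j)}$ is exploited. Put $Y=X^{(1)}-X^{(2)}\in W^{1,\infty}(M^0,TM^0)$. From $dA^{(1)}=dA^{(2)}$ the one-form $Y^\flat$ is closed, and from $q^{(1)}=q^{(2)}$, using $\langle X^{(1)},X^{(1)}\rangle-\langle X^{(2)},X^{(2)}\rangle=\langle X^{(1)}+X^{(2)},Y\rangle$, one obtains
\begin{equation*}
\div(Y)=\tfrac12\langle X^{(1)}+X^{(2)},Y\rangle\quad\text{in }M^0,
\end{equation*}
while $Y=0$ on $\p M$ by the boundary determination already invoked.

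The next step is to build a single-valued primitive of $Y^\flat$, and here admissibility enters. Since $(M_0,g_0)$ is simple, $M_0$ is diffeomorphic to a closed ball, so the ambient manifold $\R\times M_0$ is contractible and $H^1_{\mathrm{dR}}(\R\times M_0)=0$. Because $Y^\flat\in W^{1,\infty}$ vanishes on $\p M$, its extension by zero is a closed Lipschitz one-form on $\R\times M_0$, hence exact; a primitive restricts to $\psi\in W^{2,\infty}(M)$ with $d\psi=Y^\flat$ on $M$, and $\psi$ is locally constant on $(\R\times M_0)\setminus M$, so $\psi$ is constant on each component of $\p M$ and $\nabla\psi=Y=0$ there. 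Substituting $\nabla\psi=Y$ into the displayed relation gives
\begin{equation*}
\Delta_g\psi-\tfrac12\langle X^{(1)}+X^{(2)},\nabla\psi\rangle=0\quad\text{in }M^0,
\end{equation*}
a uniformly elliptic second-order equation with bounded coefficients and no zeroth-order term. Each boundary component is a level set of $\psi$ along which $\p_\nu\psi=0$, so by the maximum principle together with Hopf's lemma --- or, equivalently, by unique continuation from $\p M$ --- the function $\psi$ is constant. Therefore $Y^\flat=d\psi=0$, i.e.\ $X^{(1)}=X^{(2)}$.

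I do not expect a genuine obstacle here, since Theorem~\ref{thm_old} carries the analytic weight; the real point is the gauge elimination in the last two steps. Theorem~\ref{thm_old} only recovers $dA$ and $q$, so one must combine (i)~the exactness of $(X^{(1)})^\flat-(X^{(2)})^\flat$, which genuinely uses the admissibility of $M$ --- namely that it sits inside a product $\R\times M_0$ with a contractible transversal factor --- and (ii)~the divergence identity coming from $q^{(1)}=q^{(2)}$, which converts the problem into a zeroth-order-free elliptic equation annihilated by the maximum principle. In the low-regularity regime $X^{(j)}\in W^{1,\infty}$ one should also verify two routine facts: that the extension of $Y^\flat$ by zero remains closed across $\p M$ (valid precisely because $Y^\flat$ vanishes there in the $W^{1,\infty}$ sense), and that $\psi\in W^{2,\infty}\subset C^{1,1}$ is regular enough for the maximum principle and Hopf's lemma to apply.
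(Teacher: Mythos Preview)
Your argument is correct and follows the route the paper indicates: reduce to the magnetic Schr\"odinger problem via \eqref{eq_int_reduction_mag}, invoke Corollary~\ref{cor_boundary_rec} to match boundary values and hence Cauchy data, apply Theorem~\ref{thm_old}, and then eliminate the gauge. The paper itself does not spell out the gauge-elimination step for Theorem~\ref{thm_conv_1}, referring instead to \cite[Theorem~1.10]{Salo_diss}; the detailed version it gives later (for Theorem~\ref{thm_main}, Section~\ref{sec_proof}) differs from yours only cosmetically --- there the manifold is first enlarged so that $\p M$ is connected and the primitive vanishes on $\p M$, after which the weak maximum principle alone suffices, whereas you keep $M$ fixed, exploit both pieces of Cauchy data ($\psi$ constant on each component, $\p_\nu\psi=0$), and close with Hopf's lemma.
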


A natural question is whether one can weaken the regularity of the advection term in Theorem \ref{thm_conv_1} in order to reach some discontinuous advection terms. To this end, let us state the main result of the present paper. 
\begin{thm}
\label{thm_main}
Let $(M,g)$ be admissible and let $X^{(1)}, X^{(2)}\in (H^{1}\cap L^\infty)(M^0, TM^0)$ be real  vector fields.  If $\Lambda_{X^{(1)}} =\Lambda_{X^{(2)}}$ then $X^{(1)}=X^{(2)}$. 
\end{thm}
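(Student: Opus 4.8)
The plan is to reduce the problem to the inverse boundary problem for the magnetic Schr\"odinger operator, prove an extension of Theorem~\ref{thm_old} adapted to the rough potentials produced by this reduction, and then recover $X$ itself from the magnetic field and the electric potential by a unique continuation argument.

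Set $W=X^{(1)}-X^{(2)}$. Since $\Lambda_{X^{(1)}}=\Lambda_{X^{(2)}}$, Corollary~\ref{cor_boundary_rec} gives $X^{(1)}=X^{(2)}$ on $\p M$, hence $W|_{\p M}=0$, and then the boundary identity relating $\langle d_Au,\nu\rangle$ to $\p_\nu u$ yields $C_{A^{(1)},q^{(1)}}=C_{A^{(2)},q^{(2)}}$ with $A^{(j)}=\frac{i}{2}(X^{(j)})^\flat$ and $q^{(j)}=\frac{1}{4}\langle X^{(j)},X^{(j)}\rangle-\frac{1}{2}\div(X^{(j)})$, the computation \eqref{eq_int_reduction_mag} remaining valid for $X^{(j)}\in H^1\cap L^\infty$. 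The difficulty is that Theorem~\ref{thm_old} does not apply directly: while $A^{(j)}\in(H^1\cap L^\infty)(M,T^*M)$, the potential $q^{(j)}$ is only in $L^2(M)$, being the sum of the bounded function $\frac{1}{4}\langle X^{(j)},X^{(j)}\rangle$ and the distributional divergence $\div(-\frac{1}{2}X^{(j)})$ of an $H^1\cap L^\infty$ vector field. So the technical core is to prove the following: if $(M,g)$ is admissible, $A^{(1)},A^{(2)}\in(H^1\cap L^\infty)(M,T^*M)$ and $q^{(j)}=q_0^{(j)}+\div Y^{(j)}$ with $q_0^{(j)}\in L^\infty(M,\C)$ and $Y^{(j)}\in(H^1\cap L^\infty)(M,TM)$, then $C_{A^{(1)},q^{(1)}}=C_{A^{(2)},q^{(2)}}$ forces $A^{(1)}-A^{(2)}=d\Psi$ for some $\Psi\in H^2(M^0)$ with $\Psi|_{\p M}=0$, together with $q^{(1)}=q^{(2)}$.

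To prove this extension I would follow \cite{Krup_Uhlmann_magnet_manifolds}: use the admissible structure $g=c(e\oplus g_0)$ and the limiting Carleman weight $\varphi(x)=x_1$ to build complex geometric optics solutions of the form $u_j=e^{\mp\frac{1}{h}(\varphi+i\psi_0)}(a_j+r_j)$ (with signs chosen according to which equation $u_j$ solves), where $\psi_0$ is constructed from distance functions on the simple manifold $(M_0,g_0)$, the amplitude $a_j$ solves the transport equation along the geodesics of $(M_0,g_0)$ carrying the attenuation determined by $A^{(j)}$ (uniquely solvable with $a_j\in H^1\cap L^\infty$ precisely because $A^{(j)}$ is), and the correction satisfies $\|r_j\|_{L^2(M)}=O(h)$ and $\|r_j\|_{H^1(M)}=O(1)$ by the Carleman estimates of \cite{Krup_Uhlmann_magnet_manifolds}, which already accommodate a first-order perturbation in $H^1\cap L^\infty$. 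Substituting into the integral identity furnished by $C_{A^{(1)},q^{(1)}}=C_{A^{(2)},q^{(2)}}$ and integrating by parts to transfer the derivative in $\div Y^{(j)}$ onto the amplitudes — so that the cross terms involving $\nabla r_j$ are $O(h)$ — one obtains, after Fourier transform in $x_1$ and passage to the limit $h\to0$, the vanishing of the geodesic ray transform on $(M_0,g_0)$ of the one-form built from $A^{(1)}-A^{(2)}$; its injectivity on solenoidal one-forms on a simple manifold gives $A^{(1)}-A^{(2)}=d\Psi$ with $\Psi|_{\p M}=0$, and inserting this back into a second such identity recovers $q^{(1)}=q^{(2)}$, as in the Schr\"odinger case. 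From $A^{(1)}-A^{(2)}=d\Psi$ we then get $W^\flat=d\psi$ with $\psi=-2i\Psi\in H^2(M^0)$ real and $\psi|_{\p M}=0$, hence $W=\nabla_g\psi$, while $q^{(1)}=q^{(2)}$ gives $\div W=\frac{1}{2}(\langle X^{(1)},X^{(1)}\rangle-\langle X^{(2)},X^{(2)}\rangle)=\frac{1}{2}\langle X^{(1)}+X^{(2)},W\rangle$; thus $\psi$ solves
\[
\Delta_g\psi-\frac{1}{2}\langle X^{(1)}+X^{(2)},\nabla_g\psi\rangle=0\quad\text{in }M^0,\qquad \psi|_{\p M}=0,
\]
an elliptic equation with bounded coefficients (as $X^{(j)}\in L^\infty$) and no zeroth-order term, so by the weak maximum principle $\psi\equiv0$ on $M$, whence $W=\nabla_g\psi=0$, i.e. $X^{(1)}=X^{(2)}$.

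The main obstacle is the magnetic extension: one must run the complex geometric optics construction and the Carleman estimates of \cite{Krup_Uhlmann_magnet_manifolds} for genuinely rough potentials — a first-order term only in $H^1\cap L^\infty$ and a zeroth-order term in $L^2$ of divergence type — while keeping every remainder controlled uniformly as $h\to0$. What makes this feasible is that the $L^2$ divergence-form potential, which is too rough to be treated as an honest zeroth-order perturbation in dimensions $n\ge3$, can after integration by parts against the exponentially weighted solutions be absorbed into an effectively bounded first-order perturbation of the kind already handled; checking that this absorption is compatible both with the solvability of the amplitude transport equation and with the closing of the Carleman estimate is the crux of the argument.
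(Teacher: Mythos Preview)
Your overall architecture --- recover $d(X^{(1)}-X^{(2)})^\flat=0$ from a first CGO substitution, write the difference as a gradient $\nabla\psi$ via Poincar\'e, run a second CGO identity to force $\psi$ to satisfy a drift--Laplace equation with zero Dirichlet data, and conclude by the maximum principle --- is exactly what the paper does. The magnetic Schr\"odinger reformulation is only a repackaging: the paper works directly with $L_X$ and its adjoint, but the steps correspond one--to--one. Two of your stated technical claims, however, do not hold as written, and one of them is precisely the crux of the argument.

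First, a minor point: Corollary~\ref{cor_boundary_rec} is for \emph{continuous} advection fields, so it does not apply to $X^{(j)}\in H^1\cap L^\infty$, which may well be discontinuous. The correct reference is Theorem~\ref{prop_boundary_rec}, whose proof is designed for this class.

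Second, and this is the real gap: your treatment of the divergence--form potential in the second identity is incorrect. After integrating by parts, $\int\div(\tilde Y)\,u_1\bar u_2=-\int\tilde Y(u_1\bar u_2)$ distributes the derivative over \emph{both} factors, so the cross contribution $\int\alpha_1\,\langle\tilde Y,\nabla\bar r_2\rangle$ survives and is bounded only by $\|\alpha_1\|_{L^\infty}\|\tilde Y\|_{L^\infty}\|\nabla r_2\|_{L^2}=O(1)$ under your own remainder hypothesis $\|r_2\|_{H^1}=O(1)$; it is not $O(h)$. Integration by parts alone does not ``transfer the derivative onto the amplitudes''. The paper closes exactly this term by a further smoothing: approximate $\tilde Y$ (there written as $\nabla\phi$) by $\tilde Y_\tau$, integrate the smooth piece back by parts to produce $\div\tilde Y_\tau$, which stays in $L^2$ \emph{uniformly in} $\tau$ because $\tilde Y\in H^1$, and control the rough remainder by $\|\tilde Y-\tilde Y_\tau\|_{L^2}=o(\tau)$; see the computation leading to \eqref{eq_sec_9_3}. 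In the paper's version this also uses $\|\nabla\alpha\|_{L^\infty}=O(h^{-1/2})$, a bound available only because the transport equation is solved with the \emph{regularized} field $X_\tau$ rather than $X$ itself. If you solve the exact transport equation with $A\in H^1\cap L^\infty$, the Beurling transform of an $L^\infty$ function need not be bounded, so you lose $\|\nabla\alpha\|_{L^\infty}$ control. Relatedly, the statement that $a_j\in H^1\cap L^\infty$ is too weak to yield $\|r_j\|_{L^2}=O(h)$: that requires $\Delta a_j\in L^2$, i.e.\ $a_j\in H^2\cap L^\infty$, and controlling $|\nabla\Phi|^2$ in $L^2$ needs the Gagliardo--Nirenberg inequality, as in \eqref{eq_cgo_12_ampl_estimates_proff}. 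This three--way interplay --- amplitude regularization, remainder size, and a second smoothing of $\tilde Y$ --- is the technical heart of the proof and is what makes $s=1$ the threshold in the $H^s\cap L^\infty$ scale (cf.\ Remark~3). Your sketch names the obstacle correctly in its last paragraph but does not supply the mechanism that actually overcomes it.
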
 

To the best of our knowledge this is  the first global identifiability result for possibly discontinuous advection terms.

\textbf{Remark 1.} As explained in \cite{DKSaloU_2009}, smooth bounded domains in the Euclidean space provide examples of admissible manifolds. Hence the  result of Theorem \ref{thm_main} is valid in the Euclidean case. To the best of our knowledge this is a new result even in this setting.  

\textbf{Remark 2.} Let $\Omega\subset \R^n$, $n\ge 3$, be a smooth bounded domain and let $\sigma:\Omega\to (0,\infty)$.   The conductivity equation,
\[
\nabla \cdot (\sigma \nabla u)=0\quad  \text{in}\quad  \Omega, 
\]
 can be rewritten in the following form,
\[
-\Delta u+X\cdot \nabla u=0 \quad  \text{in}\quad  \Omega,
\] 
 where $X=-\nabla \log\sigma$. In the inverse conductivity problem of Calder\'on, one wishes to determine the conductivity $\sigma$ from the associated Dirichlet--to--Neumann map, see \cite{Uhlmann_review}. We note that the inverse boundary problem for the advection diffusion equation is more general and consequently more difficult than the Calder\'on problem, since here we are aiming to determine a general vector field rather than a gradient one. Furthermore, if $\sigma\in W^{1,\infty}\cap H^{3/2}$ then $X\in H^{1/2}\cap L^\infty$ and we know from \cite[Proposition 2.5]{Krup_Uhlmann_calderon} that the method of $L^2$ Carleman estimates with a gain of two derivatives allows one to construct complex geometric optics solutions of the conductivity equation having the form,
\[
u(x,h)=\gamma^{-1/2}e^{\frac{x\cdot \zeta}{h}}(1+r(x,h)), \quad \zeta\in \C^n, \quad \zeta\cdot\zeta=0,\quad 0<h\ll 1,
\] 
with the remainder term $\|r\|_{H^1_{\text{scl}}}=o(h^{1/2})$ as $h\to 0$. Such solutions are sufficient to recover a conductivity of class $W^{1,\infty}\cap H^{3/2+\varepsilon}$, $\varepsilon>0$,  from the Dirichlet--to--Neumann map, see \cite{Krup_Uhlmann_calderon}.  On the other hand, for a general vector field $X\in H^{1/2}\cap L^\infty$, it seems that the same method only allows one to construct complex geometric optics solutions, with the remainder term enjoying the estimate 
$\|r\|_{H^1_{\text{scl}}}=o(h^{1/4})$,  see Proposition \ref{prop_cgo-admiss} below, which is not strong enough to solve the inverse problem in this case. A rough reason for this is that in the case of the general advection diffusion equation, the vector field contributes to the transport equation for the amplitude, see \eqref{eq_cgo_4}, leading to worse estimates for the remainder. 

\textbf{Remark 3.} It seems that working in the scale of spaces $X \in H^s \cap L^\infty$, the condition $s=1$ corresponds to the largest space for which the inverse boundary problem for the advection diffusion equation can be solved by means of the techniques of $L^ 2$ Carleman estimates with a gain of two derivatives.

\textbf{Remark 4.}  Let $C^{0,\gamma}(M, E)$, $0<\gamma\le 1$, be the space of  H\"older continuous sections of a vector bundle $E$ over $M$, see  \cite[p. 42]{Hormander_1976}.  Similarly to \cite{Pohjola}, working in the scale of H\"older spaces, one should be able to recover the advection term $X\in C^{0, \frac{2}{3}+\varepsilon}(M, TM)$,   $\varepsilon>0$.  It seems that  $\gamma=2/3$ is a natural threshold for the inverse boundary problem for the advection diffusion equation to be solved by means of the techniques of $L^2$ Carleman estimates. We decided not to pursue   this direction since our main interest is in getting some accurate results for possibly discontinuous vector fields.  However, in Theorem \ref{thm_main} a further decrease in the Sobolev regularity, down to $s=2/3$, is possible at the expense of demanding some additional H\"older regularity for $X$. In this direction, we have the following result. 

\begin{thm}
\label{thm_main_2}
Let $(M,g)$ be admissible and let $X^{(1)}, X^{(2)}\in H^{\frac{2}{3}} (M^0, TM^0) \cap C^{0,\frac{1}{3}} (M, TM)$ be real  vector fields.  If $\Lambda_{X^{(1)}} =\Lambda_{X^{(2)}}$ then $X^{(1)}=X^{(2)}$. 
\end{thm}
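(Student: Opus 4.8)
The plan is to follow the proof of Theorem~\ref{thm_main}; the one genuinely new ingredient is the construction of complex geometric optics (CGO) solutions under the weaker hypothesis $X^{(j)}\in H^{2/3}\cap C^{0,1/3}$. First, by boundary determination (Corollary~\ref{cor_boundary_rec}), $\Lambda_{X^{(1)}}=\Lambda_{X^{(2)}}$ gives $X^{(1)}=X^{(2)}$ on $\pM$, so that, $2/3$ being $>1/2$, the difference $Y:=X^{(1)}-X^{(2)}$ extends by zero to a compactly supported element of $(H^{2/3}\cap C^{0,1/3})(\R\times M_0,T(\R\times M_0))$. Next, write $L_{X^{(j)}}=L_{A^{(j)},q^{(j)}}$ with $A^{(j)}=\frac{i}{2}(X^{(j)})^{\flat}\in(H^{2/3}\cap C^{0,1/3})(M,T^*M)$ and $q^{(j)}=\frac{1}{4}\langle X^{(j)},X^{(j)}\rangle-\frac{1}{2}\div X^{(j)}$ as in \eqref{eq_int_reduction_mag}: the H\"older part of the hypothesis gives $\langle X^{(j)},X^{(j)}\rangle\in L^\infty$, while $\div X^{(j)}$ is now only a distribution of order $-1/3$, so $q^{(j)}\in H^{-1/3}(M)+L^\infty(M)$ and Theorem~\ref{thm_old} is not directly applicable. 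From $\Lambda_{X^{(1)}}=\Lambda_{X^{(2)}}$ together with $X^{(1)}=X^{(2)}$ on $\pM$ one obtains $C_{A^{(1)},q^{(1)}}=C_{A^{(2)},q^{(2)}}$, hence the standard orthogonality identity of the magnetic inverse problem relating $A^{(1)}-A^{(2)}$ and $q^{(1)}-q^{(2)}$, interpreted weakly to absorb the distributional $q$-term, and valid for all $u_1$ solving $L_{A^{(1)},q^{(1)}}u_1=0$ and all $v$ solving $L_{\overline{A^{(2)}},\overline{q^{(2)}}}v=0$ in $\mathcal{D}'(M^0)$.

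For the CGO solutions, using $M\subset\R\times M_0$, $g=c(e\oplus g_0)$, I would take the limiting Carleman weight $\varphi(x)=\pm x_1$ and the transversal eikonal function $\psi(y)=d_{g_0}(\omega,y)$ — available after enlarging $M_0$ to a slightly larger simple manifold $\widehat{M}_0$ and fixing $\omega\in\widehat{M}_0\setminus M_0$ — so that $\Phi=\varphi+i\psi$ satisfies $\langle\nabla\Phi,\nabla\Phi\rangle_g=0$, and seek $u_j=e^{s\Phi_j}\big(c^{-(n-2)/4}a_j+r_j\big)$ with $s=h^{-1}\to\infty$. The amplitude $a_j$ solves the first-order transport equation issuing from the $O(h)$ part of $h^2e^{-s\Phi_j}L_{A^{(j)},q^{(j)}}e^{s\Phi_j}$; crucially this equation carries the coefficient $\langle A^{(j)},\nabla\Phi_j\rangle$, equivalently the advection contribution $\langle X^{(j)},\nabla\Phi_j\rangle$, cf.~\eqref{eq_cgo_4}, and can be solved with a gain of essentially one derivative over $X^{(j)}$, so $a_j$ is of finite smoothness and its $\psi$-dependence records the line integrals of the relevant components of $X^{(j)}$ along the radial geodesics of $\widehat{M}_0$. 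The remainder $r_j$ is produced from the $L^2$ Carleman estimate with a gain of two derivatives, applied to the equation with right-hand side $-h^2e^{-s\Phi_j}L_{A^{(j)},q^{(j)}}e^{s\Phi_j}\big(c^{-(n-2)/4}a_j\big)$; its dominant terms — the second-order term $\Delta(c^{-(n-2)/4}a_j)$ and the potential term $q^{(j)}c^{-(n-2)/4}a_j$, both carrying the distributional $\div X^{(j)}$ — lie in a Sobolev space of order $-1/3$, which the estimate turns into $\|r_j\|_{H^1_{\mathrm{scl}}}=o(h^{1/2})$, the strength the inverse problem requires. The Sobolev exponent $2/3$ is exactly critical: it is the least value for which these dominant terms stay within reach of the Carleman estimate with a gain of two derivatives, and the hypothesis $X^{(j)}\in C^{0,1/3}$ — in particular $X^{(j)}\in L^\infty$, which would fail for $H^{2/3}$ alone when $n\ge3$ — is what lets one close the pointwise and product estimates, for instance after mollifying $X^{(j)}$ at a scale linked to $h$ and balancing the resulting errors.

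Inserting these CGO solutions — with opposite signs of $\varphi$ and an additional factor $e^{i\lambda x_1}$, so as to take the Fourier transform in the Euclidean variable — into the orthogonality identity and letting $h\to0$, the $r_j$-contributions being negligible since $\|r_j\|_{H^1_{\mathrm{scl}}}=o(h^{1/2})$, one obtains, exactly as in the proof of Theorem~\ref{thm_old} and in \cite{DKSaloU_2009}, the vanishing of (attenuated) geodesic ray transforms of the Fourier transforms in $x_1$ of the components of $Y$ and of $q^{(1)}-q^{(2)}$, over all radial geodesics of $\widehat{M}_0$ and all $\lambda\in\R$; varying $\omega$ and invoking the injectivity of the (attenuated) geodesic ray transform on the simple manifold $M_0$ then yields $d(X^{(1)})^{\flat}=d(X^{(2)})^{\flat}$ and $q^{(1)}=q^{(2)}$. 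By the first equality $Y^{\flat}=d\phi$ for a compactly supported $\phi$ with $\phi|_{\pM}=0$, and $\nabla\phi=Y^{\flat}\in C^{0,1/3}$ so $\phi$ is continuous; substituting $X^{(1)}=X^{(2)}+(d\phi)^{\sharp}$ into $q^{(1)}=q^{(2)}$ and putting $w=e^{-\phi/2}$ converts the resulting relation into $L_{X^{(2)}}w=0$ in $\mathcal{D}'(M^0)$ with $w|_{\pM}=1$; by uniqueness for the Dirichlet problem for $L_{X^{(2)}}$ (and since $w\equiv1$ is such a solution) we get $w\equiv1$, hence $\phi\equiv0$ and $X^{(1)}=X^{(2)}$.

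The hard part will be the CGO construction — establishing $\|r_j\|_{H^1_{\mathrm{scl}}}=o(h^{1/2})$ for $X^{(j)}$ only of class $H^{2/3}\cap C^{0,1/3}$. This demands a delicate accounting of derivatives — solving a transport equation with a rough coefficient and then running the $L^2$ Carleman estimate with a gain of two derivatives against a right-hand side involving the distributional $\div X^{(j)}$ — in which the pointwise and product/interpolation estimates close only thanks to the combined Sobolev and H\"older exponents $2/3$ and $1/3$; this is precisely the threshold phenomenon anticipated in Remark~4.
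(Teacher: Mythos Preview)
Your proposal diverges from the paper's proof in structure and contains a genuine gap in the CGO analysis.

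\textbf{The remainder estimate.} You claim $\|r_j\|_{H^1_{\mathrm{scl}}}=o(h^{1/2})$, arguing that since the source involves $\div X^{(j)}\in H^{-1/3}$, the Carleman estimate with a gain of two derivatives delivers this rate. This is not what the machinery gives. Proposition~\ref{prop_cgo-admiss} yields $\|r_j\|_{H^1_{\mathrm{scl}}}=o(h^{s/2})$ for $X\in H^s\cap L^\infty$; with $s=2/3$ this is only $o(h^{1/3})$. The H\"older regularity $C^{0,1/3}$ does \emph{not} enter the CGO construction at all in the paper---it is used elsewhere (see below). Your assertion that $2/3$ is ``exactly critical'' for the CGO remainder misplaces the threshold.

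\textbf{Where the H\"older hypothesis actually matters.} The paper works directly with the advection equation, not through the magnetic reduction (whose potential $q^{(j)}$ would be genuinely distributional, making the orthogonality identity and product estimates delicate in ways you do not address). The argument proceeds in two passes. The first integral identity \eqref{eq_integral_0_proof} yields $d(X^{(1)}-X^{(2)})^\flat=0$, hence $X^{(1)}-X^{(2)}=\nabla\phi$; here $o(h^{1/3})$ remainders suffice since the identity is multiplied by $h$. The second integral identity \eqref{eq_integral_identity_new_pot}, obtained after the gauge transformation \eqref{eq_sec_200_3}, must be taken as $h\to 0$ without an extra factor of $h$. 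The cross terms $\int\langle\nabla\phi,\nabla(\alpha_j r_k)\rangle\,dV$ are $o(h^{3s/2-1})$, which is $o(1)$ precisely when $s\ge 2/3$---this is the true role of the Sobolev exponent. The remaining term $\int\langle\nabla\phi,\nabla(r_1r_2)\rangle\,dV$ is only $o(h^{s-1})$ with $L^\infty$ control and would diverge; the paper tames it by mollifying $\nabla\phi\in C^{0,1/3}$ at scale $\tau=h$ via Proposition~\ref{prop_app_Holder_R_n}, producing $\mathcal{O}(\tau^{1/3})o(h^{s-1})+\mathcal{O}(\tau^{-2/3})o(h^{s})=o(1)$ exactly when the H\"older exponent is $1/3$ and $s=2/3$. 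This matched pair $(2/3,1/3)$ is the genuine threshold structure.

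\textbf{Further points.} Because $\Delta\phi$ is now only a distribution, the paper cannot integrate by parts in \eqref{eq_sec_200_10} as in the $H^1$ case; instead it passes to a distributional pairing and invokes the injectivity result for the attenuated ray transform acting on $\mathcal{E}'(D^0)$ (Proposition~\ref{prop_Rodriguez}), a step your outline omits. Your endgame via $w=e^{-\phi/2}$ and uniqueness for the Dirichlet problem is equivalent in spirit to the paper's maximum-principle argument for \eqref{eq_sec_200_18}, so that part is fine.
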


Let us now proceed to describe the main ideas in the proof of Theorem \ref{thm_main}, the proof of Theorem \ref{thm_main_2} being quite similar. When proving Theorem \ref{thm_main}, to keep track of the regularity needed we let $X \in H^s \cap L^\infty$, $0\le s\le 1$, and  work with the advection diffusion equation directly, rather than reducing it to the magnetic Schr\"odinger equation. A crucial ingredient in the proof  is the construction of complex geometric optics solutions to an equation of the form
\begin{equation}
\label{eq_intro_P}
P_{X,Y, q}u:=-\Delta u +Xu +\div (Y)u +qu=0,
\end{equation}
with $X,Y\in (H^s\cap L^\infty)(M^0, TM^0)$, $0\le s\le 1$, and $q\in L^\infty(M,\C)$.  The form of the equation is general enough to comprise both the operator $L_X$ as well as its adjoint. The main difficulty here is that the operator $P_{X,Y,q}$ has singular coefficients and to overcome this difficulty, similarly to our works \cite{Krup_Uhlmann_magnet},  \cite{Krup_Uhlmann_magnet_manifolds}, we shall rely on a Carleman estimate with a gain of two derivatives, established in \cite{Krup_Uhlmann_magnet_manifolds} on a compact manifold admitting a limiting Carleman weight. When constructing the complex geometric optics solutions we also make use of a smoothing argument, approximating an $H^s\cap L^\infty$ vector field $X$ by a sequence of smooth vector fields $X_{\tau}$ such that 
\[
\|X-X_\tau\|_{L^2(M)}=o(\tau^s), \quad \tau\to 0. 
\]
When $s=1$, in order to obtain an estimate for the remainder in the complex geometric optics solutions that is sufficiently precise for our purposes, we have to exploit the Gagliardo--Nirenberg inequality. Another crucial ingredient in the proof is the boundary reconstruction of $X\in H^1 \cap L^{\infty}$, in the Sobolev trace sense, and to this end we adapt the arguments of \cite{Brown_2001}, see also \cite{Brown_Salo_2006}, combining them with  some results of  \cite{Krup_Uhlmann_magnet_manifolds}.  At the final stages of the proof, we make use of results related to the injectivity of the attenuated ray transform, acting in the space of  compactly supported distributions and forms on a simple manifold, established in \cite{DKSalo_2013},  \cite{Assylbekov_Yang_2017}, and \cite{rodriguez}. 

The plan of the paper is as follows. In Section \ref{sec_Carleman_est} we recall the Carleman estimates with a gain of two derivatives of \cite{Krup_Uhlmann_magnet_manifolds}, and establish solvability results for the operator $P_{X,Y,q}$ given in \eqref{eq_intro_P}.  Complex geometric optics solutions for the equation \eqref{eq_intro_P} are constructed in Section \ref{sec_CGO}, and the proof of Theorem \ref{thm_main} is completed in Section \ref{sec_proof}.  Section \ref{sec_proof_thm_2} describes the modifications in the arguments needed to establish Theorem \ref{thm_main_2}. The boundary determination of the advection term is the subject of Appendix \ref{sec_boundary_rec}, and standard approximation estimates are collected in Appendix~\ref{sec_approx_est}.

\section{Carleman estimates and solvability results}

\label{sec_Carleman_est}

Let us start by recalling the Carleman estimate for the semiclassical Laplacian $-h^2\Delta$, $0<h\le 1$, with a gain of two derivatives, established in \cite{Krup_Uhlmann_magnet_manifolds} in the case of Riemannian manifolds  admitting limiting Carleman weights. This result is an extension of  \cite[Lemma 2.1]{Salo_Tzou_2009} obtained in the Euclidean case.

Let $(M,g)$ be a compact smooth Riemannian manifold with boundary.  Assume that $(M,g)$ is  embedded in a compact smooth  Riemannian  manifold $(N,g)$ without boundary of the same dimension, and let $U$ be open in $N$ such that $M\subset U$.

Let  $\varphi\in C^\infty(U,\R)$ and let us consider the conjugated operator 
\begin{equation}
\label{eq_Car_-2}
P_\varphi=e^{\frac{\varphi}{h}}(-h^2\Delta)e^{-\frac{\varphi}{h}}=-h^2\Delta -|\nabla \varphi|^2+2\langle \nabla \varphi, h\nabla\rangle +h\Delta\varphi,
\end{equation}
with the semiclassical principal symbol 
\begin{equation}
\label{eq_Car_1}
p_\varphi(x,\xi)=|\xi|^2-|d\varphi|^2+2i \langle \xi, d\varphi\rangle\in C^\infty(T^*U).
\end{equation}

Here and in what follows we use $\langle \cdot, \cdot \rangle$ and $|\cdot|$ to denote the Riemannian scalar product and norm both on the tangent and cotangent space.  

Following \cite{Kenig_Sjostrand_Uhlmann},  \cite{DKSaloU_2009}, we say that $\varphi\in C^\infty(U,\R)$ is a limiting Carleman weight for $-h^2\Delta$ on $(U,g)$ if $d\varphi\ne 0$ on $U$, and the Poisson bracket of $\Re p_\varphi$ and $\Im p_\varphi$ satisfies,
\[
\{\Re p_\varphi, \Im p_\varphi\}=0 \quad\text{when}\quad  p_\varphi=0. 
\]
We refer to \cite{DKSaloU_2009} for a characterization of Riemannian manifolds admitting limiting Carleman weights.

In what follows we equip the Sobolev space $H^s(N)$, $s\in\R$, with the natural semiclassical norm
\[
\|u\|_{H^s_{\text{scl}}(N)}=\|(1-h^2\Delta)^{\frac{s}{2}} u\|_{L^2(N)}. 
\]
 Our starting point is the following result  of \cite{Krup_Uhlmann_magnet_manifolds}. 
\begin{prop}
\label{prop_Carleman_est_gain_2}
Let $\varphi$ be a limiting Carleman weight for $-h^2\Delta$ on $(U,g)$ and let $\tilde \varphi=\varphi+\frac{h}{2\varepsilon}\varphi^2$. Then for all $0<h\ll \varepsilon\ll 1$ and $s\in \R$, we have
\begin{equation}
\label{eq_Car_for_laplace}
\frac{h}{\sqrt{\varepsilon}}\|u\|_{H^{s+2}_{\emph{\text{scl}}}(N)}\le C\|e^{\frac{\tilde \varphi }{h}}(-h^2\Delta)e^{-\frac{\tilde \varphi}{h}}  u\|_{H^{s}_{\emph{\text{scl}}}(N)}, \quad C>0,
\end{equation}
for all $u\in C_0^\infty(M^0)$. 
\end{prop}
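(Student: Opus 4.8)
The plan is to derive \eqref{eq_Car_for_laplace} from the standard gain-of-one semiclassical Carleman estimate of \cite{DKSaloU_2009}, combined with semiclassical elliptic regularity — following, in the manifold setting, the scheme of \cite{Salo_Tzou_2009} — and then to pass to general $s\in\R$ by conjugating with $(1-h^2\Delta)^{s/2}$. Throughout one uses that, for $0<h\ll\varepsilon\ll1$, the convexified weight $\tilde\varphi=\varphi+\frac{h}{2\varepsilon}\varphi^2$ stays in a bounded subset of $C^\infty(U)$ (in particular $d\tilde\varphi=(1+O(h/\varepsilon))\,d\varphi$ remains bounded), so that all constants below may be taken uniform in $\varepsilon$. \emph{Step 1 (gain of one derivative).} Although $\tilde\varphi$ is no longer a limiting Carleman weight, the convexity built into it produces, by the argument of \cite{DKSaloU_2009} (a sharp G\aa rding inequality for the conjugated operator $P_{\tilde\varphi}$), the semiclassical Carleman estimate with a gain of one derivative,
\[
\frac{h}{\sqrt\varepsilon}\,\|u\|_{H^1_{\text{scl}}(N)}\le C\,\|P_{\tilde\varphi}u\|_{L^2(N)},\qquad u\in C_0^\infty(\Omega^0),
\]
for $0<h\ll\varepsilon\ll1$, where $\Omega$ is a fixed compact neighbourhood of $M$ with $\Omega\subset U$. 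I would take this estimate as given and use it repeatedly to absorb lower-order errors.

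\emph{Step 2 (from one to two derivatives).} Fix $R$ large and let $\chi(hD)$ be a semiclassical pseudodifferential cutoff whose symbol equals $1$ for $|\xi|\le R$ and is supported in $|\xi|\le2R$, where $\xi$ denotes the semiclassical cotangent variable. On $\{|\xi|\ge R\}$ the symbol $p_{\tilde\varphi}=|\xi|^2-|d\tilde\varphi|^2+2i\langle\xi,d\tilde\varphi\rangle$ satisfies $|p_{\tilde\varphi}|\gtrsim\langle\xi\rangle^2$ once $R$ is large, so $P_{\tilde\varphi}$ is semiclassically elliptic of order $2$ there, and a semiclassical elliptic parametrix gives
\[
\|(1-\chi(hD))u\|_{H^2_{\text{scl}}}\le C\,\|P_{\tilde\varphi}u\|_{L^2}+O(h^\infty)\|u\|_{L^2}.
\]
Since $\chi(hD)u$ is localized in a bounded frequency region, $\|\chi(hD)u\|_{H^2_{\text{scl}}}\le C_R\,\|u\|_{L^2}$. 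Adding these and multiplying by $h/\sqrt\varepsilon\le1$,
\[
\frac{h}{\sqrt\varepsilon}\,\|u\|_{H^2_{\text{scl}}}\le C\,\|P_{\tilde\varphi}u\|_{L^2}+C\frac{h}{\sqrt\varepsilon}\,\|u\|_{H^1_{\text{scl}}}+O(h^\infty)\|u\|_{L^2},
\]
and by Step 1 the last two terms are $\le C\,\|P_{\tilde\varphi}u\|_{L^2}$ (the $O(h^\infty)$ one trivially, for $h$ small). This is \eqref{eq_Car_for_laplace} with $s=0$, for $u\in C_0^\infty(\Omega^0)$.

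\emph{Step 3 (general $s$).} Since $(1-h^2\Delta)^{s/2}$ commutes with $-h^2\Delta$ by functional calculus, $[P_{\tilde\varphi},(1-h^2\Delta)^{s/2}]$ coincides with the commutator of $(1-h^2\Delta)^{s/2}$ with the first-order operator $V=P_{\tilde\varphi}+h^2\Delta$, which by the symbol calculus has the form $hR_s$ with $R_s\colon H^{s+1}_{\text{scl}}\to L^2$ bounded uniformly in $h,\varepsilon$. Given $u\in C_0^\infty(M^0)$, put $w=(1-h^2\Delta)^{s/2}u$; since $(1-h^2\Delta)^{s/2}$ is pseudolocal modulo $O(h^\infty)$, replacing $w$ by $\psi w$ with $\psi\in C_0^\infty(\Omega^0)$, $\psi\equiv1$ near $M$ and $\supp d\psi\cap\supp u=\emptyset$, alters each norm occurring below only by $O(h^\infty)\|u\|_{H^{s+2}_{\text{scl}}}$. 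Applying the $s=0$ estimate of Step 2 to $\psi w$ and using $P_{\tilde\varphi}w=(1-h^2\Delta)^{s/2}P_{\tilde\varphi}u+hR_s u$,
\[
\frac{h}{\sqrt\varepsilon}\,\|u\|_{H^{s+2}_{\text{scl}}}=\frac{h}{\sqrt\varepsilon}\,\|w\|_{H^2_{\text{scl}}}\le C\,\|P_{\tilde\varphi}u\|_{H^s_{\text{scl}}}+Ch\,\|u\|_{H^{s+1}_{\text{scl}}}+O(h^\infty)\|u\|_{H^{s+2}_{\text{scl}}}.
\]
As $Ch\,\|u\|_{H^{s+1}_{\text{scl}}}\le C\sqrt\varepsilon\cdot\frac{h}{\sqrt\varepsilon}\|u\|_{H^{s+2}_{\text{scl}}}$, both the second and the third term on the right can be absorbed into the left-hand side once $\varepsilon$ and $h$ are small, and \eqref{eq_Car_for_laplace} follows.

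\emph{Main obstacle.} The genuine analytic content lies in Step 1 — the gain-of-one semiclassical Carleman estimate on a manifold admitting a limiting Carleman weight, which is precisely the result of \cite{DKSaloU_2009}; once that is granted, the upgrade to a gain of two derivatives is essentially formal. What requires care is the bookkeeping within the semiclassical pseudodifferential calculus on the closed manifold $N$: carrying out the frequency decomposition, making sense of the elliptic parametrix for $P_{\tilde\varphi}$ (which is defined only near $M$), and handling the conjugation by $(1-h^2\Delta)^{s/2}$ — in particular the pseudolocality that permits passing between $C_0^\infty(M^0)$ and functions merely concentrated near $M$ — while checking that every error generated along the way has size either $C\sqrt\varepsilon\cdot\frac{h}{\sqrt\varepsilon}\|u\|_{H^{s+2}_{\text{scl}}}$ or $O(h^\infty)\|u\|_{H^{s+2}_{\text{scl}}}$, hence is absorbed after shrinking $\varepsilon$ and $h$.
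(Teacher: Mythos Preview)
The paper does not actually prove this proposition: it is quoted verbatim as ``the following result of \cite{Krup_Uhlmann_magnet_manifolds}'', which in turn is presented there as the manifold analogue of \cite[Lemma 2.1]{Salo_Tzou_2009}. Your outline --- take the gain-of-one estimate with convexified weight from \cite{DKSaloU_2009}, upgrade to a gain of two via a high/low frequency splitting and semiclassical ellipticity of $P_{\tilde\varphi}$ at large frequencies, then shift in $s$ by conjugating with $(1-h^2\Delta)^{s/2}$ --- is precisely the scheme of \cite{Salo_Tzou_2009} transported to a closed ambient manifold, i.e.\ exactly what \cite{Krup_Uhlmann_magnet_manifolds} does. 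So there is no discrepancy in approach to report; you have reconstructed the intended argument.

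The one point that deserves a bit more care than you give it is Step~3. The operator $(1-h^2\Delta)^{s/2}$ is a genuine semiclassical $\Psi$DO on $N$, not a pseudodifferential operator with compactly supported Schwartz kernel, so $w=(1-h^2\Delta)^{s/2}u$ is not compactly supported and a cutoff $\psi$ does have to be inserted before one may apply the $s=0$ estimate. Your pseudolocality remark is the right idea, but one must also check that the commutator $[P_{\tilde\varphi},\psi]$ acting on $w$ (not just on $u$) produces an $O(h^\infty)$ error; this uses that $\psi\equiv1$ on a neighbourhood of $\supp u$ together with the $O(h^\infty)$-smoothing of $(1-\psi)(1-h^2\Delta)^{s/2}$ applied to functions supported where $\psi\equiv1$. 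This is routine but should be said explicitly, since otherwise the absorption argument in your last display is circular.
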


We shall next state a Carleman estimate for a suitable first order perturbation of $-h^2\Delta$ which is needed in the proof of Theorem \ref{thm_main}.  Let $Y\in L^\infty(M,TM)$. Then  $\div(Y)\in H^{-1}(M^0) $ is given by 
\begin{equation}
\label{eq_int_2_dis}
\langle \div(Y), \varphi \rangle_{M^0}:=- \int Y(\varphi) dV,\quad \varphi\in C^\infty_0(M^0),
\end{equation}
where $\langle \cdot, \cdot\rangle_{M^0}$ is the distributional duality on $M^0$. We shall also view $\div (Y)$ as a multiplication operator,
\[
\div (Y): C^\infty_0(M^0)\to H^{-1}(M^0). 
\]

\begin{prop}
\label{prop_Carleman_for_convection}
Let $X, Y\in L^\infty(M,TM)$ be complex vector fields, and  let $q\in L^\infty(M, \C)$. Set 
\[
P_{X,Y,q}=-\Delta +X +\div(Y)+q: C^\infty_0(M^0)\to H^{-1}(N).
\]
Let $\varphi$ be a limiting Carleman weight for $-h^2\Delta$ on $(U,g)$. Then for all $0<h\ll 1$, we have
\begin{equation}
\label{eq_Car_for_convection}
h \|u\|_{H^{1}_{\emph{\text{scl}}}(N)}\le C\|e^{\frac{ \varphi }{h}}(h^2 P_{X,Y,q})e^{-\frac{\varphi}{h}}  u\|_{H^{-1}_{\emph{\text{scl}}}(N)}, \quad C>0,
\end{equation}
for all $u\in C_0^\infty(M^0)$. 
\end{prop}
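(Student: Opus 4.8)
The strategy is to treat the first-order and zeroth-order terms $X + \div(Y) + q$ as a perturbation of $-h^2\Delta$ and absorb them using the two-derivative gain in Proposition \ref{prop_Carleman_est_gain_2}. First I would replace $\varphi$ by the convexified weight $\tilde\varphi = \varphi + \frac{h}{2\varepsilon}\varphi^2$ and fix $\varepsilon$ to be a small but fixed constant (independent of $h$); since $\tilde\varphi - \varphi = O(h)$ uniformly, the multipliers $e^{\pm(\tilde\varphi - \varphi)/h}$ are bounded in $C^\infty$ with all derivatives bounded, so conjugating by $e^{\pm\varphi/h}$ versus $e^{\pm\tilde\varphi/h}$ changes norms only by a constant factor and it suffices to prove \eqref{eq_Car_for_convection} with $\tilde\varphi$ in place of $\varphi$. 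Writing $v = e^{\tilde\varphi/h} u$, Proposition \ref{prop_Carleman_est_gain_2} with $s=-1$ gives
\[
\frac{h}{\sqrt\varepsilon}\|u\|_{H^{1}_{\text{scl}}(N)} \le C\| e^{\frac{\tilde\varphi}{h}}(-h^2\Delta)e^{-\frac{\tilde\varphi}{h}} u\|_{H^{-1}_{\text{scl}}(N)}.
\]
Then I would write $e^{\tilde\varphi/h}(h^2 P_{X,Y,q})e^{-\tilde\varphi/h} = e^{\tilde\varphi/h}(-h^2\Delta)e^{-\tilde\varphi/h} + h^2 e^{\tilde\varphi/h}\big(X + \div(Y) + q\big)e^{-\tilde\varphi/h}$, and bound the second term in $H^{-1}_{\text{scl}}(N)$.

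The key estimates are the following semiclassical mapping bounds for the perturbation, acting on $u \in C_0^\infty(M^0)$ (extended by zero to $N$), with all constants depending only on the $L^\infty$ norms of $X,Y,q$ and on $\tilde\varphi$ but not on $h$:
\[
\|h^2 X u\|_{H^{-1}_{\text{scl}}(N)} \le \|h^2 X u\|_{L^2(N)} \le Ch^2\|u\|_{L^2(N)} \le Ch\,\|u\|_{H^1_{\text{scl}}(N)},
\]
\[
\|h^2 q u\|_{H^{-1}_{\text{scl}}(N)} \le Ch^2\|u\|_{L^2(N)} \le Ch\,\|u\|_{H^1_{\text{scl}}(N)},
\]
and, using the definition \eqref{eq_int_2_dis} of $\div(Y)$ as a map into $H^{-1}$ together with the fact that multiplication by the bounded function $Y^j$ has semiclassical $H^0 \to H^0$ norm $O(1)$ and $h\nabla$ has semiclassical $H^1 \to H^0$ norm $O(1)$,
\[
\|h^2 \div(Y) u\|_{H^{-1}_{\text{scl}}(N)} \le Ch\,\|h\nabla u + (\text{l.o.t.})\,u\|_{L^2(N)} \le Ch\,\|u\|_{H^1_{\text{scl}}(N)}.
\]
For the conjugated operators I would use that $e^{\tilde\varphi/h} X e^{-\tilde\varphi/h} = X$ (multiplication commutes with multiplication) and that $e^{\tilde\varphi/h}\div(Y)e^{-\tilde\varphi/h}$ differs from $\div(Y)$, as a map into $H^{-1}$, by $\frac{1}{h}$ times a bounded multiplication operator (from the chain rule $e^{\tilde\varphi/h}\,Y(\varphi)\,e^{-\tilde\varphi/h}$ picking up $\langle Y, \nabla\tilde\varphi\rangle/h$); multiplying by $h^2$ this still contributes only $O(h)\|u\|_{H^1_{\text{scl}}(N)}$. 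Collecting these, the perturbation term is bounded by $C_0 h \|u\|_{H^1_{\text{scl}}(N)}$ for a fixed constant $C_0$. Hence
\[
\frac{h}{\sqrt\varepsilon}\|u\|_{H^1_{\text{scl}}(N)} \le C\|e^{\frac{\tilde\varphi}{h}}(h^2 P_{X,Y,q})e^{-\frac{\tilde\varphi}{h}} u\|_{H^{-1}_{\text{scl}}(N)} + CC_0 h\|u\|_{H^1_{\text{scl}}(N)}.
\]
Since $\varepsilon$ is a fixed small constant with $\frac{1}{\sqrt\varepsilon} > 2CC_0$ (shrink $\varepsilon$ if necessary), the last term is absorbed into the left-hand side, yielding $h\|u\|_{H^1_{\text{scl}}(N)} \le C'\|e^{\tilde\varphi/h}(h^2 P_{X,Y,q})e^{-\tilde\varphi/h} u\|_{H^{-1}_{\text{scl}}(N)}$, and undoing the $\tilde\varphi \leftrightarrow \varphi$ reduction gives \eqref{eq_Car_for_convection}.

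The main obstacle — and the reason the two-derivative gain of Proposition \ref{prop_Carleman_est_gain_2} is essential rather than the classical $O(h^{-1})$ Carleman estimate — is controlling $\div(Y)$, which is only an $H^1 \to H^{-1}$ operator after pairing; with the standard estimate one gains only one derivative, producing a term $\sim \|u\|_{L^2}$ on the right that cannot be absorbed, whereas here the estimate is measured in $H^1_{\text{scl}}$ on the left and $H^{-1}_{\text{scl}}$ on the right with a full $h$ (rather than $h^0$) in front, which is exactly what makes all three perturbation terms carry a genuine power $h^1$ and thus absorbable for small fixed $\varepsilon$. One technical point to be careful about is that Proposition \ref{prop_Carleman_est_gain_2} is stated for $u \in C_0^\infty(M^0)$ and the conjugated Laplacian maps into $H^s_{\text{scl}}(N)$; I would check that extension by zero from $M^0$ to $N$ is compatible with the $H^{-1}_{\text{scl}}(N)$ duality used here, which is routine since $C_0^\infty(M^0) \subset C_0^\infty(U) \subset C^\infty(N)$ and the relevant norms are the ambient ones.
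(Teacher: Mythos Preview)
Your overall strategy matches the paper's proof exactly: apply Proposition~\ref{prop_Carleman_est_gain_2} with $s=-1$ and the convexified weight $\tilde\varphi$, bound the conjugated perturbation $h^2 e^{\tilde\varphi/h}(X+\div(Y)+q)e^{-\tilde\varphi/h}$ by $O(h)\|u\|_{H^1_{\text{scl}}}$ in $H^{-1}_{\text{scl}}$, absorb it by choosing $\varepsilon$ small but fixed, and then pass from $\tilde\varphi$ to $\varphi$. The final bounds you claim for each of the three perturbation terms are correct.

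However, you have swapped the roles of $X$ and $\div(Y)$ under conjugation. Here $X$ is a vector field acting as a first-order differential operator, $Xu=X^j\partial_j u$, not a multiplication operator; your line ``$e^{\tilde\varphi/h} X e^{-\tilde\varphi/h} = X$ (multiplication commutes with multiplication)'' is false, and the intermediate step $\|h^2 Xu\|_{L^2}\le Ch^2\|u\|_{L^2}$ is not valid. The correct computation, as in the paper, is
\[
e^{\tilde\varphi/h}\,h^2X\,e^{-\tilde\varphi/h}\,u=h^2X(u)-h\Big(1+\tfrac{h}{\varepsilon}\varphi\Big)X(\varphi)\,u,
\]
and then $\|h^2X(u)\|_{L^2}\le Ch\|h\nabla u\|_{L^2}\le Ch\|u\|_{H^1_{\text{scl}}}$ together with the fact that the second term is an $O(h)$ multiplication operator gives the desired $O(h)\|u\|_{H^1_{\text{scl}}}$. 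Conversely, $\div(Y)$ \emph{is} a multiplication operator (by a distribution in $H^{-1}$), so it commutes with the conjugation and there is no extra $\langle Y,\nabla\tilde\varphi\rangle/h$ term; the paper simply bounds $\|h^2\div(Y)u\|_{H^{-1}_{\text{scl}}}$ directly by duality via $|\langle h^2\div(Y)u,\psi\rangle|\le h^2\int|Y(u\psi)|\,dV\le O(h)\|u\|_{H^1_{\text{scl}}}\|\psi\|_{H^1_{\text{scl}}}$. Once you correct this mix-up, your argument coincides with the paper's.
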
 
\begin{proof}
Let $\tilde \varphi=\varphi+\frac{h}{2\varepsilon}\varphi^2$ with $\varepsilon>0$ be such that $0<h\ll \varepsilon\ll 1$.  We have
\[
e^{\frac{\tilde \varphi }{h}} (h^2 X +h^2\div(Y)+h^2q)  e^{-\frac{\tilde \varphi}{h}} =h^2 X-h\bigg(1+\frac{h}{\varepsilon}\varphi \bigg)X(\varphi)+ h^2\div(Y)+h^2q.
\]
Therefore, for $u\in C^\infty_0(M^0)$, we get 
\begin{equation}
\label{eq_Car_for_convection_1}
\| e^{\frac{\tilde \varphi }{h}} h^2 X (e^{-\frac{\tilde \varphi}{h}} u) \|_{H^{-1}_{\text{scl}}(N)}\le \|  h^2 X(u)-h\bigg(1+\frac{h}{\varepsilon}\varphi \bigg)X(\varphi)  u \|_{L^2(N)}\le \mathcal{O}(h)\|u\|_{H^1_{\text{scl}}(N)}.
\end{equation}

In order to estimate $\|h^2\div(Y)u \|_{H^{-1}_{\text{scl}}(N)}$,  we shall use the following characterization of the semiclassical norm in the Sobolev space $H^{-1}(N)$, 
\[
\|v\|_{H^{-1}_{\text{scl}}(N)}=\sup_{0\ne \psi\in C^\infty(N)}\frac{|\langle v,\psi\rangle_N|}{\| \psi\|_{H^1_{\text{scl}}(N)}}. 
\]
Using \eqref{eq_int_2_dis}, for $0\ne \psi\in C^\infty(N)$, we get 
\[
|\langle h^2\div(Y)u,\psi\rangle_N|\le \int_N h^2|Y(u\psi)|dV\le \mathcal{O}(h)\|u\|_{H^{1}_{\text{scl}}(N)}\|\psi\|_{H^{1}_{\text{scl}}(N)},
\]
and therefore, 
\begin{equation}
\label{eq_Car_for_convection_2}
\|h^2\div(Y)u \|_{H^{-1}_{\text{scl}}(N)}\le \mathcal{O}(h)\|u\|_{H^{1}_{\text{scl}}(N)}.
\end{equation}
Finally, we have
\begin{equation}
\label{eq_Carleman_convection_pot}
\|h^2qu\|_{H^{-1}_{\text{scl}}(N)}\le \mathcal{O}(h^2)\|u\|_{H^{1}_{\text{scl}}(N)}.
\end{equation}

Hence, choosing $\varepsilon>0$ sufficiently small but fixed, i.e. independent of $h$, we obtain from \eqref{eq_Car_for_laplace} with $s=-1$ and \eqref{eq_Car_for_convection_1}, \eqref{eq_Car_for_convection_2}, and \eqref{eq_Carleman_convection_pot} that for all $h>0$ small enough and $u\in C_0^\infty(M^0)$, 
\[
\|e^{\frac{\tilde \varphi }{h}}(h^2P_{X,Y, q})e^{-\frac{\tilde \varphi}{h}}  u\|_{H^{-1}_{\text{scl}}(N)}\ge \frac{h}{C}\|u\|_{H^{1}_{\text{scl}}(N)},
\]
which implies \eqref{eq_Car_for_convection}.
\end{proof}

Notice that the formal $L^2$ adjoint of $P_{X,Y,q}$ is given by $P_{-\overline{X}, -\overline{X}+\overline{Y},\overline{q}}$. Using the fact that  if $\varphi$ is a limiting Carleman weight then so is $-\varphi$, we obtain the following solvability result, see  \cite{DKSaloU_2009} and \cite{Krup_Uhlmann_magnet} for the details. 
\begin{prop}
\label{prop_solvability}
Let $X, Y\in L^\infty(M,TM)$ be complex vector fields, and  let $q\in L^\infty(M, \C)$. Let  $\varphi$ be a limiting Carleman weight for $-h^2\Delta$ on $(U,g)$. 
If  $h>0$ is small enough, then for any $v\in H^{-1}(M^0)$, there is a solution $u\in H^1(M^0)$ of the equation 
\[
e^{\frac{\varphi}{h}}(h^2P_{X,Y,q})e^{-\frac{\varphi}{h}}  u=v \quad \text{in}\quad M^0,
\]
which satisfies 
\[
\|u\|_{H^1_{\emph{\text{scl}}}(M^0)}\le \frac{C}{h} \|v\|_{H^{-1}_{\emph{\text{scl}}}(M^0)}.
\]
\end{prop}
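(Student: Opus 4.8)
The plan is to derive Proposition~\ref{prop_solvability} from the Carleman estimate of Proposition~\ref{prop_Carleman_for_convection} by the by-now standard duality (Hahn--Banach) argument, exactly as in \cite{DKSaloU_2009} and \cite{Krup_Uhlmann_magnet}; so I will only indicate the structure. Write $L:=e^{\varphi/h}(h^2P_{X,Y,q})e^{-\varphi/h}$. The first point is that the Carleman estimate applies to the \emph{adjoint} problem. On the one hand, $-\varphi$ is again a limiting Carleman weight for $-h^2\Delta$ on $(U,g)$, since $p_{-\varphi}=\overline{p_\varphi}$, so that $\{p_{-\varphi}=0\}=\{p_\varphi=0\}$ and $\{\Re p_{-\varphi},\Im p_{-\varphi}\}=-\{\Re p_\varphi,\Im p_\varphi\}$ vanishes on this set. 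On the other hand, the formal $L^2$ adjoint $P_{X,Y,q}^{*}=P_{-\overline X,\,-\overline X+\overline Y,\,\overline q}$ is once more of the form $P_{X',Y',q'}$ with $X',Y'\in L^\infty(M,TM)$ and $q'\in L^\infty(M,\C)$, and a short integration by parts (the conjugating exponentials being smooth) identifies the formal $L^2$ adjoint of $L$ as $L^{*}=e^{-\varphi/h}(h^2P_{X,Y,q}^{*})e^{\varphi/h}$. Hence Proposition~\ref{prop_Carleman_for_convection}, applied to $P_{X,Y,q}^{*}$ with the weight $-\varphi$, yields
\[
h\,\|w\|_{H^1_{\text{scl}}(N)}\le C\,\|L^{*}w\|_{H^{-1}_{\text{scl}}(N)},\qquad w\in C_0^\infty(M^0);
\]
in particular $w\mapsto L^{*}w$ is injective on $C_0^\infty(M^0)$.

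Given $v\in H^{-1}(M^0)$, I would then define a linear functional on the subspace $\{L^{*}w:\ w\in C_0^\infty(M^0)\}\subset H^{-1}_{\text{scl}}(N)$ by $L^{*}w\mapsto\langle w,v\rangle$, using the pairing of $H^1_0(M^0)$ with $H^{-1}(M^0)$ and extending $w$ by zero to $N$. It is well defined by the injectivity above and, since $\|w\|_{H^1_{\text{scl}}(N)}=\|w\|_{H^1_{\text{scl}}(M^0)}$, the Carleman estimate shows it is bounded with norm $\le\tfrac{C}{h}\|v\|_{H^{-1}_{\text{scl}}(M^0)}$. Extending it by Hahn--Banach to a functional on all of $H^{-1}_{\text{scl}}(N)$ with the same bound, and representing it through the duality $\big(H^{-1}_{\text{scl}}(N)\big)^{*}\cong H^1_{\text{scl}}(N)$, I obtain $u\in H^1_{\text{scl}}(N)$ with $\|u\|_{H^1_{\text{scl}}(N)}\le\tfrac{C}{h}\|v\|_{H^{-1}_{\text{scl}}(M^0)}$ and $\langle L^{*}w,u\rangle=\langle w,v\rangle$ for all $w\in C_0^\infty(M^0)$. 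Since $L$ maps $H^1(M^0)$ continuously into $H^{-1}(M^0)$ (the coefficients $X,Y$ lie in $L^\infty$ and $\div(Y)$ acts via \eqref{eq_int_2_dis}, so that by Sobolev embedding $\div(Y)f\in H^{-1}(M^0)$ for $f\in H^1(M^0)$) and $L^{*}w$ is compactly supported in $M^0$, integration by parts gives $\langle L^{*}w,u\rangle=\langle w,Lu\rangle$, whence $Lu=v$ in $\mathcal D'(M^0)$. Restricting $u$ to $M^0$, together with $\|u\|_{H^1_{\text{scl}}(M^0)}\le\|u\|_{H^1_{\text{scl}}(N)}$, completes the argument.

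I do not anticipate any genuine obstacle here, since this is the familiar passage from a Carleman estimate to a solvability statement. The only points requiring attention are of a bookkeeping nature: verifying that $-\varphi$ remains a limiting Carleman weight and that $P_{X,Y,q}^{*}$ stays within the scope of Proposition~\ref{prop_Carleman_for_convection}; matching the semiclassical $H^{\pm1}$ norms on $N$ with those on $M^0$ for functions, respectively distributions, supported in $M^0$; and keeping the complex-conjugation conventions in the Hilbert-space duality consistent throughout.
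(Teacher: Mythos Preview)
Your proposal is correct and follows exactly the approach the paper indicates: the paper does not spell out a proof but simply observes that $P_{X,Y,q}^{*}=P_{-\overline X,\,-\overline X+\overline Y,\,\overline q}$ and that $-\varphi$ is again a limiting Carleman weight, then refers to \cite{DKSaloU_2009} and \cite{Krup_Uhlmann_magnet} for the standard Hahn--Banach duality argument, which is precisely what you have written out.
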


\section{Complex geometric optics solutions}

\label{sec_CGO}

Let $(M,g)$ be an admissible manifold.  
 Then  $(M,g)$ is isometrically  embedded in $(\R\times M_0, c(e\oplus g_0))$ for some compact simple $(n-1)$--dimensional manifold $(M_0,g_0)$ and some $0<c\in C^\infty(\R\times M_0)$. Assume, 
after replacing $M_0$ by a slightly larger simple manifold if needed, that for some simple manifold $(D,g_0)\subset\subset (M_0^0,g_0)$ one has  
\begin{equation}
\label{eq_cgo_mnfl_0_1}
(M,g)\subset\subset (\R\times D^0, c(e\oplus g_0))\subset\subset (\R\times M_0^0, c(e\oplus g_0)). 
\end{equation}

Let $X\in (H^s\cap L^\infty)(M^0,TM^0)$,  $0\le s\le 1$. Arguing as in \cite[Section 2.2]{Krup_Uhlmann_calderon}, we shall extend $X$ to  a compactly supported vector field in $(H^s\cap L^\infty)(\R\times M_0^0, T(\R\times M_0^0))$ and we shall denote the extension by the same letter.  
Using a partition of unity argument together with a regularization in each coordinate patch, we get the following result in view of Proposition \ref{prop_app_R_n}. 

\begin{prop}
\label{prop_approximation}
Let $X\in H^s(\R\times M_0^0, T(\R\times M_0^0))$, $0\le s\le 1$. There exists a family  $X_\tau\in C^\infty_0(\R\times M_0^0, T(\R\times M_0^0))$, $\tau>0$,  such that 
\begin{equation}
\label{eq_Car_20_0}
\|X-X_\tau\|_{L^2}=o(\tau^s),\quad \tau\to 0,
\end{equation}
and 
\begin{equation}
\label{eq_Car_20}
\begin{aligned}
\|X_\tau\|_{L^2}=\mathcal{O}(1), \quad   \|\nabla X_\tau\|_{L^2}=\begin{cases}o(\tau^{s-1}), & 0\le s<1,\\
\mathcal{O}(1), & s=1,
\end{cases}
\quad
\|\nabla^2 X_\tau\|_{L^2}=o(\tau^{s-2}).
\end{aligned}
\end{equation}
If furthermore, $X\in L^\infty(\R\times M_0, T(\R\times M_0))$, we have
\begin{equation}
\label{eq_Car_20_L_infty}
\begin{aligned}
\|X_\tau\|_{L^\infty}=\mathcal{O}(1), \quad
\|\nabla X_\tau\|_{L^\infty}=\mathcal{O}(\tau^{-1}),\quad  \tau\to 0.  
\end{aligned}
\end{equation}
\end{prop}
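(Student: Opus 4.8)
The plan is to deduce the statement on $\R\times M_0^0$ from its Euclidean counterpart, Proposition~\ref{prop_app_R_n}, by a partition of unity. Since $X$ is compactly supported, I would first fix a finite family of coordinate charts $(U_j,\kappa_j)$, $j=1,\dots,N$, with $\kappa_j\colon U_j\to\widetilde U_j\subset\R^n$ diffeomorphisms, covering a compact neighborhood $K$ of $\supp X$, together with functions $\chi_j\in C_0^\infty(U_j)$ such that $\sum_j\chi_j\equiv 1$ on $K$. Then $X=\sum_{j=1}^N\chi_j X$, and each summand $\chi_j X$ is a vector field supported in a fixed compact subset of $U_j$; read off in the coordinates $\kappa_j$, its components form a compactly supported element of $(H^s\cap L^\infty)(\R^n;\R^n)$ for $0\le s\le 1$ (the $L^\infty$ bound being available precisely when $X\in L^\infty$), because multiplication by a fixed $C_0^\infty$ function and composition with a fixed diffeomorphism preserve both $H^s$, $0\le s\le 1$, and $L^\infty$.

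Next I would apply Proposition~\ref{prop_app_R_n} to each of these Euclidean vector fields to obtain smooth regularizations $Z_{j,\tau}$; here one uses that convolving a fixed compactly supported function against a mollifier of scale $\tau$ leaves the support inside $\widetilde U_j$ for all sufficiently small $\tau$, so that $Z_{j,\tau}\in C_0^\infty(\widetilde U_j;\R^n)$. Pulling $Z_{j,\tau}$ back to $U_j$ by $\kappa_j^{-1}$ and extending by zero across $\R\times M_0^0$, I set $X_\tau:=\sum_{j=1}^N\kappa_j^{*}Z_{j,\tau}\in C_0^\infty(\R\times M_0^0,\,T(\R\times M_0^0))$, which is well defined and compactly supported for $\tau$ small.

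The estimates then transfer one chart at a time. Since each $\kappa_j$ and $\chi_j$ is smooth and all the sets in play are compact, the chain rule shows that for $m=0,1,2$ the norm $\|\kappa_j^{*}Z_{j,\tau}\|_{H^m(\R\times M_0^0)}$ is bounded by a constant — depending only on $\kappa_j,\chi_j$ and not on $\tau$ — times $\|Z_{j,\tau}\|_{L^2(\R^n)}+\dots+\|\nabla^m Z_{j,\tau}\|_{L^2(\R^n)}$, and likewise $\|\kappa_j^{*}Z_{j,\tau}\|_{L^\infty}\lesssim\|Z_{j,\tau}\|_{L^\infty}$, while $\|\chi_j X-\kappa_j^{*}Z_{j,\tau}\|_{L^2(\R\times M_0^0)}$ is comparable to the $L^2(\R^n)$ norm of the difference of the Euclidean representatives. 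Feeding in the Euclidean bounds \eqref{eq_Car_20_0}--\eqref{eq_Car_20_L_infty} and noting that, for $0\le s\le 1$, both $\tau^{0}$ and $\tau^{s-1}$ are $o(\tau^{s-2})$ while $\tau^{0}$ is $o(\tau^{s-1})$ when $s<1$, the lower-order contributions are absorbed; summing over the finitely many $j$ (a finite sum of terms that are $o(\tau^{a})$, resp. $\mathcal{O}(\tau^{a})$, being again $o(\tau^{a})$, resp. $\mathcal{O}(\tau^{a})$) yields \eqref{eq_Car_20_0}, \eqref{eq_Car_20}, and \eqref{eq_Car_20_L_infty} for $X_\tau$.

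The genuine content is Proposition~\ref{prop_app_R_n}, proved by setting $X_\tau=X*\psi_\tau$, $\psi_\tau(x)=\tau^{-n}\psi(x/\tau)$, for a fixed mollifier $\psi$. The $L^2$-type bounds follow from Plancherel, the identity $\widehat{X_\tau}(\xi)=\hat X(\xi)\hat\psi(\tau\xi)$, and dominated convergence: for instance $\tau^{-2s}\|X-X_\tau\|_{L^2}^{2}=\int|\hat X(\xi)|^{2}\,\tau^{-2s}|1-\hat\psi(\tau\xi)|^{2}\,d\xi$ has integrand dominated by $C|\xi|^{2s}|\hat X(\xi)|^{2}\in L^1$ and tending to $0$ pointwise, and for $k=1,2$ one has $\tau^{2(k-s)}\|\nabla^{k}X_\tau\|_{L^2}^{2}=\int(\tau|\xi|)^{2(k-s)}|\xi|^{2s}|\hat X(\xi)|^{2}|\hat\psi(\tau\xi)|^{2}\,d\xi\to 0$ in the same way, using the boundedness of $\eta\mapsto|\eta|^{2(k-s)}|\hat\psi(\eta)|^{2}$ from the Schwartz decay of $\hat\psi$ — except when $s=1,k=1$, where one simply has $\|\nabla X_\tau\|_{L^2}\le\|\nabla X\|_{L^2}=\mathcal{O}(1)$ since $|\hat\psi|\le\|\psi\|_{L^1}=1$. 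The $L^\infty$ bounds are immediate from $\|X*\nabla^{\alpha}\psi_\tau\|_{L^\infty}\le\|X\|_{L^\infty}\|\nabla^{\alpha}\psi_\tau\|_{L^1}=\mathcal{O}(\tau^{-|\alpha|})$. I expect the only delicate point to be the borderline $s=1$ in \eqref{eq_Car_20_0}, where upgrading $\mathcal{O}(\tau)$ to $o(\tau)$ forces the choice of an even $\psi$, so that $\hat\psi(\eta)=1+\mathcal{O}(|\eta|^{2})$ near the origin; the partition-of-unity bookkeeping above is otherwise routine.
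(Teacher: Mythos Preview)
Your proposal is correct and follows exactly the approach the paper itself takes: the paper's proof of this proposition is a single sentence invoking a partition of unity together with regularization in each coordinate patch via Proposition~\ref{prop_app_R_n}, and your sketch of that Euclidean proposition (mollification, Plancherel, dominated convergence, and the need for a symmetric mollifier so that $\hat\psi(\eta)=1+\mathcal{O}(|\eta|^2)$ in the borderline case $s=1$) matches the argument in Appendix~\ref{sec_approx_est} essentially line by line.
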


We have global coordinates $ x=(x_1,x')$ on $\R\times M_0$ in which the metric $g$ has the form
\begin{equation}
\label{eq_cgo_metric_0}
g(x)=c(x)\begin{pmatrix}
1& 0\\
0 & g_0(x')
\end{pmatrix},
\end{equation} 
where $c>0$ and $g_0$ is a simple metric on $M_0$. Then the globally defined function 
\begin{equation}
\label{eq_cgo_metric_0_1}
\varphi(x)=x_1
\end{equation}
 on $M$
is a limiting Carleman weight, see \cite{DKSaloU_2009}.

Let $X, Y\in (H^s\cap L^\infty)(M^0, TM^0))$,  $0\le s\le 1$, be complex vector fields and let $q\in L^\infty(M, \C)$. 
We shall construct solutions to the equation
\begin{equation}
\label{eq_cgo_1}
P_{X,Y,q}u=-\Delta u+Xu +\div(Y)u+qu=0\quad\text{in}\quad M^0
\end{equation}
 of the form
\begin{equation}
\label{eq_cgo_2}
u=e^{-\frac{\rho}{h}}(a+r_0),
\end{equation}
where $\rho=\varphi+i\psi$ is the complex phase with $\varphi$ is given by \eqref{eq_cgo_metric_0_1},  $\psi\in C^\infty(M,\R)$, $a\in C^\infty(M,\C)$ is an amplitude, obtained by a WKB construction, and $r_0$ is a remainder term.  A direct computation shows that  
\[
e^{\frac{\rho}{h}} \circ (-h^2\Delta) \circ e^{-\frac{\rho}{h}}=-h^2\Delta +h\Delta \rho+2h \nabla\rho -|\nabla \rho|^2,
\]
where $\nabla\rho$ is a complex vector field, and $|\nabla \rho|^2=\langle \nabla \rho,\nabla \rho\rangle $ is computed using the bilinear extension of the Riemannian scalar product to the complexified tangent bundle.  
Furthermore, 
\[
e^{\frac{\rho}{h}} h^2 (X+ \div(Y)+q) e^{-\frac{\rho}{h}}= h^2X-hX(\rho) +h^2\div(Y)+h^2q,
\]
and therefore, 
\begin{align*}
e^{\frac{\rho}{h}} h^2P_{X,Y,q} (e^{-\frac{\rho}{h}}u)=&-h^2\Delta u  +h(\Delta \rho) u+2h \nabla\rho(u) -|\nabla \rho|^2 u +h^2X(u)-hX(\rho)u\\
&+ h^2\div(Y)u+h^2qu.
\end{align*}
In order for \eqref{eq_cgo_2} to be a solution to \eqref{eq_cgo_1}, following the WKB method, we require that $\rho$ satisfies the eikonal equation,  
\begin{equation}
\label{eq_cgo_3}
|\nabla \rho|^2=0,
\end{equation}
and the amplitude $a$ satisfies  the regularized transport equation,
\begin{equation}
\label{eq_cgo_4}
-X_\tau(\rho)a  +2 \nabla\rho (a)+  (\Delta \rho) a=0. 
\end{equation}
The remainder term $r_0$ will be then determined by solving the equation, 
\begin{equation}
\label{eq_cgo_5}
e^{\frac{\rho}{h}} h^2P_{X,Y,q} (e^{-\frac{\rho}{h}}r_0)=-( -h^2\Delta a+ h^2X(a)-h(X-X_\tau)(\rho)a + h^2\div(Y)a+h^2qa). 
\end{equation}

Recalling the definition of  $\varphi$ given in \eqref{eq_cgo_metric_0_1}, we see that \eqref{eq_cgo_3} becomes a pair of equations for $\psi$,
\begin{equation}
\label{eq_cgo_6}
|\nabla \psi|^2=|\nabla \varphi|^2, \quad \langle \nabla \varphi, \nabla \psi\rangle =0. 
\end{equation}
By  \eqref{eq_cgo_metric_0} and \eqref{eq_cgo_metric_0_1}, we get 
\begin{equation}
\label{eq_cgo_7}
\nabla \varphi =\frac{1}{c}\p_{x_1}, \quad |\nabla \varphi|^2=\frac{1}{c}. 
\end{equation}
We conclude from \eqref{eq_cgo_6} and \eqref{eq_cgo_7} that 
\begin{equation}
\label{eq_cgo_8}
|\nabla \psi|^2=\frac{1}{c}, \quad \p_{x_1}\psi=0. 
\end{equation}

Let $\omega\in D$ be a point such that $(x_1,\omega)\notin M$ for all $x_1\in \R$.  We have global coordinates on $M$ given by $x=(x_1,r,\theta)$, where $(r,\theta)$ are the polar normal coordinates in $(D,g_0)$ with center $\omega$, i.e. $x'=\exp_\omega^D(r\theta)$ where $r>0$ and $\theta\in \mathbb{S}^{n-2}$. Since $D$ is simple, the exponential map  $\exp_\omega^D$ takes its maximal domain in $T_\omega D$ diffeomorphically onto $D$.  It follows from \eqref{eq_cgo_metric_0} that  in these coordinates  the metric $g$ has the form,
\begin{equation}
\label{eq_cgo_8_metrix}
g(x_1,r,\theta)=c(x_1,r,\theta) \begin{pmatrix}
1& 0& 0\\
0& 1& 0\\
0& 0& m(r,\theta)
\end{pmatrix},
\end{equation}
where $m$ is a smooth positive definite $(n-2)\times (n-2)$ matrix.  The eikonal equation \eqref{eq_cgo_8} has therefore a global $C^\infty$ solution 
\[
\psi(x)=\psi_\omega(x)=r, 
\]
and we get 
\[
\rho=x_1+i r, \quad  \nabla \rho=\frac{2}{c}\overline{\p},
 \]
where 
\[
\overline{\p}=\frac{1}{2}(\p_{x_1}+i\p_{r}). 
\]
We have
\[
\Delta \rho=|g|^{-1/2}\p_{x_1} \bigg(\frac{|g|^{1/2}}{c}\bigg)+i |g|^{-1/2}\p_{r} \bigg(\frac{|g|^{1/2}}{c}\bigg)=\frac{1}{c}\overline{\p} \log\bigg(\frac{|g|}{c^2}\bigg).
\]
Writing $X_\tau=(X_\tau)_1\p_{x_1}+(X_\tau)_r\p_r+(X_\tau)_\theta\p_\theta$, we see that the transport equation \eqref{eq_cgo_4} has the form,
\begin{equation}
\label{eq_cgo_9}
4\overline{\p }a + \bigg(\overline{\p} \log\bigg(\frac{|g|}{c^2}\bigg)\bigg) a=  ((X_\tau)_1+i (X_\tau)_r)ca. 
\end{equation}

Following \cite{DKSaloU_2009}, we choose a solution $a$ of \eqref{eq_cgo_9} in the form, 
\begin{equation}
\label{eq_cgo_10_ampl}
a=|g|^{-1/4} c^{1/2} e^{\Phi_\tau} a_0(x_1,r)b(\theta),
\end{equation}
where $\Phi_\tau$ solves the equation 
\begin{equation}
\label{eq_cgo_10}
\overline{\p}\Phi_\tau =\frac{1}{4}((X_\tau)_1 +i (X_\tau)_r)c,
\end{equation}
$a_0$ is  a non-vanishing holomorphic function, 
\[
\overline{\p}a_0=0,
\]
and $b(\theta)$ is smooth.  The inhomogeneous $\overline{\p} $ equation \eqref{eq_cgo_10} is given in the global coordinates $(x_1,r)$, and  we can take
\begin{equation}
\label{eq_cgo_11}
\Phi_\tau(x_1,r,\theta) = \frac{1}{4} \frac{1}{\pi (x_1+i r)} * ((X_\tau)_1 +i (X_\tau)_r) c,
\end{equation}
with $*$ denoting the convolution in the variables $(x_1,r)$ and $X_\tau(\cdot,\cdot,\theta)$ being viewed as a compactly supported smooth vector field in the complex $x_1+i r$ plane. 

By \eqref{eq_cgo_11} and \eqref{eq_Car_20},  we get  
\begin{equation}
\label{eq_cgo_12}
\begin{aligned}
&\|\Phi_\tau\|_{L^2(M)}=\mathcal{O}(1), \ \|\nabla \Phi_\tau\|_{L^2(M)}=\begin{cases}o(\tau^{s-1}), & 0\le s<1,\\
\mathcal{O}(1), & s=1,
\end{cases}\\
 &\|\Delta \Phi_\tau\|_{L^2(M)}=o(\tau^{s-2}),
\end{aligned}
\end{equation}
as $\tau\to 0$. 
Also by \eqref{eq_Car_20_L_infty} we have
\begin{equation}
\label{eq_cgo_12_L_infty}
\begin{aligned}
\|\Phi_\tau\|_{L^\infty(M)}=\mathcal{O}(1), \quad
\|\nabla \Phi_\tau\|_{L^\infty(M)}=\mathcal{O}(\tau^{-1}),\quad \tau\to 0.  
\end{aligned}
\end{equation}
Setting 
\[
\Phi(x_1,r,\theta) =  \frac{1}{4} \frac{1}{\pi (x_1+i r)} * (X_1 +i X_r) c \in  L^\infty(M), 
\]
using Young's inequality, \eqref{eq_cgo_11} and \eqref{eq_Car_20_0}, we get 
\begin{equation}
\label{eq_cgo_13}
\|\Phi-\Phi_\tau\|_{L^2(M)}=o(\tau^s), \quad \tau\to 0. 
\end{equation}

It follows from \eqref{eq_cgo_10_ampl}, \eqref{eq_cgo_12} and \eqref{eq_cgo_12_L_infty} that 
\begin{equation}
\label{eq_cgo_12_ampl_estimates}
\begin{aligned}
&\|a\|_{L^2(M)}=\mathcal{O}(1), \quad  \|\nabla a\|_{L^2(M)}=\begin{cases}o(\tau^{s-1}), & 0\le s<1,\\
\mathcal{O}(1), & s=1,
\end{cases},\\
& \|\Delta a\|_{L^2(M)}=o(\tau^{s-2}),\ 0\le s\le 1,\\
&\|a\|_{L^\infty(M)}=\mathcal{O}(1), \quad \|\nabla a\|_{L^\infty(M)}=\mathcal{O}(\tau^{-1}),
\end{aligned}
\end{equation}
as $\tau\to 0$.  The only estimate that should be explained in detail  is the following one, 
\begin{equation}
\label{eq_cgo_12_ampl_estimates_proff}
 \|\Delta a\|_{L^2(M)}=o(\tau^{-1}),  
 \end{equation}
 when $s=1$. 
Indeed, it follows from  \eqref{eq_cgo_10_ampl} that 
\[
a=fe^{\Phi_\tau},\quad f=|g|^{-1/4}c^{1/2}a_0b\in C^\infty(M),
\]
and therefore, 
\[
\Delta a=f e^{\Phi_\tau} \langle \nabla \Phi_\tau,\nabla\Phi_\tau\rangle+ f e^{\Phi_\tau}\Delta \Phi_\tau+ 2\langle \nabla f, e^{\Phi_\tau} \nabla \Phi_\tau\rangle+ e^{\Phi_\tau}\Delta f. 
\]
Hence, using the Gagliardo--Nirenberg inequality,
\[
\|\nabla u\|_{L^4(M)}\le C\|u\|_{L^\infty(M)}^{1/2}\|u\|_{H^2(M^0)}^{1/2},
\]
valid for $u\in (L^\infty\cap H^2)(M^0)$, see \cite[page 101]{Alinhac_Gerard}, \cite[p. 313]{Brezis_book}, and \eqref{eq_cgo_12}, \eqref{eq_cgo_12_L_infty}, we get 
\[
\|\Delta a\|_{L^2(M)}\le \mathcal{O}(1)(\|\nabla \Phi_\tau\|^2_{L^4(M)}+o(\tau^{-1})+1)=o(\tau^{-1}),  
\]
showing \eqref{eq_cgo_12_ampl_estimates_proff}.

Now let us solve the equation \eqref{eq_cgo_5} for the remainder term $r_0$. Consider the right hand side of  \eqref{eq_cgo_5},
\begin{equation}
\label{eq_cgo_14_v}
v=-( -h^2\Delta a+ h^2X(a)-h(X-X_\tau)(\rho)a + h^2\div(Y)a+h^2qa)\in H^{-1}(M^0).  
\end{equation}
First, using \eqref{eq_cgo_12_ampl_estimates} and \eqref{eq_Car_20_0},  we have
\begin{equation}
\label{eq_cgo_14_v_first}
\begin{aligned}
&\|h^2\Delta a\|_{H_{\text{scl}}^{-1}(M^0)}\le h^2\|\Delta a\|_{L^2(M)}\le h^2o(\tau^{s-2}), \\
& \|h^2X(a)\|_{H_{\text{scl}}^{-1}(M^0)}\le h^2\|X(a)\|_{L^2(M)}\le \begin{cases}h^2o(\tau^{s-1}), & 0\le s<1,\\
h^2\mathcal{O}(1), & s=1,
\end{cases},\\
&\|h(X-X_\tau)(\rho)a\|_{H_{\text{scl}}^{-1}(M^0)}\le h\|(X-X_\tau)(\rho)a\|_{L^2(M)}\le ho(\tau^s),\\
&\|h^2 qa\|_{H_{\text{scl}}^{-1}(M^0)}\le h^2\|qa\|_{L^2(M)}\le \mathcal{O}(h^2). 
\end{aligned}
\end{equation}
Letting  $0\ne \phi\in C^\infty_0(M^0)$, and using \eqref{eq_cgo_12_ampl_estimates} and Proposition \ref{prop_approximation}, we obtain that 
\begin{equation}
\label{eq_cgo_14_v_second}
\begin{aligned}
|\langle h^2\div(Y)a, &\phi \rangle_{M^0}| \le h^2 |\langle \div(Y_\tau)a, \phi \rangle_{M^0}|+ h^2\int_{M^0}  |(Y-Y_\tau)(a \phi)| dV\\
&\le \mathcal{O}(h^2) \| \div(Y_\tau)\|_{L^2(M)} \|a\|_{L^\infty(M)}\|\phi\|_{L^2(M)}\\
&+ \mathcal{O}(h^2)  \|Y-Y_\tau\|_{L^2(M)} (\|\nabla a\|_{L^\infty(M)}\|\phi\|_{L^2}+ \|a\|_{L^\infty(M)}\|\nabla \phi\|_{L^2(M)})\\
&\le  \begin{cases}(h^2o(\tau^{s-1})  +ho(\tau^s))\|\phi\|_{H^1_{\text{scl}}(M^0)}, & 0\le s<1,\\
(h^2\mathcal{O}(1) + h^2o(1)+ho(\tau)) \|\phi\|_{H^1_{\text{scl}}(M^0)}, & s=1.
\end{cases}
\end{aligned}
\end{equation}
Combining \eqref{eq_cgo_14_v}, \eqref{eq_cgo_14_v_first}, and \eqref{eq_cgo_14_v_second}, we get 
\[
\|v\|_{H^{-1}_{\text{scl}}(M^0)}\le h^2o(\tau^{s-2})  +ho(\tau^s), \quad  0\le s\le 1. 
\]
Hence, choosing $\tau=h^{1/2}$, we see that $\|v\|_{H^{-1}_{\text{scl}}(M^0)}=o(h^{1+s/2})$. Thus,  
by Proposition \ref{prop_solvability}, for all $h>0$ small enough, there exists a solution $r_0\in H^1(M^0)$ of    \eqref{eq_cgo_5} which satisfies $\|r_0\|_{H^{1}_{\text{scl}}(M^0)}=o(h^{s/2})$ as $h\to 0$. 

The discussion above can be summarized in the following proposition.
\begin{prop}
\label{prop_cgo-admiss}
Assume that $(M,g)$ satisfies \eqref{eq_cgo_mnfl_0_1} and \eqref{eq_cgo_metric_0}, and let $X, Y\in(H^s\cap L^\infty)(M^0, TM^0)$, $0\le s\le 1$, be complex vector fields and $q\in L^\infty(M, \C)$.  Let $\omega\in D$ be such that $(x_1,\omega)\notin M$ for all $x_1$, and let $(r,\theta)$ be the polar normal coordinates in $(D,g_0)$ with center $\omega$. Then for all $h>0$ small enough, there exists a solution $u\in H^1(M^0)$ to the equation 
\[
P_{X,Y,q} u=0\quad \text{in}\quad \mathcal{D}'(M^0)
\]
of the form
\[
u=e^{-\frac{1}{h}(x_1+ir )}(a +r_0 ), \quad a=|g|^{-1/4} c^{1/2} e^{\Phi_h} a_0(x_1,r)b(\theta),
\]
where $a_0$ is  a non-vanishing holomorphic function, $(\p_{x_1}+i\p_r)a_0=0$, and $b(\theta)$ is smooth. The function
$\Phi_h\in C^\infty(M)$ satisfies  
\begin{equation}
\label{eq_cgo_19}
\begin{aligned}
&\|\Phi_h\|_{L^2(M)}=\mathcal{O}(1), \ \|\nabla \Phi_h\|_{L^2(M)}=\begin{cases}o(h^{\frac{s-1}{2}}), & 0\le s<1,\\
\mathcal{O}(1), & s=1,
\end{cases}\\
 &\|\Delta \Phi_h\|_{L^2(M)}=o(h^{\frac{s-2}{2}}),
\end{aligned}
\end{equation}
and
\begin{equation}
\label{eq_cgo_19_L_infty}
\begin{aligned}
\|\Phi_h\|_{L^\infty(M)}=\mathcal{O}(1), \quad
\|\nabla \Phi_h\|_{L^\infty(M)}=\mathcal{O}(h^{-\frac{1}{2}}),
\end{aligned}
\end{equation}
and    
\begin{equation}
\label{eq_cgo_20}
\|\Phi-\Phi_h\|_{L^2(M)}=o(h^{\frac{s}{2}}), \quad h\to 0, 
\end{equation}
where 
\[
\Phi(x_1,r,\theta) = \frac{1}{4} \frac{1}{\pi (x_1+i r)} * (X_1 +i X_r)c,
\]
where $X=X_1 \p_{x_1}+X_r\p_r+X_\theta \p_\theta$.  Furthermore, $a$ satisfies 
\begin{equation}
\label{eq_cgo_19_ampl_estimates}
\begin{aligned}
&\|a\|_{L^2(M)}=\mathcal{O}(1), \quad  \|\nabla a\|_{L^2(M)}=\begin{cases}o(h^{\frac{s-1}{2}}), & 0\le s<1,\\
\mathcal{O}(1), & s=1,
\end{cases},\ \|\Delta a\|_{L^2(M)}=o(h^{\frac{s-2}{2}}),\\
&\|a\|_{L^\infty(M)}=\mathcal{O}(1), \quad \|\nabla a\|_{L^\infty(M)}=\mathcal{O}(h^{-\frac{1}{2}}).
\end{aligned}
\end{equation}
The remainder $r_0$ is such that $\|r_0\|_{H^{1}_{\emph{\text{scl}}}(M^0)}=o(h^{\frac{s}{2}})$ as $h\to 0$. 
\end{prop}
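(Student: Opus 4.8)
The plan is to prove the proposition by carrying out a WKB construction for $P_{X,Y,q}u=0$ in the admissible geometry, while keeping careful track of how every error depends on the regularization parameter $\tau$ and the semiclassical parameter $h$. First I would substitute the ansatz $u=e^{-\rho/h}(a+r_0)$ with $\rho=\varphi+i\psi=x_1+i\psi$ into $e^{\rho/h}h^2P_{X,Y,q}e^{-\rho/h}$ and split the resulting expression into an eikonal part $-|\nabla\rho|^2$, a transport part, and a remainder source $v$. Imposing the eikonal equation $|\nabla\rho|^2=0$ reduces, via the warped product form \eqref{eq_cgo_metric_0} of $g$ and the choice $\varphi=x_1$ of limiting Carleman weight, to $|\nabla\psi|^2=1/c$ and $\partial_{x_1}\psi=0$; picking $\omega\in D$ whose vertical line misses $M$ and using the polar normal coordinates $(r,\theta)$ of $(D,g_0)$ centered at $\omega$, the function $\psi=r$ is a global smooth solution, so $\rho=x_1+ir$ and $\nabla\rho=\frac{2}{c}\overline{\partial}$.

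Next I would solve the \emph{regularized} transport equation \eqref{eq_cgo_4}, in which the singular field $X$ is replaced by the smooth approximants $X_\tau$ from Proposition \ref{prop_approximation}. In the $(r,\theta)$ coordinates this becomes the inhomogeneous $\overline{\partial}$-equation \eqref{eq_cgo_9}, which I solve explicitly by the ansatz $a=|g|^{-1/4}c^{1/2}e^{\Phi_\tau}a_0(x_1,r)b(\theta)$ with $a_0$ holomorphic, $b$ smooth, and $\Phi_\tau$ the two-dimensional Cauchy transform \eqref{eq_cgo_11} of $\frac{1}{4}((X_\tau)_1+i(X_\tau)_r)c$ in the $(x_1,r)$-plane. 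Young's inequality applied to this Cauchy transform, together with the approximation bounds \eqref{eq_Car_20_0}, \eqref{eq_Car_20}, \eqref{eq_Car_20_L_infty}, yields the estimates \eqref{eq_cgo_12}, \eqref{eq_cgo_12_L_infty}, \eqref{eq_cgo_13}; multiplying by the smooth factor $|g|^{-1/4}c^{1/2}a_0b$ and using the product rule then gives all the amplitude estimates \eqref{eq_cgo_12_ampl_estimates}.

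I expect the borderline case $s=1$ of the bound $\|\Delta a\|_{L^2(M)}=o(\tau^{-1})$ to be the main obstacle. The difficulty is the term $e^{\Phi_\tau}\langle\nabla\Phi_\tau,\nabla\Phi_\tau\rangle$ in $\Delta a$, for which the crude estimate $\|\nabla\Phi_\tau\|_{L^\infty(M)}^2=\mathcal{O}(\tau^{-2})$ is too weak; instead I would invoke the Gagliardo--Nirenberg inequality $\|\nabla u\|_{L^4(M)}\le C\|u\|_{L^\infty(M)}^{1/2}\|u\|_{H^2(M^0)}^{1/2}$ applied to $u=\Phi_\tau$, which combined with \eqref{eq_cgo_12} and \eqref{eq_cgo_12_L_infty} gives $\|\nabla\Phi_\tau\|_{L^4(M)}^2=o(\tau^{-1})$ and hence the claim. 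This is precisely the point at which $s=1$ appears as the largest exponent the method can reach.

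Finally I would estimate the source $v$ of \eqref{eq_cgo_5} in the semiclassical norm $H^{-1}_{\mathrm{scl}}(M^0)$. The terms $h^2\Delta a$, $h^2X(a)$, $h(X-X_\tau)(\rho)a$, $h^2qa$ are bounded directly through their $L^2$ norms using \eqref{eq_cgo_12_ampl_estimates} and \eqref{eq_Car_20_0}; the distributional term $h^2\div(Y)a$ is the only genuinely singular one, and is handled by pairing with $\phi\in C_0^\infty(M^0)$, splitting $Y=Y_\tau+(Y-Y_\tau)$, integrating by parts, and using $\|\div(Y_\tau)\|_{L^2(M)}=o(\tau^{s-1})$ (or $\mathcal{O}(1)$ when $s=1$) together with $\|Y-Y_\tau\|_{L^2(M)}=o(\tau^s)$. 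Adding up yields $\|v\|_{H^{-1}_{\mathrm{scl}}(M^0)}\le h^2o(\tau^{s-2})+h\,o(\tau^s)$; balancing the two contributions by the choice $\tau=h^{1/2}$ gives $\|v\|_{H^{-1}_{\mathrm{scl}}(M^0)}=o(h^{1+s/2})$. Proposition \ref{prop_solvability} then furnishes $r_0\in H^1(M^0)$ solving \eqref{eq_cgo_5} with $\|r_0\|_{H^1_{\mathrm{scl}}(M^0)}\le (C/h)\|v\|_{H^{-1}_{\mathrm{scl}}(M^0)}=o(h^{s/2})$, and relabelling $\Phi_\tau=\Phi_h$ (with $a$ and $r_0$ likewise indexed by $h$) translates \eqref{eq_cgo_12}, \eqref{eq_cgo_12_L_infty}, \eqref{eq_cgo_13}, \eqref{eq_cgo_12_ampl_estimates} into the stated bounds \eqref{eq_cgo_19}, \eqref{eq_cgo_19_L_infty}, \eqref{eq_cgo_20}, \eqref{eq_cgo_19_ampl_estimates}, which would complete the proof.
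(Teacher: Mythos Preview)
Your proposal is correct and follows essentially the same approach as the paper: the WKB ansatz with $\rho=x_1+ir$, the regularized transport equation solved via the Cauchy transform, the Gagliardo--Nirenberg inequality to handle $\|\Delta a\|_{L^2}$ when $s=1$, the splitting $Y=Y_\tau+(Y-Y_\tau)$ for the $\div(Y)a$ term, and the balancing choice $\tau=h^{1/2}$ all match the paper's argument step for step.
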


\section{Proof of Theorem \ref{thm_main}}

\label{sec_proof}

Let $X,Y\in L^\infty(M, TM)$ be real vector fields  and let $q\in L^\infty(M, \R)$.  Let $u\in H^1(M^0)$ be a solution to 
\begin{equation}
\label{eq_int_4_advec_poten}
(-\Delta + X +\div(Y) + q) u=0 \quad \text{in}\quad \mathcal{D}'(M^0).   
\end{equation}
We shall define the trace of the advection normal derivative $(\p_{\nu}-\langle Y, \nu\rangle )u\in H^{-1/2}(\p M)$ as follows. Let $\psi\in H^{1/2}(\p M)$. Then letting $v\in H^1(M^0)$ be a continuous extension of $\psi$, we set 
\begin{equation}
\label{eq_int_5_advec_poten}
\begin{aligned}
\langle& (\p_\nu-\langle Y, \nu\rangle ) u, \psi \rangle_{H^{-1/2}(\p M)\times H^{1/2}(\p M)}\\
&=\int_M\langle \nabla u, \nabla v\rangle dV +\int_M X(u) vdV
-\int_{M} Y(uv)dV +\int_{M} q uvdV.
\end{aligned}
\end{equation}
As $u$ satisfies \eqref{eq_int_4_advec_poten}, the above definition of the trace $(\p_\nu-\langle Y, \nu\rangle)u\in H^{-1/2}(\p M)$  is independent of the choice of an extension of $\psi$.

Consider $L_X=-\Delta+X$. Associated to $L_X$ is the formal $L^2$ adjoint $L_X^*$ defined by
\[
(L_X u, v)_{L^2(M)}=\langle u, L_X^*\overline{v}\rangle_{H^1_0(M^0), H^{-1}(M^0)},\quad u,v\in C^\infty_0(M^0)
\]
so that 
\[
L_X^*=-\Delta-X-\div(X): C_0^\infty(M^0)\to H^{-1}(M^0). 
\] 
In particular, when  $u\in H^1(M^0)$ is a solution to
\begin{equation}
\label{eq_int_4}
L^*_{X}u=0 \quad \text{in}\quad \mathcal{D}'(M^0),
\end{equation}
using \eqref{eq_int_5_advec_poten}, we get 
\begin{equation}
\label{eq_int_5}
\langle (\p_\nu+\langle X, \nu\rangle ) u, \varphi \rangle_{H^{-1/2}(\p M)\times H^{1/2}(\p M)}=\int_M\langle \nabla u, \nabla v\rangle dV +\int_M u X(v)dV.
\end{equation}

Our starting point is the following integral identity.  
\begin{prop}
\label{prop_integral_identity}
Let $X^{(1)}, X^{(2)}\in L^\infty(M,TM)$ be real  vector fields. If $\Lambda_{X^{(1)}}=\Lambda_{X^{(2)}}$ then 
\begin{equation}
\label{eq_integral_0}
\int_M (X^{(1)}-X^{(2)})(u_1) u_2dV=0,
\end{equation}
for any $u_1, u_2\in H^1(M^0)$ satisfying  $(-\Delta+X^{(1)})u_1=0$ and $(-\Delta -X^{(2)}-\div (X^{(2)}))u_2=0$  in $\mathcal{D}'(M^0)$. 
\end{prop}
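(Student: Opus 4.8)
The plan is to derive the integral identity \eqref{eq_integral_0} from the standard ``Alessandrini-type'' argument adapted to the advection diffusion setting, using the hypothesis $\Lambda_{X^{(1)}}=\Lambda_{X^{(2)}}$ together with the already established boundary determination (Corollary~\ref{cor_boundary_rec}) that gives $X^{(1)}=X^{(2)}$ on $\p M$. First I would fix $f\in H^{1/2}(\p M)$ and let $u_1\in H^1(M^0)$ be the solution of $(-\Delta+X^{(1)})u_1=0$ with $u_1|_{\p M}=f$. Let $\tilde u_2\in H^1(M^0)$ solve $(-\Delta+X^{(2)})\tilde u_2=0$ with $\tilde u_2|_{\p M}=f$ as well, so that $w:=u_1-\tilde u_2\in H^1_0(M^0)$ and, since the two Dirichlet--to--Neumann maps agree, $\p_\nu u_1=\p_\nu\tilde u_2$ on $\p M$ in $H^{-1/2}(\p M)$.

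Next I would pair the equation for $w$ against an arbitrary solution $u_2\in H^1(M^0)$ of the adjoint equation $L_{X^{(2)}}^*u_2=(-\Delta-X^{(2)}-\div(X^{(2)}))u_2=0$. Using the weak formulation \eqref{eq_int_5} of the trace $(\p_\nu+\langle X^{(2)},\nu\rangle)u_2$, together with the definition \eqref{eq_int_2_2} of $\p_\nu u_1$ and $\p_\nu\tilde u_2$, I compute
\begin{equation*}
\langle(\p_\nu+\langle X^{(2)},\nu\rangle)u_2,\,w|_{\p M}\rangle_{H^{-1/2}(\p M)\times H^{1/2}(\p M)}=\int_M\langle\nabla w,\nabla u_2\rangle\,dV+\int_M w\,X^{(2)}(u_2)\,dV.
\end{equation*}
Since $w|_{\p M}=0$, the left-hand side vanishes. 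On the other hand, computing the same right-hand side by writing $w=u_1-\tilde u_2$ and testing each of the equations $(-\Delta+X^{(j)})=0$ against $u_2$ (legitimate after noting $u_2$ can be used as a test element in $H^1$, using that $\p_\nu u_1=\p_\nu\tilde u_2$ on $\p M$ to cancel the boundary contributions), I obtain
\begin{equation*}
0=\int_M\bigl(X^{(2)}(u_1)-X^{(1)}(u_1)\bigr)u_2\,dV+\int_M\bigl(X^{(1)}(\tilde u_2)-X^{(2)}(\tilde u_2)\bigr)u_2\,dV.
\end{equation*}
The second integral vanishes: $\tilde u_2$ solves $(-\Delta+X^{(2)})\tilde u_2=0$, and $u_2$ solves the adjoint equation with the same potential $X^{(2)}$, so integration by parts (using $\tilde u_2|_{\p M}=u_1|_{\p M}$ and matching normal derivatives, plus $X^{(1)}=X^{(2)}$ on $\p M$) shows $\int_M(X^{(1)}-X^{(2)})(\tilde u_2)u_2\,dV$ reduces to a boundary term that is zero. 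Thus $\int_M(X^{(1)}-X^{(2)})(u_1)u_2\,dV=0$ for this particular $u_1$ with $u_1|_{\p M}=f$; since $f\in H^{1/2}(\p M)$ is arbitrary and every $H^1$ solution of $(-\Delta+X^{(1)})u_1=0$ arises this way (by well-posedness of the Dirichlet problem, \cite[Chapter 3, Section 8.2]{Aubin_book}), the identity holds for all admissible $u_1,u_2$.

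The main obstacle is bookkeeping the boundary terms with the correct low-regularity traces: each $X^{(j)}$ is only $L^\infty$, the normal derivatives live in $H^{-1/2}(\p M)$, and one must be careful that all the ``integration by parts'' steps are really applications of the weak definitions \eqref{eq_int_2_2}, \eqref{eq_int_5_advec_poten}, \eqref{eq_int_5} rather than genuine Green's formulas. The key enabling facts are (i) $w\in H^1_0(M^0)$, which kills the left-hand boundary pairings, and (ii) the equality of Cauchy data $(u_1|_{\p M},\p_\nu u_1|_{\p M})=(\tilde u_2|_{\p M},\p_\nu\tilde u_2|_{\p M})$, which is exactly what $\Lambda_{X^{(1)}}=\Lambda_{X^{(2)}}$ furnishes once we also know the vector fields agree on $\p M$. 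Everything else is a routine manipulation of the bilinear forms.
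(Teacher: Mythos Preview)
Your overall strategy---introduce an auxiliary solution $\tilde u_2$ of $(-\Delta+X^{(2)})\tilde u_2=0$ with the same Cauchy data as $u_1$ and pair against $u_2$---is exactly the paper's approach. However, your execution contains an error that leads you into an unnecessary and ultimately invalid detour.

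First, the weak formulation \eqref{eq_int_5} reads $\langle(\p_\nu+\langle X^{(2)},\nu\rangle)u_2,\varphi\rangle=\int_M\langle\nabla u_2,\nabla v\rangle\,dV+\int_M u_2\,X^{(2)}(v)\,dV$, so with $v=w$ you get $\int_M u_2\,X^{(2)}(w)\,dV$, not $\int_M w\,X^{(2)}(u_2)\,dV$. Using the correct form, combine this (which vanishes since $w|_{\p M}=0$) with the identity obtained from $\p_\nu u_1=\p_\nu\tilde u_2$ via \eqref{eq_int_2_2}: both produce the same $\int_M X^{(2)}(\tilde u_2)\,u_2\,dV$ term, which cancels. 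What remains is precisely $\int_M(X^{(1)}-X^{(2)})(u_1)\,u_2\,dV=0$. There is no ``second integral'' involving $(X^{(1)}-X^{(2)})(\tilde u_2)$; that term is an artifact of miscomputation.

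Second, and more seriously, your appeal to Corollary~\ref{cor_boundary_rec} to get $X^{(1)}=X^{(2)}$ on $\p M$ is not justified: that corollary requires $X^{(j)}\in C(M,TM)$, whereas the proposition assumes only $X^{(j)}\in L^\infty$. The paper's proof (and the corrected version of yours) needs no boundary determination at all---the key observation is that the same bilinear form $\int_M\langle\nabla v_2,\nabla u_2\rangle+\int_M X^{(2)}(v_2)\,u_2$ computes both $\langle\p_\nu v_2,u_2|_{\p M}\rangle$ and $\langle(\p_\nu+\langle X^{(2)},\nu\rangle)u_2,v_2|_{\p M}\rangle$, so one can swap the roles and then replace the extension $v_2$ of $u_1|_{\p M}$ by $u_1$ itself.
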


\begin{proof}
As $\Lambda_{X^{(1)}}=\Lambda_{X^{(2)}}$, there is a solution $v_2\in H^1(M^0)$ to  
\begin{equation}
\label{eq_integral_1_-1}
(-\Delta+X^{(2)})v_2=0\quad \text{in}\quad  \mathcal{D}'(M^0)
\end{equation}
 such that 
\begin{equation}
\label{eq_integral_1_0}
u_1|_{\p M}=v_2|_{\p M}, \quad \p_{\nu }u_1|_{\p M}=\p_{\nu}v_2|_{\p M}. 
\end{equation}
Hence, \eqref{eq_integral_1_0} implies that 
\begin{equation}
\label{eq_integral_1}
\langle \p_\nu u_1, u_2\rangle_{H^{-1/2}(\p M)\times H^{1/2}(\p M)}=\langle \p_\nu v_2, u_2\rangle_{H^{-1/2}(\p M)\times H^{1/2}(\p M)}
\end{equation}
Using \eqref{eq_integral_1_-1}, the fact that $(-\Delta_g -X^{(2)}-\div (X^{(2)}))u_2=0$ and \eqref{eq_int_5},  we get 
\begin{equation}
\label{eq_integral_2}
\begin{aligned}
\langle \p_\nu v_2, u_2\rangle_{H^{-1/2}(\p M)\times H^{1/2}(\p M)}&=\int_M \langle \nabla v_2, \nabla u_2\rangle dV + \int_{M} X^{(2)}(v_2) u_2 dV\\
&=\langle (\p_\nu+ \langle X^{(2)}, \nu\rangle   ) u_2, v_2\rangle_{H^{-1/2}(\p M)\times H^{1/2}(\p M)}\\
&=\langle (\p_\nu+\langle X^{(2)}, \nu\rangle ) u_2, u_1\rangle_{H^{-1/2}(\p M)\times H^{1/2}(\p M)}\\
&=\int_M \langle \nabla u_2, \nabla u_1\rangle dV + \int_{M} X^{(2)}(u_1) u_2 dV.
\end{aligned}
\end{equation}
On the other hand, using the equation for $u_1$, we obtain that 
\begin{equation}
\label{eq_integral_3}
\langle \p_\nu u_1, u_2\rangle_{H^{-1/2}(\p M)\times H^{1/2}(\p M)}=\int_M \langle \nabla u_1, \nabla u_2\rangle dV + \int_{M} X^{(1)}(u_1) u_2 dV.
\end{equation}
The claim follows from \eqref{eq_integral_1}, \eqref{eq_integral_2} and \eqref{eq_integral_3}. 
\end{proof}

We shall need the following result the proof of which is similar to \cite[Proposition 3.4]{Krup_Uhlmann_magnet}.
\begin{lem}
\label{lem_Dirichlet_on_bigger}
Let $(M,g)$ and $(\tilde M,g)$ be smooth compact Riemannian manifolds with smooth boundaries such that $M\subset \tilde M^0$, and let   $X^{(j)}\in (H^{s}\cap L^\infty)(\tilde M^0,T\tilde M^0)$, $0\le s\le 1$, $j=1,2$, and assume that 
\[
X^{(1)}=X^{(2)}\quad \text{in}\quad \tilde M\setminus M.
\]
If $\Lambda_{X^{(1)}}=\Lambda_{X^{(2)}}$ then $\Lambda'_{X^{(1)}}=\Lambda'_{X^{(2)}}$, where $\Lambda'_{X^{(j)}}$ is the Dirichlet--to--Neumann map for the operator $L_{X^{(j)}}$ on $\tilde M$, $j=1,2$.  
\end{lem}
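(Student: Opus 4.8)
The plan is to exploit the standard \emph{extension-of-the-data} argument: we want to transfer equality of the Dirichlet--to--Neumann maps from the smaller manifold $M$ to the larger one $\tilde M$, using that the vector fields agree on the collar $\tilde M\setminus M$. First I would set up the comparison. Fix $f\in H^{1/2}(\p\tilde M)$ and let $\tilde u_j\in H^1(\tilde M^0)$ be the solution of $L_{X^{(j)}}\tilde u_j=0$ in $\tilde M^0$ with $\tilde u_j|_{\p\tilde M}=f$; these exist and are unique by the well-posedness of the Dirichlet problem quoted after \eqref{eq_int_2_Dirichlet}, now applied on $\tilde M$. The goal is to show $\p_\nu\tilde u_1|_{\p\tilde M}=\p_\nu\tilde u_2|_{\p\tilde M}$ in $H^{-1/2}(\p\tilde M)$, which is exactly $\Lambda'_{X^{(1)}}f=\Lambda'_{X^{(2)}}f$.

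The key step is to build, from $\tilde u_1$, a solution of the $X^{(2)}$-equation on $\tilde M$ with the same Cauchy data on $\p\tilde M$. Consider $w:=\tilde u_1-\tilde u_2$ restricted to the collar $W:=\tilde M\setminus M$. On $W$ the coefficients coincide, $X^{(1)}=X^{(2)}=:X$, so $L_X w = L_{X^{(1)}}\tilde u_1-L_{X^{(2)}}\tilde u_2=0$ in $W^0$ (more precisely on the open collar between $\p\tilde M$ and $\p M$), and $w$ has vanishing Dirichlet data on $\p\tilde M$. Here is where the hypothesis $\Lambda_{X^{(1)}}=\Lambda_{X^{(2)}}$ enters: letting $g_0:=\tilde u_1|_{\p M}\in H^{1/2}(\p M)$, the solution $u_2$ on $M$ of $L_{X^{(2)}}u_2=0$, $u_2|_{\p M}=g_0$, satisfies $\p_\nu u_2|_{\p M}=\Lambda_{X^{(2)}}g_0=\Lambda_{X^{(1)}}g_0=\p_\nu(\tilde u_1|_{M})|_{\p M}$, i.e.\ $u_2$ and $\tilde u_1|_M$ have identical Cauchy data on $\p M$. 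Therefore the function $\tilde v_2$ defined as $\tilde u_1$ on the collar $W$ and as $u_2$ on $M$ is, by the matching of Cauchy data across the interface $\p M$ (a standard gluing lemma: matching traces and conormal derivatives gives a global $H^1$ function that is a distributional solution across $\p M$), a well-defined element of $H^1(\tilde M^0)$ solving $L_{X^{(2)}}\tilde v_2=0$ in $\tilde M^0$ with $\tilde v_2|_{\p\tilde M}=f$. By uniqueness of the Dirichlet problem on $\tilde M$ for $L_{X^{(2)}}$ we conclude $\tilde v_2=\tilde u_2$; but near $\p\tilde M$ we have $\tilde v_2=\tilde u_1$, hence $\p_\nu\tilde u_2|_{\p\tilde M}=\p_\nu\tilde u_1|_{\p\tilde M}$, as desired.

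The delicate point, and the one I would write most carefully, is the \emph{gluing across $\p M$}: namely that matching of Dirichlet traces ($\tilde u_1|_{\p M}=u_2|_{\p M}$) and of weak normal derivatives ($\p_\nu\tilde u_1|_{\p M}=\p_\nu u_2|_{\p M}$ in $H^{-1/2}(\p M)$) is exactly the condition guaranteeing that the piecewise-defined function lies in $H^1(\tilde M^0)$ and satisfies the equation $L_{X^{(2)}}\tilde v_2=0$ in $\mathcal{D}'(\tilde M^0)$ globally, with no singular contribution supported on $\p M$. This is checked by testing against $\psi\in C^\infty_0(\tilde M^0)$: splitting the integral $\int_{\tilde M}(\langle\nabla\tilde v_2,\nabla\psi\rangle+X^{(2)}(\tilde v_2)\psi)\,dV$ into the pieces over $W$ and over $M$, integrating by parts on each piece, the interior terms vanish because $\tilde u_1$ (resp.\ $u_2$) solves the equation there, and the two boundary contributions along $\p M$ cancel precisely because of \eqref{eq_integral_1_0}-type matching of the Cauchy data, where one uses the definition \eqref{eq_int_2_2} of the weak normal derivative on each side (with outer normals pointing in opposite directions). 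The low regularity $X^{(j)}\in H^s\cap L^\infty$ causes no trouble here since all the relevant trace identities are the $H^{\pm1/2}$ dualities already set up in the introduction; the argument is, as noted, parallel to \cite[Proposition 3.4]{Krup_Uhlmann_magnet}, and I would simply indicate the modifications needed to accommodate the first-order term $X$ (which is benign, as it is an $L^\infty$ multiplier composed with a derivative and contributes only bulk terms that are continuous under the gluing).
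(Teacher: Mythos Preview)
Your proposal is correct and follows exactly the standard extension-of-data argument that the paper intends; the paper does not give a proof but only cites \cite[Proposition 3.4]{Krup_Uhlmann_magnet}, which proceeds in precisely the way you describe (solve on $\tilde M$ with $X^{(1)}$, restrict to $\p M$, use $\Lambda_{X^{(1)}}=\Lambda_{X^{(2)}}$ to match Cauchy data, glue the collar piece to the interior $X^{(2)}$-solution, and invoke uniqueness on $\tilde M$). Your discussion of the gluing step---checking via \eqref{eq_int_2_2} that matching $H^{1/2}$ traces and $H^{-1/2}$ normal derivatives across $\p M$ yields a global $H^1$ distributional solution with no interface contribution---is the one point requiring care, and you have identified it correctly; the aside about $w=\tilde u_1-\tilde u_2$ on the collar is not actually needed and can be dropped.
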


Let $X^{(1)}, X^{(2)}\in (H^{s}\cap L^\infty)(M^0,TM^0)$, $0\le s\le 1$. When $s=1$, an application of  Theorem \ref{prop_boundary_rec} gives that 
$X^{(1)}|_{\p M}=X^{(2)}|_{\p M}$ in $H^{1/2}(\p M, TM|_{\p M})$. We know from the beginning of Section \ref{sec_CGO} that  we can extend  $X^{(1)}$, $X^{(2)}$ to compactly supported vector fields in $(H^1\cap L^\infty)(\R\times M_0^0, T(\R\times M^0_0))$ and arguing as in \cite[Section 5.1]{Krup_Uhlmann_calderon}, we may modify $X^{(2)}$ so that the extensions of $X^{(1)}$ and $X^{(2)}$ agree on $\R\times M_0^0\setminus M$, and their regularity is preserved. In view of the future applications to the proof of Theorem \ref{thm_main_2} it will be convenient to continue working with a general $0\le s\le 1$ and to that end, when $0\le s<1$, we assume that we can extend  $X^{(1)}$ and $X^{(2)}$ to compactly supported vector fields in $(H^s\cap L^\infty)(\R\times M_0^0), T(\R\times M^0_0))$ so that $X^{(1)}=X^{(2)}$ outside of $M$.  Applying Lemma \ref{lem_Dirichlet_on_bigger}, we may and will assume therefore that the admissible manifold $(M,g)$ is simply connected with connected boundary and $X^{(1)}$ and $X^{(2)}$ are compactly supported in the interior of $M$. 

As $\Lambda_{X^{(1)}}=\Lambda_{X^{(2)}}$, it follows from Proposition \ref{prop_integral_identity} that 
\begin{equation}
\label{eq_integral_0_proof}
\int_M (X^{(1)}-X^{(2)})(u_1) u_2dV=0,
\end{equation}
for any $u_1, u_2\in H^1(M^0)$ satisfying  
\begin{equation}
\label{sec_200_1}
(-\Delta+X^{(1)})u_1=0
\end{equation}
 and 
 \begin{equation}
\label{sec_200_2}
 (-\Delta -X^{(2)}-\div (X^{(2)}))u_2=0.
 \end{equation}

By Proposition \ref{prop_cgo-admiss} for all $h>0$ small enough, there are  solutions $u_1,u_2\in H^1(M^0)$ to  \eqref{sec_200_1} and \eqref{sec_200_2}, respectively, of the form
\begin{equation}
\label{eq_proof_2}
\begin{aligned}
u_1&=e^{-\frac{\rho}{h}}(|g|^{-1/4} c^{1/2} e^{\Phi^{(1)}_{h}} a_0(x_1,r)b(\theta) +r_1 )=e^{-\frac{\rho}{h}} (\alpha_1+r_1),\\
u_2&=e^{\frac{\rho}{h}}(|g|^{-1/4} c^{1/2} e^{\Phi^{(2)}_{h}}  +r_2)=e^{\frac{\rho}{h}}(\alpha_2  +r_2 ),
\end{aligned}
\end{equation}
respectively. Here $\rho=x_1+ir $, 
\begin{equation}
\label{eq_proof_3}
\alpha_1=|g|^{-1/4} c^{1/2} e^{\Phi^{(1)}_{h}} a_0(x_1,r)b(\theta), \quad \alpha_2=|g|^{-1/4} c^{1/2} e^{\Phi^{(2)}_{h}},
\end{equation}
and
\begin{equation}
\label{eq_proof_4}
\|r_j\|_{L^2(M^0)}=o(h^{\frac{s}{2}}), \quad \|dr_j\|_{L^2(M^0)}=o(h^{\frac{s}{2}-1}), \quad h\to 0, \quad j=1,2.
\end{equation}
Furthermore, $\Phi^{(j)}_{h}\in C^\infty (M) $ satisfies  \eqref{eq_cgo_19} and \eqref{eq_cgo_19_L_infty}, and 
\begin{equation}
\label{eq_proof_4_0}
\|\Phi^{(j)}-\Phi^{(j)}_{h}\|_{L^2(M)}=o(h^{\frac{s}{2}}), \quad h\to 0, 
\end{equation}
where 
\begin{align*}
&\Phi^{(1)}(x_1,r,\theta) = \frac{1}{4} \frac{1}{\pi (x_1+i r)} * (X^{(1)}_1 +i X^{(1)}_r)c,\\
&\Phi^{(2)}(x_1,r,\theta)= -\frac{1}{4} \frac{1}{\pi (x_1+i r)} * (X^{(2)}_1 +i X^{(2)}_r)c,
\end{align*}
with $X^{(j)}=X^{(j)}_1 \p_{x_1}+X^{(j)}_r\p_r+X^{(j)}_\theta \p_\theta$, $j=1,2$.  Thus, 
\begin{equation}
\label{eq_proof_4_1}
\Phi=\Phi^{(1)}+\Phi^{(2)}
\end{equation}
 satisfies 
\begin{equation}
\label{eq_proof_4_2}
\overline{\p} \Phi= \frac{1}{4} (\tilde X_1 +i \tilde X_r)c, \quad \text{in}\quad M,
\end{equation}
where $\tilde X=X^{(1)}-X^{(2)}$. Furthermore, $\alpha_1$ and $\alpha_2$ satisfy \eqref{eq_cgo_19_ampl_estimates}.

We shall next insert  $u_1$ and $u_2$, given by \eqref{eq_proof_2},  into \eqref{eq_integral_0_proof}, multiply it by $h$, and let $h\to 0$.  To that end, we first compute
\begin{equation}
\label{eq_proof_6}
\tilde X(u_1)u_2=-\frac{1}{h}\tilde X(\rho) (\alpha_1\alpha_2 +\alpha_1r_2+\alpha_2 r_1+r_1r_2)+ (\tilde X(\alpha_1)+\tilde X(r_1))(\alpha_2+r_2).
\end{equation}

Using \eqref{eq_cgo_19_ampl_estimates} and \eqref{eq_proof_4}, we get 
\begin{equation}
\label{eq_proof_6_01}
\begin{aligned}
\bigg|  \int_M \tilde X(\rho)& (\alpha_1r_2+\alpha_2 r_1+r_1r_2)dV  \bigg|\\
&\le \mathcal{O}(1)(\|\alpha_1 \|_{L^\infty}\|r_2\|_{L^2}+ \|\alpha_2 \|_{L^\infty}\|r_1\|_{L^2}+ \|r_1 \|_{L^2}\|r_2\|_{L^2})=o(h^{\frac{s}{2}}),
\end{aligned}
\end{equation}
and 
\begin{equation}
\label{eq_proof_6_02}
\begin{aligned}
\bigg| h\int_M (\tilde X(\alpha_1)& +\tilde X(r_1))(\alpha_2+r_2) dV\bigg|\\
&\le \mathcal{O}(h)(\|\nabla \alpha_1\|_{L^2}+\|\nabla r_1\|_{L^2})(\|\alpha_2\|_{L^\infty}+\|r_2\|_{L^2})=o(h^{\frac{s}{2}}).
\end{aligned}
\end{equation}

In view of \eqref{eq_proof_6}, \eqref{eq_proof_6_01}, and \eqref{eq_proof_6_02},    we obtain from \eqref{eq_integral_0_proof}  that 
\begin{equation}
\label{eq_proof_7}
\lim_{h\to 0} \int_M \tilde X(\rho) \alpha_1\alpha_2dV=\lim_{h\to 0}\int_M \tilde X(\rho) |g|^{-1/2} c e^{\Phi^{(1)}_h+\Phi^{(2)}_h}a_0bdV=0.
\end{equation}

Let us show that 
\begin{equation}
\label{eq_proof_8}
\lim_{h\to 0}\int_M \tilde X(\rho) |g|^{-1/2} c e^{\Phi^{(1)}_h+\Phi^{(2)}_h}a_0bdV=
\int_M \tilde X(\rho) |g|^{-1/2} c e^{\Phi}a_0bdV,
\end{equation}
where $\Phi$ is given by \eqref{eq_proof_4_1}. To that end, we get
\begin{align*}
\bigg|  \int_M \tilde X(\rho) |g|^{-1/2} c \big(e^{\Phi^{(1)}_h+\Phi^{(2)}_h}- e^{\Phi} \big) a_0bdV  \bigg|\le C\| e^{(\Phi^{(1)}_{h}+\Phi^{(2)}_{h})}-e^{\Phi} \|_{L^2(M)}\\
\le \| \Phi^{(1)}_{h}+\Phi^{(2)}_{h}-\Phi_1-\Phi_2 \|_{L^2(M)}=o(h^{\frac{s}{2}})\to 0, \quad h\to 0,
\end{align*}
proving the claim. 
Here we have used \eqref{eq_proof_4_0}, the inequality 
\[
|e^z-e^w|\le |z-w|e^{\max(\text{Re}\, z,\text{Re}\, w)}, \quad z,w\in \C,
\]
and the fact that $\Phi^{(j)}, \Phi^{(j)}_{h}\in L^\infty(M)$ and $\|\Phi^{(j)}_{h}\|_{L^\infty(M)}\le C$ uniformly in $h$.

Hence, it follows from \eqref{eq_proof_7} and \eqref{eq_proof_8} that 
\[
\int_M \tilde X(\rho) |g|^{-1/2} c e^{\Phi}a_0bdV=0,
\]
and thus, writing this integral in the global coordinates $(x_1,r,\theta)$ and using that $dV_g=|g|^{1/2}dx_1drd\theta$, we get
\begin{equation}
\label{eq_proof_cov_1}
\int_M (\tilde X_1+i \tilde X_r) c  e^{\Phi}a_0(x_1,r)b(\theta)dx_1dr d\theta=0.
\end{equation}

Using only that $X^{(1)}, X^{(2)}\in L^\infty(M, TM)$ and arguing as in the proof of Theorem 1.3 in \cite{Krup_Uhlmann_magnet_manifolds}, see also \cite{DKSU_2007},  we obtain from \eqref{eq_proof_cov_1} that 
\begin{equation}
\label{eq_proof_12}
\int\!\!\!\int\!\!\!\int_{\R\times D} (\tilde X_1+i \tilde X_r) c  e^{i\lambda(x_1+ir)} b(\theta) dx_1drd\theta=0, \quad \lambda\in \R.
\end{equation}

Letting $\tilde X^\flat=g_{jk}\tilde X^{j}dx^k\in (L^\infty\cap H^s)(M^0, T^*M^0)$, $0\le s\le 1$, be the one form, corresponding to the vector field $\tilde X$,   we have $\tilde X_1^\flat=g_{j1}\tilde X^1=c\tilde X^1$, and $\tilde X_r^\flat=c\tilde X^r$, and therefore, \eqref{eq_proof_12} can be rewritten as follows,
\begin{equation}
\label{eq_proof_13}
\int\!\!\!\int\!\!\!\int_{\R\times D} (\tilde X^\flat_1+i\tilde X^\flat_r) e^{i\lambda(x_1+ir)} b(\theta) dx_1drd\theta=0.
\end{equation}

Now the argument in the proof of Theorem 1.3 in \cite{Krup_Uhlmann_magnet_manifolds} applies as it stands allowing us to  conclude that  $d \tilde X^\flat =0$ in $M$ for all $0\le s\le 1$.

We shall next show that $\tilde X=0$ in $M$, when $s=1$. Since $M$ is simply connected, by the Poincar\'e lemma for currents, see \cite{de_Rham},  we conclude that  there is $\phi\in \mathcal{D}'(M)$ such that 
\begin{equation}
\label{eq_sec_10_1}
d\phi = \tilde X^\flat  \in (L^\infty\cap H^s)(M^0, T^*M^0) \cap \mathcal{E}'(M^0, T^*M^0), \quad 0\le s\le 1. 
\end{equation}
It follows from \cite[Theorem 4.5.11]{Hormander_book_1} that $\phi$ is continuous and $\phi$ is a constant $c$ near $\p M$.  Therefore, $\phi\in W^{1,\infty}(M^0)$, and since the boundary $\p M$ is connected by considering $\phi-c$, we may assume that $\phi=0$ on $\p M$. 
It follows from \eqref{eq_sec_10_1} that 
\begin{equation}
\label{eq_sec_10_1_gradient}
\nabla \phi=\tilde X= X^{(1)}-X^{(2)}.
\end{equation}

We have
\begin{align*}
e^{\frac{\phi}{2}}\circ (-\Delta +X^{(2)})\circ e^{-\frac{\phi}{2}}=&-\Delta +X^{(2)}+ \nabla \phi
+
\big(\frac{1}{2}\Delta\phi  -\frac{1}{4} \langle d\phi, d\phi \rangle-\frac{1}{2} X^{(2)}(\phi)\big). 
\end{align*}
Thus, in view of \eqref{eq_sec_10_1_gradient}, we get 
\begin{equation}
\label{eq_sec_200_3}
P_{X^{(1)},Y, q}:=e^{\frac{\phi}{2}}\circ (-\Delta +X^{(2)})\circ e^{-\frac{\phi}{2}}=-\Delta +X^{(1)}+ \div Y+ q,
\end{equation}
where 
\begin{equation}
\label{eq_sec_200_3_poten}
\begin{aligned}
Y=
\frac{1}{2}\nabla\phi \in (L^\infty\cap H^s)(M^0, TM^0), \quad q=  -\frac{1}{4} \langle d\phi, d\phi \rangle-\frac{1}{2} X^{(2)}(\phi)\in L^\infty(M).
\end{aligned}
\end{equation}

Using \eqref{eq_int_5_advec_poten}, we define the set of the Cauchy data for $P_{X^{(1)},Y, q}$,
\[
C_{X^{(1)},Y, q}=\{(u|_{\p M}, (\p_\nu-\langle Y, \nu\rangle ) u|_{\p M}): u\in H^1(M^0), P_{X^{(1)},Y,q}u=0 \text{ in }\mathcal{D}'(M^0)\}. 
\]

Introducing the set of the Cauchy data for $L_{X^{(j)}}$, $j=1,2$, as follows
\[
C_{X^{(j)}}=\{(u|_{\p M}, \Lambda_{X^{(j)}}(u|_{\p M})): u\in H^1(M^0), L_{X^{(j)}}u=0 \text{ in }\mathcal{D}'(M^0)\},
\]
we shall show that 
\begin{equation}
\label{eq_sec_200_4}
C_{X^{(1)},Y, q}=C_{X^{(2)}}.
\end{equation}
In order to see \eqref{eq_sec_200_4}, let $u\in H^1(M^0)$ be a solution to  $L_{X^{(2)}}u=0$ in $\mathcal{D}'(M^0)$. Then it follows from \eqref{eq_sec_200_3} that $e^{\frac{\phi}{2}}u\in H^1(M^0)$ satisfies $P_{X^{(1)},Y,q}(e^{\frac{\phi}{2}}u)=0$ in $\mathcal{D}'(M^0)$. We have 
$e^{\frac{\phi}{2}}u|_{\p M}=u|_{\p M}$. Using \eqref{eq_int_5_advec_poten}, \eqref{eq_sec_10_1_gradient}, and \eqref{eq_int_2_2}, the fact that $\nabla \phi (u)=\langle \nabla \phi, \nabla u\rangle$,  for $\psi\in H^{1/2}(\p M)$, we get
\begin{align*}
\langle (\p_\nu-\langle Y, \nu\rangle)& (e^{\frac{\phi}{2}}u), \psi \rangle_{H^{-1/2}(\p M)\times H^{1/2}(\p M)}\\
&=\langle (\p_\nu-\langle Y, \nu\rangle ) (e^{\frac{\phi}{2}}u), e^{-\frac{\phi}{2}} \psi \rangle_{H^{-1/2}(\p M)\times H^{1/2}(\p M)}\\
&=\int_M\langle \nabla (e^{\frac{\phi}{2}} u), \nabla (e^{-\frac{\phi}{2}} v)\rangle dV +\int_M (X^{(2)}+\nabla\phi)(e^{\frac{\phi}{2}}  u) e^{-\frac{\phi}{2}} vdV\\
&-\int_{M} \frac{1}{2}\nabla\phi (uv)dV +\int_{M} (-\frac{1}{4} \langle \nabla \phi, \nabla\phi \rangle-\frac{1}{2} X^{(2)}(\phi)) uvdV\\
&=\int_{M} \langle \nabla u, \nabla v\rangle dV  +\int_M X^{(2)}(u)v dV=\langle \p_\nu u, \psi \rangle_{H^{-1/2}(\p M)\times H^{1/2}(\p M)},
\end{align*}
showing \eqref{eq_sec_200_4}. 

The fact that $\Lambda_{X^{(1)}}=\Lambda_{X^{(2)}}$ and \eqref{eq_sec_200_4} imply that 
\begin{equation}
\label{eq_sec_200_5}
C_{X^{(1)},Y, q}=C_{X^{(1)}}.
\end{equation}
 Let us show, as a consequence of \eqref{eq_sec_200_5},  that we have the following integral identity,
\begin{equation}
\label{eq_integral_identity_new_pot}
\int_M (qu_1 u_2 -Y(u_1u_2)) dV=0,
\end{equation}
for any $u_1,u_2\in H^1(M^0)$ satisfying 
\begin{equation}
\label{eq_sec_200_6_-2}
L_{X^{(1)}}u_1=0\quad \text{in}\quad \mathcal{D}'(M^0),
\end{equation}
and 
\begin{equation}
\label{eq_sec_200_6_-1}
(-\Delta -X^{(1)} -\div (X^{(1)}) +\div(Y)+q)u_2=0\quad \text{in}\quad \mathcal{D}'(M^0).
\end{equation}
Indeed,  \eqref{eq_sec_200_5} implies that there is a solution $v\in H^1(M^0)$ to \eqref{eq_int_4_advec_poten} such that 
\begin{equation}
\label{eq_sec_200_6}
u_1|_{\p M}=v|_{\p M}, \quad \p_\nu u_1|_{\p M}=(\p_\nu-\langle Y, \nu\rangle)v|_{\p M}.
\end{equation}
Thus, \eqref{eq_sec_200_6} implies that 
\begin{equation}
\label{eq_sec_200_7}
\langle \p_\nu u_1, u_2\rangle_{H^{-1/2}(\p M)\times H^{1/2}(\p M)}=\langle (\p_\nu-\langle Y, \nu\rangle)v|_{\p M}, u_2\rangle_{H^{-1/2}(\p M)\times H^{1/2}(\p M)}.
\end{equation}
As $v$, $u_2$ solve  \eqref{eq_int_4_advec_poten} and \eqref{eq_sec_200_6_-1}, correspondingly,  by   \eqref{eq_int_5_advec_poten}, we obtain that 
\begin{equation}
\label{eq_sec_200_8}
\begin{aligned}
\langle& (\p_\nu-\langle Y, \nu\rangle ) v, u_2 \rangle_{H^{-1/2}(\p M)\times H^{1/2}(\p M)}\\
&=\int_M\langle \nabla v, \nabla u_2\rangle dV +\int_M X^{(1)}(v) u_2dV
-\int_{M} Y(vu_2)dV +\int_{M} q vu_2dV\\
&=\langle (\p_\nu - \langle X^{(1)}, \nu\rangle +\langle Y, \nu\rangle ) u_2, v \rangle_{H^{-1/2}(\p M)\times H^{1/2}(\p M)}\\
&=\langle (\p_\nu - \langle X^{(1)}, \nu\rangle +\langle Y, \nu\rangle ) u_2, u_1 \rangle_{H^{-1/2}(\p M)\times H^{1/2}(\p M)}\\
&=\int_M\langle \nabla u_2, \nabla u_1\rangle dV +\int_M u_2 X^{(1)}(u_1) dV
-\int_{M} Y(u_1u_2 )dV +\int_{M} q u_1u_2dV.
\end{aligned}
\end{equation}
On the other hand, as $u_1$ solves \eqref{eq_sec_200_6_-2}, we get
\begin{equation}
\label{eq_sec_200_9}
\langle \p_\nu u_1, u_2\rangle_{H^{-1/2}(\p M)\times H^{1/2}(\p M)}= \int_M\langle \nabla u_1, \nabla u_2\rangle dV +\int_M X^{(1)}(u_1)u_2 dV.
\end{equation}
Hence, \eqref{eq_integral_identity_new_pot} follows from \eqref{eq_sec_200_7},  \eqref{eq_sec_200_8}, and \eqref{eq_sec_200_9}.

Now by Proposition \ref{prop_cgo-admiss},  for all $h>0$ small enough, there are  solutions $u_1,u_2\in H^1(M^0)$ of the form
\begin{equation}
\label{eq_proof_2_pot}
\begin{aligned}
u_1&=e^{-\frac{\rho}{h}}(|g|^{-1/4} c^{1/2} e^{i\lambda \rho} e^{\Phi^{(1)}_{h}} b(\theta) +r_1 )=e^{-\frac{\rho}{h}} (\alpha_1+r_1),\\
u_2&=e^{\frac{\rho}{h}}(|g|^{-1/4} c^{1/2} e^{\Phi^{(2)}_{h}}  +r_2)=e^{\frac{\rho}{h}}(\alpha_2  +r_2 ),
\end{aligned}
\end{equation}
to the equations \eqref{eq_sec_200_6_-2} and \eqref{eq_sec_200_6_-1}, respectively. 
Here $\rho=x_1+ir $, 
\begin{equation}
\label{eq_proof_3_pot}
\alpha_1=|g|^{-1/4} c^{1/2} e^{i\lambda \rho}  e^{\Phi^{(1)}_{h}} b(\theta), \quad \alpha_2=|g|^{-1/4} c^{1/2} e^{\Phi^{(2)}_{h}},
\end{equation}
$\lambda$ is a fixed real number,  $b\in C^\infty(\mathbb{S}^{n-1})$ is a fixed function, and
\begin{equation}
\label{eq_proof_4_pot}
\|r_j\|_{L^2(M^0)}=o(h^{\frac{s}{2}}), \quad \|dr_j\|_{L^2(M^0)}=o(h^{\frac{s}{2}-1}), \quad h\to 0, \quad j=1,2.
\end{equation}
Furthermore, it follows from \eqref{eq_cgo_11} that $\Phi^{(j)}_{h}\in C^\infty (M) $ are given by 
\begin{align*}
&\Phi^{(1)}_h(x_1,r,\theta) = \frac{1}{4} \frac{1}{\pi (x_1+i r)} * ((X^{(1)}_h)_1 +i (X^{(1)}_h)_r) c,\\
&\Phi^{(2)}_h(x_1,r,\theta) = -\frac{1}{4} \frac{1}{\pi (x_1+i r)} * ((X^{(1)}_h)_1 +i (X^{(1)}_h)_r) c,
\end{align*}
and therefore 
\begin{equation}
\label{eq_proof_3_pot_phys}
\Phi^{(1)}_h+\Phi^{(2)}_h=0.
\end{equation}  
Finally, the amplitudes $\alpha_1$ and $\alpha_2$ satisfy \eqref{eq_cgo_19_ampl_estimates}.

Next let us show that inserting  $u_1$ and $u_2$, given by \eqref{eq_proof_2_pot}  into the integral identify \eqref{eq_integral_identity_new_pot} and letting $h\to 0$, we get
\begin{equation}
\label{eq_sec_200_10}
\lim_{h\to 0}\int_M\bigg(q\alpha_1\alpha_2-\frac{1}{2}\langle \nabla\phi, \nabla(\alpha_1\alpha_2) \rangle\bigg) dV=0.
\end{equation} 
Indeed, using \eqref{eq_proof_4_pot} and \eqref{eq_cgo_19_ampl_estimates}, we see that  
\begin{equation}
\label{eq_sec_9_2}
\begin{aligned}
\bigg|  \int_{M}  q(\alpha_1r_2+\alpha_2 r_1+r_1r_2)dV\bigg|\le 
C\| q\|_{L^\infty(M)} \big( \|\alpha_1\|_{L^\infty(M)}\|r_2\|_{L^2(M)}\\
+ \|\alpha_2\|_{L^\infty(M)}\|r_1\|_{L^2(M)}  +\|r_1\|_{L^2(M)}\|r_2\|_{L^2(M)} \big)=o(h^{\frac{s}{2}}).
\end{aligned}
\end{equation}

As $\nabla \phi\in (H^s\cap L^\infty)(M^0, TM^0)\cap\mathcal{E}'(M^0, TM^0)$, $0\le s\le 1$, an application of Proposition \ref{prop_app_R_n} shows that there is a family of $\phi_\tau\in C^\infty_0(M^0)$, $\tau>0$, such that 
\begin{equation}
\label{eq_sec_200_11}
\|\nabla \phi-\nabla\phi_\tau\|_{L^2}=o(\tau^s),
\end{equation}
\begin{equation}
\label{eq_sec_200_12}
\| \nabla \phi_\tau\|_{L^\infty}=\mathcal{O}(1), \quad \| \nabla^2 \phi_\tau \|_{L^2}=\begin{cases} o(\tau^{s-1}), & 0\le s<1,\\
\mathcal{O}(1), & s=1.
\end{cases}
\end{equation}

Using \eqref{eq_cgo_19_ampl_estimates}, \eqref{eq_proof_4_pot}, \eqref{eq_sec_200_11} and \eqref{eq_sec_200_12}, we get 
\begin{equation}
\label{eq_sec_9_3}
\begin{aligned}
&\bigg|  \int_{M} \langle \nabla\phi, \nabla (\alpha_1r_2) \rangle dV\bigg|\le   \int_{M} \big| \langle \nabla \phi-\nabla \phi_\tau, \nabla ( \alpha_1r_2)\rangle \big| dV+ 
\int_{M} \big| \langle \Delta\phi_\tau,  \alpha_1r_2\rangle  \big| dV\\
&\le \| \nabla \phi -\nabla \phi_\tau\|_{L^2(M)} \big(  \|\nabla \alpha_1\|_{L^\infty(M)} \|r_2\|_{L^2(M)} + \|\alpha_1\|_{L^\infty(M)} \|\nabla  r_2\|_{L^2(M)} \big)\\
&+ \|\Delta \phi_\tau\|_{L^2(M)}\| \alpha_1\|_{L^\infty(M)}\|r_2\|_{L^2(M)}=o(\tau^s) o(h^{\frac{s}{2}-1})+\mathcal{O}(\tau^{s-1})o(h^{\frac{s}{2}})=o(h^{\frac{3s}{2}-1}),
\end{aligned}
\end{equation}
provided that $\tau=h$. 
Similarly, we have 
\begin{equation}
\label{eq_sec_9_4}
\bigg|  \int_{M} \langle \nabla\phi, \nabla (\alpha_2r_1) \rangle dV\bigg|=o(h^{\frac{3s}{2}-1}).
\end{equation}

Using \eqref{eq_proof_4_pot}, we have 
\begin{equation}
\label{eq_sec_9_5}
\begin{aligned}
\bigg|  \int_{M} \langle \nabla\phi, \nabla (r_1r_2) \rangle dV\bigg|\le \| \nabla \phi\|_{L^\infty(M)} \big(  \|\nabla r_1\|_{L^2(M)} \|r_2\|_{L^2(M)} \\
+ \|r_1\|_{L^2(M)} \|\nabla  r_2\|_{L^2(M)} \big)=o(h^{s-1}).
\end{aligned}
\end{equation}
Thus, when $s=1$, \eqref{eq_sec_200_10} follows from \eqref{eq_sec_9_2}, \eqref{eq_sec_9_3}, \eqref{eq_sec_9_4} and \eqref{eq_sec_9_5}.

We are now able to complete the proof of Theorem \ref{thm_main}. Here  $X^{(1)},X^{(2)}\in (H^1\cap L^\infty)(M^0, TM^0)$ so that  $\nabla\phi\in H^1_0(M)$. Therefore,   integrating by parts in \eqref{eq_sec_200_10}, and using \eqref{eq_proof_3_pot}, \eqref{eq_proof_3_pot_phys}, we get 
\begin{equation}
\label{eq_sec_200_14}
0=\lim_{h\to 0}\int_M \bigg(q+\frac{1}{2}\Delta \phi\bigg) \alpha_1\alpha_2 dV= \int_M \bigg(q+\frac{1}{2}\Delta \phi\bigg) |g|^{-1/2} e^{i\lambda(x_1+ir)} c bdV.
\end{equation}
Writing  
\begin{equation}
\label{eq_sec_200_14_1}
\tilde q= q+\frac{1}{2}\Delta \phi\in L^2(M)
\end{equation}
  and $dV= |g|^{1/2} dx_1dr d\theta$,  we have from \eqref{eq_sec_200_14},
\begin{equation}
\label{eq_sec_200_15}
\int \!\!\int \!\!\int_{\R\times D}  \tilde q(x_1,r,\theta)  c(x_1,r,\theta)  e^{i\lambda (x_1+ir)}  b(\theta)   dx_1 dr d\theta=0.
\end{equation}
Here we take $\omega\in \p D$ to define the Riemannian polar normal coordinates $(r, \theta)$. Setting 
\[
f(r,\theta)=\int_{-\infty}^\infty  e^{i\lambda x_1} \tilde q(x_1,r,\theta)  c(x_1,r,\theta)  dx_1,
\]
we have $f\in L^1(D)$. Thus, it follows from  \eqref{eq_sec_200_15} that 
\begin{equation}
\label{eq_sec_200_16}
 \int_{\mathbb{S}^{n-2}} \int_0^{\tau(\omega, \theta)}   f(r,\theta) e^{-\lambda r}  b(\theta)   dr d\theta=0,
\end{equation}
for all $\omega\in \p D$, all $b\in C^\infty(\mathbb{S}^{n-2})$. 

The integral in \eqref{eq_sec_200_16} is related to the attenuated geodesic ray transform acting on the function $f$ in $D$ with constant attenuation $-\lambda$. In order to proceed we shall need the following result from  \cite[Lemma 5.1]{DKSalo_2013} for $f\in L^1(D)$.

\begin{prop}
\label{prop_Kenig}
Let $(D,g_0)$ be an $(n-1)$-dimensional simple manifold and let $f\in L^1(D)$.  Consider the integrals
\[
\int_{\mathbb{S}^{n-2}}\int_0^{\tau(\omega, \theta)}  f(\gamma_\theta(r)) e^{-\lambda r} b(\theta)dr d\theta,
\] 
where $(r,\theta)$ are polar normal coordinates in $(D,g_0)$ centered at some $\omega\in \p D$ and $\tau(\omega, \theta)$ is the time when the geodesic $\gamma_\theta:  r\mapsto (r, \theta)$ exits $D$.  If  $|\lambda|$ is sufficiently small, and if these integrals vanish  for all $\omega\in \p D$ and all $b\in C^\infty(\mathbb{S}^{n-2})$, then $f=0$. 
\end{prop}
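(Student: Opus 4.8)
The plan is to reduce this statement, which is \cite[Lemma 5.1]{DKSalo_2013}, to the classical injectivity of the attenuated geodesic ray transform on \emph{smooth} functions over a simple manifold, the bridge being the ellipticity of the associated normal operator. First I would enlarge the domain: choose a simple manifold $(D_1,g_0)$ with $D\subset\subset D_1^0$ and extend $f$ by zero to $D_1$, keeping the notation $f$; then $f\in L^1(D_1)$ and $f$ vanishes outside $D$. Write $I_\lambda$ for the attenuated ray transform with constant attenuation, i.e.\ the geodesic integral weighted by $e^{-\lambda r}$, $r$ being arclength measured from the point where the geodesic enters the domain. If a maximal geodesic $\gamma$ of $D_1$ enters $D$ at arclength $r_0(\gamma)>0$, then $I_\lambda^{D_1}f(\gamma)=e^{-\lambda r_0(\gamma)}\,I_\lambda^{D}f(\gamma)$, while geodesics of $D_1$ missing $D$ contribute $0$; hence the vanishing of $I_\lambda^{D}f$ on a geodesic is equivalent to the vanishing of $I_\lambda^{D_1}f$ there, and it suffices to argue on $D_1$.

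The next step is to upgrade the tested hypothesis to the pointwise statement that $I_\lambda^{D_1}f=0$ for almost every geodesic. For $\omega\in\partial D$ and polar normal coordinates $(r,\theta)$ at $\omega$, the hypothesis reads $\int_{\mathbb{S}^{n-2}}g_\omega(\theta)\,b(\theta)\,d\theta=0$ for all $b\in C^\infty(\mathbb{S}^{n-2})$, where $g_\omega(\theta)=\int_0^{\tau(\omega,\theta)}f(\gamma_\theta(r))e^{-\lambda r}\,dr$. By Santal\'o's formula, $\int_{\partial D}\int_{\mathbb{S}^{n-2}}\int_0^{\tau}|f(\gamma_{\omega,\theta}(r))|\,\mu(\omega,\theta)\,dr\,d\theta\,d\omega$ is a fixed multiple of $\int_D|f|\,dV<\infty$, where $\mu$ denotes the cosine of the angle the geodesic makes with $\partial D$. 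Hence for a.e.\ $\omega$ the function $g_\omega$ is integrable against $\mu(\omega,\cdot)\,d\theta$; since $\mu(\omega,\cdot)>0$ off a set of measure zero and $b$ ranges over all smooth functions, we get $g_\omega=0$ a.e.\ in $\theta$, for a.e.\ $\omega$, i.e.\ $I_\lambda^{D_1}f=0$ for a.e.\ maximal geodesic of $D_1$.

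I would then invoke the normal operator $N_\lambda$ of $I_\lambda$ (the geodesic backprojection composed with $I_\lambda$), acting on distributions supported in $D\subset\subset D_1^0$. Because the weight $e^{-\lambda r}$ is smooth and strictly positive for every $\lambda\in\R$, $N_\lambda$ is a pseudodifferential operator of order $-1$ on $D_1^0$ whose principal symbol is a positive multiple of the integral of the squared weight over each cosphere, hence elliptic; this is the same computation as in the unattenuated case, cf.\ \cite{DKSalo_2013}. From the previous step the integrand in the backprojection vanishes a.e., so $N_\lambda f=0$, and elliptic regularity yields $f\in C^\infty(D_1^0)$; since $f$ vanishes outside $D$ it follows that $f\in C^\infty_0(D^0)$. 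At this point $f$ is a smooth compactly supported function on the simple manifold $(D,g_0)$ with $I_\lambda f=0$, and for $|\lambda|$ sufficiently small the injectivity of the attenuated geodesic ray transform on simple manifolds (which follows from the Pestov identity at $\lambda=0$ together with a perturbation argument, see \cite{DKSaloU_2009} and \cite{DKSalo_2013}) forces $f=0$.

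The main obstacle is the second step. For $f$ merely in $L^1$ the inner integral $g_\omega$ need not be $\theta$-integrable, because the polar-coordinate Jacobian degenerates at the center $\omega$ (to order $r^{n-2}$) and at glancing directions, so one cannot directly conclude $g_\omega=0$ from the identity tested against all $b$; Santal\'o's formula is precisely what controls the relevant average and lets one pass to pointwise vanishing. Once that is in place, the remainder is standard: elliptic $\Psi$DO theory promotes the $L^1$ function $f$ to a smooth one, and the known smooth-case injectivity for small constant attenuation finishes the argument.
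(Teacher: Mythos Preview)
The paper does not supply a proof of this proposition; it simply quotes the result from \cite[Lemma~5.1]{DKSalo_2013} and uses it as a black box. Your sketch is essentially the argument given in that reference: enlarge to a simple $D_1\supset\supset D$, use Santal\'o's formula to pass from the tested hypothesis to a.e.\ vanishing of $I_\lambda f$ on the space of geodesics, invoke ellipticity of the normal operator $N_\lambda=I_\lambda^*I_\lambda$ (a $\Psi$DO of order $-1$ with positive principal symbol for every real $\lambda$) to conclude $f\in C^\infty_0(D^0)$, and then apply the injectivity of the attenuated ray transform on smooth functions for small constant attenuation. The outline is correct, and you have identified the one genuinely delicate point, namely that for $f\in L^1$ the inner geodesic integral $g_\omega(\theta)$ need not be integrable in $\theta$ for a fixed $\omega$ because the polar Jacobian $\sim r^{n-2}$ degenerates; Santal\'o's formula, which averages over $\omega\in\partial D$ as well, is exactly what rescues this and gives $g_\omega\in L^1$ for a.e.\ $\omega$, after which density of $C^\infty(\mathbb{S}^{n-2})$ yields $g_\omega=0$ a.e.
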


Now varying the point $\omega\in \p D$ in the construction of the complex geometric optics solution in Proposition \ref{prop_cgo-admiss} and applying  Proposition \ref{prop_Kenig} to  \eqref{eq_sec_200_16},  we get 
\[
f(r,\theta)=\int_{-\infty}^\infty  e^{i\lambda x_1} \tilde q(x_1,r,\theta)  c(x_1,r,\theta)  dx_1=0,
\]
for a.a. $(r,\theta)$ and for all $|\lambda|$ sufficiently small, and therefore, for all $\lambda\in \C$ by analyticity of the Fourier transform of the compactly supported function $\tilde q(\cdot,r,\theta)  c(\cdot,r,\theta)\in L^1 $ for a.a. $(r,\theta)$. Thus, in view of \eqref{eq_sec_200_14_1} and  \eqref{eq_sec_200_3_poten}, we conclude that 
\begin{equation}
\label{eq_sec_200_17}
\Delta \phi -\frac{1}{2}\langle \nabla \phi, \nabla \phi\rangle -X^{(2)}(\phi)=0. 
\end{equation}
Now letting $ V=X^{(2)}+\frac{1}{2} \langle \nabla \phi, \nabla \rangle\in L^\infty$, we rewrite \eqref{eq_sec_200_17} as the following elliptic boundary value problem,
\begin{equation}
\label{eq_sec_200_18}
\begin{aligned}
\Delta \phi - V(\phi)&=0,\\
\phi|_{\p M}&=0.
\end{aligned}
\end{equation}
Applying the maximum principle of \cite[Chapter 3, Section 8.2]{Aubin_book} to \eqref{eq_sec_200_18}, we get
that $\phi=0$ and therefore, in view of \eqref{eq_sec_10_1_gradient}, $X^{(1)}=X^{(2)}$.  The proof of  Theorem \ref{thm_main} is therefore complete.

\section{Proof of Theorem \ref{thm_main_2}}
\label{sec_proof_thm_2}

Let us now describe modifications in the arguments needed to establish   Theorem \ref{thm_main_2} where $X^{(1)},X^{(2)}\in  H^{\frac{2}{3}} (M^0, TM^0) \cap C^{0, \frac{1}{3}} (M, TM)$. First, arguing as in \cite[Section 2.2]{Krup_Uhlmann_calderon}, we can extend $X^{(1)}$, $X^{(2)}$ to compactly supported vector fields in $ H^{\frac{2}{3}} (\R\times M^0_0, T(\R\times M^0_0)) \cap C^{0, \frac{1}{3}} (\R\times M_0, T(\R\times M_0))$. 
 An application of Corollary \ref{cor_boundary_rec} gives that $X^{(1)}=X^{(2)}$ on $\p M$, and therefore, $(X^{(1)}-X^{(2)})1_{(\R \times M^0_0)\setminus M}\in  H^{\frac{2}{3}} \cap  C^{0, \frac{1}{3}}$, see  \cite[Section 5.1]{Krup_Uhlmann_calderon} and \cite[Theorem 3.4.1, p. 41]{Agranovich_book}.  Replacing $X^{(2)}$ by $X^{(2)}+(X^{(1)}-X^{(2)})1_{(\R \times M^0_0)\setminus M}$, we achieve that $X^{(1)}=X^{(2)}$ outside of $M$.  
 
 From here on, everything works exactly as in the proof of Theorem \ref{thm_main} until we reach the proof of  \eqref{eq_sec_200_10}.   In order to show that  \eqref{eq_sec_200_10} remains valid we observe that \eqref{eq_sec_9_2}, \eqref{eq_sec_9_3}, \eqref{eq_sec_9_4} still hold and we only need to establish an analog of \eqref{eq_sec_9_5}.  
To that end, we shall use that $\nabla\phi= X^{(1)}- X^{(2)}\in C^{0,\frac{1}{3}}(M, TM)$, and therefore, using a partition of unity argument with a regularization in each coordinate patch, in view of Proposition \ref{prop_app_Holder_R_n}, we see  that there is a family $\phi_\tau\in C^\infty_0(M^0)$, $\tau>0$, such that 
\begin{equation}
\label{eq_sec_200_13}
\|\nabla \phi-\nabla\phi_\tau\|_{L^\infty}=\mathcal{O}(\tau^\frac{1}{3}), \
\| \nabla \phi_\tau\|_{L^\infty}=\mathcal{O}(1), \ \| \Delta \phi_\tau \|_{L^\infty}=\mathcal{O}(\tau^{-\frac{2}{3}}),\ \tau\to 0.
\end{equation}
Hence, using   \eqref{eq_proof_4_pot} and \eqref{eq_sec_200_13}, we get 
\begin{align*}
&\bigg|  \int_{M} \langle \nabla\phi, \nabla (r_1r_2) \rangle dV\bigg|\le \int_{M} \big| \langle \nabla \phi-\nabla \phi_\tau, \nabla ( r_1r_2)\rangle \big| dV+ 
\int_{M} \big| \langle \Delta\phi_\tau,  r_1r_2\rangle  \big| dV\\
&\le \| \nabla \phi -\nabla \phi_\tau\|_{L^\infty(M)} \big(  \|\nabla r_1\|_{L^2(M)} \|r_2\|_{L^2(M)} + \|r_1\|_{L^2(M)} \|\nabla  r_2\|_{L^2(M)} \big)\\
&+ \|\Delta \phi_\tau\|_{L^\infty(M)}\| r_1\|_{L^2(M)}\|r_2\|_{L^2(M)}=\mathcal{O}(\tau^{\frac{1}{3}}) o(h^{-\frac{1}{3}})+\mathcal{O}(\tau^{-\frac{2}{3}})o(h^{\frac{2}{3}})=o(1),
\end{align*}
provided that $\tau=h$. Thus,  \eqref{eq_sec_200_10} follows. 

Using  \eqref{eq_sec_200_10} and \eqref{eq_proof_3_pot}, we obtain that 
\begin{equation}
\label{eq_sec_200_23}
\int_{\R\times D} \bigg(q c |g|^{-1/2} e^{i\lambda(x_1+ir)}b-\frac{1}{2}\langle \nabla \phi, \nabla (c |g|^{-1/2} e^{i\lambda(x_1+ir)}b) \rangle \bigg)dV=0.
\end{equation}
Using that $|g|=c^{n}|g_0|$ and  letting 
\[
F=c^{1-n/2}\bigg(q+\frac{1}{2}\Delta\phi\bigg)\in \mathcal{E}'(\R\times D^0),
\]
we rewrite \eqref{eq_sec_200_23} as follows,
\[
\langle F, |g_0|^{-1/2} b e^{i\lambda(x_1+ir)}\rangle_{\R\times D^0}=0,
\]
where $\langle \cdot,\cdot\rangle_{\R\times D^0}$ is the duality between $\mathcal{E}'(\R\times D^0)$ and $C^\infty(\R\times D^0)$. Letting $\hat F_\lambda\in \mathcal{E}'(D^0)$ be the Fourier transform of $F$ with respect to $x_1$, we get 
\begin{equation}
\label{eq_sec_200_24}
\langle \hat  F_{-\lambda}, |g_0|^{-1/2} b e^{-\lambda r}\rangle_{D^0}=0,
\end{equation}
for all $\lambda \in \R$ and $b\in C^\infty(\mathbb{S}^{n-2})$. 
 
In order to proceed we shall need the following result from  \cite[Lemma 4.1]{rodriguez}. 
\begin{prop}
\label{prop_Rodriguez}
Let $(D,g_0)$ be an $(n-1)$-dimensional simple manifold and let $G\in \mathcal{E}'(D^0)$.  Consider the duality pairing, 
\[
\langle  G, |g_0|^{-1/2} b e^{-\lambda r}\rangle_{D^0},
\]
where $(r,\theta)$ are polar normal coordinates in $(D,g_0)$ centered at some $\omega\in \p D$.  If  $|\lambda|$ is sufficiently small, and if these pairings vanish for all $\omega\in \p D$ and all $b\in C^\infty(\mathbb{S}^{n-2})$, then $G=0$. 
\end{prop}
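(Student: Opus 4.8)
The plan is to reduce the statement to the $L^1$ injectivity result already recorded as Proposition~\ref{prop_Kenig}, by first upgrading the regularity of $G$. In the polar normal coordinates $(r,\theta)$ centered at $\omega$ one has $dV_{g_0}=|g_0|^{1/2}\,dr\,d\theta$, so the pairing $\langle G,|g_0|^{-1/2}b\,e^{-\lambda r}\rangle_{D^0}$ is exactly the attenuated geodesic ray transform $I_\lambda G$ of $G$, with constant attenuation $-\lambda$, evaluated on the geodesics $\gamma_\theta:r\mapsto(r,\theta)$ issued from $\omega$ and tested against the weight $b(\theta)$; the hypothesis thus says $I_\lambda G=0$. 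Once we know that this forces $G\in C_0^\infty(D^0)\subset L^1(D)$, the pairings become the honest integrals appearing in Proposition~\ref{prop_Kenig}, and for $|\lambda|$ small that proposition yields $G=0$.

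For the regularity step I would embed $(D,g_0)$ into a slightly larger simple manifold $(D_1,g_0)$ with $\operatorname{supp}G\subset\subset D^0\subset\subset D_1^0$. Since $(D,g_0)$ is simple, any maximal geodesic of $D_1$ meeting $\operatorname{supp}G$ restricts to a single geodesic segment of $D$ with both endpoints on $\p D$; parametrizing it from one such endpoint $\omega\in\p D$ realizes it, together with the weight $e^{-\lambda r}$, inside the family over which the pairings are assumed to vanish, so $I_\lambda G$ vanishes on every geodesic of $D_1$. The backprojection $I_\lambda^*$ maps $C^\infty$ into $C^\infty(D_1^0)$, so $I_\lambda$, $I_\lambda^*$ and the normal operator $N_\lambda:=I_\lambda^*I_\lambda$ act on $\mathcal{E}'(D^0)$ by transposition, and we obtain $N_\lambda G=0$ in $\mathcal{D}'(D_1^0)$. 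On a simple manifold $N_\lambda$ is a classical pseudodifferential operator of order $-1$ which is elliptic over $D^0$: the constant attenuation only inserts a smooth, strictly positive factor in the amplitude along each geodesic and leaves the principal symbol equal to that of the normal operator of the unattenuated geodesic ray transform, which is elliptic on simple manifolds. Elliptic regularity then shows that the singular support of $G$ is empty, i.e. $G\in C_0^\infty(D^0)$, and the reduction is complete.

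The main obstacle is precisely the functional-analytic and microlocal bookkeeping in this regularity step: making sense of $I_\lambda$, $I_\lambda^*$ and $N_\lambda$ on compactly supported distributions, verifying that the attenuated normal operator inherits the elliptic pseudodifferential structure of the unattenuated one, and checking that using only centers $\omega\in\p D$ (not $\omega\in\p D_1$) captures all relevant geodesics of $D_1$ through $\operatorname{supp}G$. An equivalent route, avoiding the normal operator, is to interpret $I_\lambda G=0$ as the statement that the distributional solution $u$ on the unit sphere bundle $SD$ of the transport equation $(H+\lambda)u=-G$ (with $H$ the geodesic vector field), vanishing where geodesics enter $D$, in fact vanishes on all of $\p(SD)$, and then to run a Pestov-type energy identity for distributions; this trades the pseudodifferential bookkeeping for the equally delicate task of justifying the integration by parts at low regularity. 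Either way, this is the content of \cite[Lemma 4.1]{rodriguez}, which upgrades the $L^1$ statement of \cite[Lemma 5.1]{DKSalo_2013}.
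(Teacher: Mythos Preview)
The paper does not prove this proposition: it is quoted as \cite[Lemma~4.1]{rodriguez} and used as a black box, just as Proposition~\ref{prop_Kenig} is quoted from \cite[Lemma~5.1]{DKSalo_2013}. Your proposal correctly identifies the citation and supplies a plausible sketch of the argument behind it --- interpreting the hypothesis as $I_\lambda G=0$, extending $I_\lambda$, $I_\lambda^*$ and $N_\lambda=I_\lambda^*I_\lambda$ to $\mathcal{E}'(D^0)$ by duality, using that $N_\lambda$ is an elliptic pseudodifferential operator of order $-1$ on a slightly larger simple manifold (the constant attenuation only perturbs lower-order terms) to deduce $G\in C_0^\infty(D^0)$, and then invoking the $L^1$ result of Proposition~\ref{prop_Kenig}. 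This is the standard route for distributional upgrades of ray-transform injectivity on simple manifolds, and you also flag the genuine technical points (making sense of $I_\lambda$ on distributions; checking that centers $\omega\in\partial D$ suffice for geodesics of the enlarged manifold through $\operatorname{supp}G$). Since the paper contains no proof to compare against, and your sketch is consistent with what the cited reference does, there is nothing to correct.
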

 
An application of Proposition \ref{prop_Rodriguez} to \eqref{eq_sec_200_24}, combined with the analyticity of  $ \hat  F_{-\lambda}$ in $\lambda$ shows that $F=0$.  It follows that $\phi\in H^1(M)$ satisfies \eqref{eq_sec_200_18} and thus, $\phi=0$. The proof of Theorem \ref{thm_main_2} is complete.

\begin{appendix}
\section{Boundary determination of $H^1\cap L^\infty$ advection term}

\label{sec_boundary_rec}

When proving Theorem \ref{thm_main}, an important step consists in determining the boundary values of a vector field $X\in (H^1\cap L^\infty)(M^0, TM^0)$. The purpose of this section is to carry out this step by adapting the method of  \cite{Brown_2001}, \cite{Brown_Salo_2006}, developed in the case of the conductivity equation with possibly discontinuous conductivities, and in the case of the magnetic Schr\"odinger and advection operators with continuous potentials on $\R^n$. Here we shall also rely on a boundary reconstruction result for the magnetic Schr\"odinger operator with continuous potentials on a compact smooth Riemannian manifold with boundary given in our work \cite{Krup_Uhlmann_magnet_manifolds}.

\begin{thm} 
\label{prop_boundary_rec}
Let $(M,g)$ be a smooth compact Riemannian manifold of dimension $n\ge 3$ with smooth boundary $\p M$, and  let $X^{(1)}, X^{(2)}\in (H^1\cap L^\infty)(M^0, TM^0)$.  Assume that $\Lambda_{ X^{(1)}}=\Lambda_{ X^{(2)}}$. 
Then 
$X^{(1)}|_{\p M}=X^{(2)}|_{\p M}$ in $H^{1/2}(\p M, TM|_{\p M})$. 
\end{thm}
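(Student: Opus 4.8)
The plan is to reduce the boundary reconstruction for the advection operator to the known boundary reconstruction for the magnetic Schr\"odinger operator of \cite{Krup_Uhlmann_magnet_manifolds}, exploiting the reduction \eqref{eq_int_reduction_mag}, and then to handle the low regularity of $X^{(j)}$ by an approximation argument in the spirit of \cite{Brown_2001}, \cite{Brown_Salo_2006}. The key point is that, even though $X^{(j)}\in H^1\cap L^\infty$ is not smooth enough to directly form $\div(X^{(j)})$ as a bounded potential, the boundary values $X^{(j)}|_{\p M}$ are a trace-sense object, and the oscillatory-test-function method of Brown only probes $X^{(j)}$ in a thin collar near $\p M$, where one may replace $X^{(j)}$ by a mollified version and control the error using the $H^1$ (hence $H^{1/2}$-trace) regularity.

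First I would fix a boundary point $x_0\in\p M$ and work in boundary normal coordinates, so that near $x_0$ the manifold looks like a half-space with $\p M=\{x_n=0\}$ and $\nu=-\p_{x_n}$. Following \cite{Brown_2001}, I would construct a family of approximate solutions $u_h$ to $L_{X^{(j)}}u=0$ concentrating at $x_0$ as $h\to 0$, of the form $u_h = \eta(x)\, e^{i(x-x_0)\cdot(\tau,\xi)/h}\,(1+\text{corrector})$ with $\eta$ a fixed cutoff, $\xi$ a unit tangential covector and $\tau$ chosen so that the leading phase solves the eikonal equation for $-\Delta$; here the estimates of \cite{Krup_Uhlmann_magnet_manifolds} for the magnetic case provide the required correctors and the control $\|u_h\|_{H^1}=O(h^{-1/2})$, $\|u_h\|_{L^2}=O(h^{1/2})$. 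The second step is to insert two such solutions $u_h^{(1)}$, $u_h^{(2)}$ (associated to $X^{(1)}$ and $X^{(2)}$) into the integral identity coming from $\Lambda_{X^{(1)}}=\Lambda_{X^{(2)}}$, namely a boundary-reconstruction version of Proposition \ref{prop_integral_identity}: testing against the CGO-type solutions and using $\Lambda_{X^{(1)}}=\Lambda_{X^{(2)}}$ gives
\[
\int_M (X^{(1)}-X^{(2)})(u_h^{(1)})\, u_h^{(2)}\, dV + (\text{boundary terms})=o(1),
\]
where the boundary terms, by the trace formula relating $\p_\nu$ to $\langle d_A u,\nu\rangle$ displayed after \eqref{eq_int_reduction_mag}, are controlled once one already knows $X^{(1)}=X^{(2)}$ on $\p M$ in a weaker sense, or are shown to vanish in the limit by the localization.

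The third step is the extraction of $X^{(j)}|_{\p M}(x_0)$ from the leading asymptotics: with the phase chosen as above, $(X^{(1)}-X^{(2)})(u_h^{(1)})\, u_h^{(2)}$ has a leading term proportional to $h^{-1}\,\langle X^{(1)}-X^{(2)}, (\tau,\xi)\rangle\, |\eta|^2\, e^{-c x_n/h}$, and after multiplying the identity by $h$ and letting $h\to0$ one recovers, via a boundary-layer (Laplace) evaluation of the $x_n$-integral, the quantity $\langle (X^{(1)}-X^{(2)})|_{\p M}(x_0), (\tau,\xi)\rangle$. Since $\tau$ is determined by $\xi$ but both the tangential covector $\xi$ and the sign structure can be varied, running over all such test covectors recovers the full vector $X^{(1)}|_{\p M}(x_0)-X^{(2)}|_{\p M}(x_0)$ at almost every $x_0$, which together with $X^{(j)}|_{\p M}\in H^{1/2}(\p M)$ gives equality in $H^{1/2}(\p M, TM|_{\p M})$. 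The main obstacle I expect is the low regularity: the product $X^{(j)}(u_h^{(1)})\,u_h^{(2)}$ involves $X^{(j)}$ paired against objects that are only $O(h^{-1/2})$ in $H^1$, so justifying that the mollification error $\|X^{(j)}-X^{(j)}_\tau\|$ on the $O(h)$-collar near $\p M$ is $o(1)$ after multiplication by $h^{-1}$ requires a careful choice $\tau=\tau(h)$ and use of the $H^1$-bound on $X^{(j)}$ (via a trace/Hardy-type inequality $\int_{x_n<h}|X^{(j)}-X^{(j)}|_{\p M}|^2\,dV = o(h)$), exactly the delicate estimate that is handled in \cite{Brown_2001}, \cite{Brown_Salo_2006} and which we adapt to the manifold setting by combining it with the magnetic Schr\"odinger boundary-reconstruction machinery of \cite{Krup_Uhlmann_magnet_manifolds}.
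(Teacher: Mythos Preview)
Your overall strategy---construct oscillatory boundary-layer solutions concentrating at a point $x_0\in\p M$ and read off the value of $X^{(j)}$ at $x_0$ from the leading asymptotics---is the right one and matches the paper. However, several of the specific choices you make diverge from the paper, and at least one creates a real gap.

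First, the reduction to the magnetic Schr\"odinger operator via \eqref{eq_int_reduction_mag} is a detour: as you yourself note, for $X\in H^1\cap L^\infty$ the potential $q=\frac14|X|^2-\frac12\div X$ lies only in $L^2$, not $L^\infty$, so the boundary-reconstruction result of \cite{Krup_Uhlmann_magnet_manifolds} does not apply. The paper never attempts this reduction. It works directly with the advection operator and uses a different integral identity, obtained by comparing $\Lambda_{X^{(j)}}$ to the DN map $\Lambda_0$ of the pure Laplacian: with $u^{(j)}$ solving $L_{X^{(j)}}u^{(j)}=0$ and $v$ harmonic, both with the \emph{same} Dirichlet data $f=v_0|_{\p M}$, one has $\langle(\Lambda_{X^{(j)}}-\Lambda_0)f,\bar f\rangle=\int_M X^{(j)}(u^{(j)})\bar v\,dV$. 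This identity carries no extra boundary terms, so the circularity you flag (``controlled once one already knows $X^{(1)}=X^{(2)}$ on $\p M$'') simply does not arise. Your route through Proposition~\ref{prop_integral_identity} could also be made to work, but then $u_2$ must solve the \emph{adjoint} equation $(-\Delta-X^{(2)}-\div X^{(2)})u_2=0$, which you do not mention.

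Second, the paper scales the cutoff as $\eta(x/\lambda^{1/2})$, not a fixed $\eta(x)$; this shrinking support is what forces concentration at the single point $x_0$ and makes the pointwise argument below possible.

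Third---and this is the genuine gap---your proposed handling of the low regularity by mollifying $X^{(j)}$ to $X^{(j)}_\tau$ with $\tau=\tau(h)$ is not the mechanism the paper (or \cite{Brown_2001}) uses, and it is not clear it delivers what you need. A bound like $\|X-X_\tau\|_{L^2}=o(\tau)$ is an $L^2$ statement; it does not by itself yield convergence of the boundary-layer integral to $\langle X(x_0),(\tau',i)\rangle$ at \emph{almost every} fixed $x_0$. The paper instead exploits two pointwise facts specific to $H^1\cap L^\infty$: (i) $X$ has a representative that is absolutely continuous in the normal variable $x_n$ for a.e.\ $x'$ (Maz'ja), so $X^*_\mu(x'):=\sup_{0<x_n<\mu}|X(x',x_n)-X(x',0)|\to 0$ as $\mu\to 0$ for a.e.\ $x'$; and (ii) the $L^p$-boundedness of the Hardy--Littlewood maximal operator then gives $M(1_{B(0,r)}X^*_\mu)(x')\to 0$ for a.e.\ $x'$. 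Combining this with the Lebesgue differentiation theorem for $x'\mapsto X(x',0)$, the main term $I_1$ is split into a piece converging to $\frac{i}{2}(\tau',i)\cdot X(0)$ and remainders controlled by $M(X^*_\mu)(0)$ and the Lebesgue-point modulus at $0$. The remaining pieces $I_2$, $I_3$ (involving the correctors $v_1\in H^1_0$ and $w^{(j)}\in H^1_0$) are dispatched using Hardy's inequality $\|\psi/\delta\|_{L^2}\le C\|d\psi\|_{L^2}$ together with the estimates $\|\delta\,dv_0\|_{L^2}=\mathcal O(\lambda^{(n-1)/4+1/2})$ from \cite{Krup_Uhlmann_magnet_manifolds}. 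This is precisely the ``delicate estimate'' you allude to, but the actual argument is pointwise via absolute continuity and maximal functions, not an $L^2$ mollification-in-$h$ scheme.
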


\begin{proof}
We shall first follow \cite{Brown_2001}. Let $x_0\in \p M$ and let $(x_1,\dots, x_n)$ be the boundary normal coordinates centered at $x_0$ so that in these coordinates, $x_0 =0$, the boundary $\p M$ is given by $\{x_n=0\}$, and $M^0$ is given by $\{x_n > 0\}$. 

Localizing near $x_0$ and passing to the boundary normal coordinates, we obtain a vector field $X=X^{(j)}\in (H^1\cap L^\infty)(\R^n_+, \R^n)$,  $j=1,2$, supported in a small neighborhood of $0$.  From  \cite[Section 1.1.3, p. 8]{Mazya_book} and  \cite{Brown_2001}, we know that $X$ has a representative such that for a.a. $x'\in B(0,r)=\{x'\in\R^{n-1}:|x'|<r\}$, $r>0$, the function 
\begin{equation}
\label{eq_app_H1}
x_n\mapsto X(x',x_n)\in \R^n\quad\text{is absolutely continuous}.
\end{equation}
Let $\mu>0$ be small and define the function, 
\[
X^*_\mu(x')=\sup_{0<x_n<\mu} |X(x',x_n)-X(x',0)|.
\]
Thus, for a.a. $x'\in B(0,r)$, we have $X^*_\mu(x')\to 0$ as $\mu\to 0^+$.  Using that $X\in L^\infty$ together with \eqref{eq_app_H1}, we see that 
$X^*_\mu(x')\in L^\infty(\R^{n-1})$ uniformly in $\mu$.  By the dominated convergence theorem, $X^*_\mu\to 0$ in $L^p(B(0,r))$ for $1\le p<\infty$.  Define the Hardy--Littlewood maximal operator on $\R^{n-1}$ by 
\[
M(f)(x')=\sup_{s>0} s^{1-n}\int_{|x'-y'|<s} |f(y')|dy', 
\]
and recall that  $M:L^p(\R^{n-1})\to L^p(\R^{n-1})$, $1<p<\infty$, see \cite{Stein_book}. 
As  $X^*_{\mu}(x')$ is decreasing as $\mu\to 0$, we get $0\le M(1_{B(0,r)} X^*_{\mu})$ is decreasing and $M(1_{B(0,r)} X^*_{\mu})\to 0$ in $L^p (\R^{n-1})$, $1<p<\infty$, as $\mu\to 0$. Thus, by Fatou's lemma, we obtain that 
\[
\int \liminf_{\mu \to 0} (M(1_{B(0,r)} X^*_{\mu})^2dx'\le \liminf_{\mu\to 0} \|M(1_{B(0,r)} X^*_{\mu})\|^2_{L^2}=0, 
\] 
and therefore, 
\begin{equation}
\label{eq_app_H2}
\lim_{\mu\to 0}M( 1_{B(0,r)}  X^*_\mu)(x')=0, \quad \text{for a.a.} \quad x'.
\end{equation} 
Furthermore, since the function $x'\mapsto X(x',0)$ is in $L^1_{\loc}(\R^{n-1})$, by the Lebesgue differentiation theorem  almost every point $x'$ is a Lebesgue point, i.e. 
\begin{equation}
\label{eq_app_H3}
\lim_{r\to 0^+}\frac{1}{r^{n-1}} \int_{|x'-y'|<r}|X(x',0)-X(y',0)|dy'=0. 
\end{equation}

Now we know from \cite[Section 1.1.3, p. 9]{Mazya_book}  that for a.a. $x'$, 
\[
X(x',x_n)=\int_0^{x_n} \frac{\p X}{\p t}(x',t)dt+f(x'), \quad f\in L^2_{\text{loc}}(\R^{n-1}),
\]
where $X$ is the representative  given in \eqref{eq_app_H1}. 
It follows that  
\[
X\in C(\R_{x_n},L^2_{\text{loc}}(\R^{n-1})). 
\]
 On the other hand, by Sobolev's trace theorem,   $H^1(\R^n)\subset C(\R_{x_n},H^{1/2}(\R^{n-1}))$, see \cite[p. 54]{Eskin_book}.  We conclude that 
 that the Sobolev trace of $X$ along $x_n=0$ agrees with $X(x',0)$ for almost all $x'$.

In order to proceed, similarly to \cite{Brown_2001}, we shall assume that $x'\in B(0,r)$ satisfies  \eqref{eq_app_H1}, \eqref{eq_app_H2} and \eqref{eq_app_H3} for both $j=1,2$.  Without loss of generality, we assume that $x'=0$.

Let us next show that  for $j=1,2$,
\begin{equation}
\label{eq_integral_0_app}
\langle ( \Lambda_{X^{(j)}} -\Lambda_0)f, \overline f \rangle_{H^{-1/2}(\p M)\times H^{1/2}(\p M)}=  \int_M X^{(j)}(u^{(j)})\overline{v}dV,
\end{equation}
 for all $u^{(j)}, v\in H^1(M^0)$ satisfying  
\begin{equation}
\label{eq_9_10}
\begin{aligned}
(-\Delta+X^{(j)})u^{(j)}&=0\quad \textrm{in}\quad \mathcal{D}'(M^0),\\
u^{(j)}|_{\p M}&=f,
\end{aligned}
\end{equation}
and 
\begin{equation}
\label{eq_9_10_v}
\begin{aligned}
-\Delta v&=0\quad \textrm{in}\quad \mathcal{D}'(M^0),\\
v|_{\p M}&=f,
\end{aligned}
\end{equation}
 and $f\in H^{1/2}(\p M)$. Indeed, by \eqref{eq_int_2_2} and \eqref{eq_int_2_2_Dirichlet_to_Neumann}, we get 
 \[
 \langle \Lambda_{X^{(j)}}f, \overline{f} \rangle_{H^{-1/2}(\p M)\times H^{1/2}(\p M)}= \int_M \langle \nabla u^{(j)},  \nabla \overline{v}\rangle dV +  \int_M X^{(j)}(u^{(j)})\overline{v}dV,
 \]
 and 
 \begin{align*}
 \langle \Lambda_{0}\overline{f}, f \rangle_{H^{-1/2}(\p M)\times H^{1/2}(\p M)}&= \int_M \langle \nabla\overline{v} ,  \nabla u^{(j)}\rangle dV\\
 &= \int_M \langle \nabla\overline{v} ,  \nabla v\rangle dV = \langle \Lambda_{0} f, \overline{f} \rangle_{H^{-1/2}(\p M)\times H^{1/2}(\p M)},
 \end{align*}
and therefore, taking the difference we obtain \eqref{eq_integral_0_app}. 
 
Now as $\Lambda_{X^{(1)}}=\Lambda_{X^{(2)}}$, we conclude from  \eqref{eq_integral_0_app} that 
\begin{equation}
\label{eq_integral_0_app_new}
\int_M X^{(1)}(u^{(1)})\overline{v}dV= \int_M X^{(2)}(u^{(2)})\overline{v}dV,
\end{equation}
 for all $u^{(j)}, v\in H^1(M^0)$ solutions to  \eqref{eq_9_10} and \eqref{eq_9_10_v}.

Our goal is to construct some special solutions to \eqref{eq_9_10} and \eqref{eq_9_10_v}, whose boundary values have an oscillatory behavior while becoming increasingly concentrated near $x_0=0$. When doing so, we shall assume, as we may, that 
\begin{equation}
\label{eq_Laplace_boundary-nc_2}
g^{\alpha \beta}(0)=\delta^{\alpha \beta}, \quad 1\le \alpha,\beta\le n-1,
\end{equation}
and therefore $T_0\p M=\R^{n-1}$, equipped with the Euclidean metric.  The unit tangent vector $\tau$ is then given by $\tau=(\tau',0)$ where $\tau'\in \R^{n-1}$, $|\tau'|=1$.   Associated to the tangent vector $\tau'$ is the covector $\xi'_\alpha=\sum_{\beta=1}^{n-1} g_{\alpha \beta}(0) \tau'_\beta=\tau'_\alpha\in T^*_{x_0}\p M$.
Let $\eta\in C^\infty_0(\R^n,\R)$ be a function such that $\supp(\eta)$ is in a small neighborhood of $0$, and 
\begin{equation}
\label{eq_int_eta_1}
\int_{\R^{n-1}}\eta(x',0)^2dx'=1.
\end{equation}
Following  \cite{Brown_2001}, \cite{Brown_Salo_2006}, in the boundary normal coordinates,  we set 
\begin{equation}
\label{eq_9_1}
v_0(x)=\eta\bigg(\frac{x}{\lambda^{1/2}}\bigg)e^{\frac{i}{\lambda}(\tau'\cdot x'+ ix_n)}, \quad 0<\lambda\ll 1,
\end{equation}
so that  $v_0\in C^\infty(M)$ with $\supp(v_0)$ in  $\mathcal{O}(\lambda^{1/2})$ neighborhood of $x_0=0$. Here $\tau'$ is viewed as a covector. 

Now we set
\[
f=v_0|_{\p M},
\]
and thus, $v=v_0+v_1$ solves  \eqref{eq_9_10_v} if $v_1\in H^1_0(M^0)$ is the unique solution to the following Dirichlet problem for the Laplacian,
\begin{equation}
\label{eq_9_Dirichlet_lapl}
\begin{aligned}
-\Delta v_1=&\Delta v_0, \quad\textrm{in}\quad \mathcal{D}'(M^0),\\
v_1|_{\p M}=&0.   
\end{aligned}
\end{equation}

Let $\delta(x)$ be the distance from $x\in M$ to the boundary of $M$.  Similarly  to \cite{Brown_Salo_2006}, we shall need the following estimates, established in \cite{Krup_Uhlmann_magnet_manifolds} in the case of Riemannian manifolds,
\begin{equation}
\label{eq_9_3} 
\|v_0\|_{L^2(M)}\le \mathcal{O}(\lambda^{\frac{n-1}{4}+\frac{1}{2}}),
\end{equation}
\begin{equation}
\label{eq_9_4}
\|v_1\|_{L^2(M)}\le \mathcal{O}(\lambda^{\frac{n-1}{4}+\frac{1}{2}}),
\end{equation}
\begin{equation}
\label{eq_9_15}
\|d v_1\|_{L^2(M)}\le \mathcal{O}(\lambda^{\frac{n-1}{4}}),
\end{equation}
\begin{equation}
\label{eq_Brown_Salo_2_18}
\|\delta dv_0\|_{L^2(M)}\le \mathcal{O}(\lambda^{\frac{n-1}{4}+\frac{1}{2}}),
\end{equation}
We shall also need Hardy's inequality, 
\begin{equation}
\label{eq_Hardy}
\int_{M}|\psi (x)/\delta(x)|^2 dV\le C\int_{M}|d \psi(x)|^2dV,
\end{equation}
where $\psi\in H^1_0(M^0)$, see \cite{Davies_2000}.

Now $u^{(j)}=v_0+w^{(j)}$ solves  \eqref{eq_9_10} if $w^{(j)}\in H^1_0(M^0)$ is the unique solution to 
\begin{equation}
\label{eq_w_j_Lax}
\begin{aligned}
(-\Delta+X^{(j)})w^{(j)}&=-(\Delta+X^{(j)})v_0\quad \textrm{in}\quad \mathcal{D}'(M^0),\\
w^{(j)}|_{\p M}&=0.
\end{aligned}
\end{equation}
Furthermore, combining the Lax--Milgram lemma with the uniqueness of solutions to \eqref{eq_w_j_Lax}, we get
\begin{equation}
\label{eq_9_10_u_1_es}
\|w^{(j)}\|_{H^{1}(M^0)}\le C\| (\Delta+X^{(j)})v_0\|_{H^{-1}(M^0)}.
\end{equation}
In order to estimate the right hand side of \eqref{eq_9_10_u_1_es}, we  recall the following bound from \cite[Appendix]{Krup_Uhlmann_magnet_manifolds}, 
\begin{equation}
\label{eq_9_10_u_1_es_2}
\|\Delta v_0\|_{H^{-1}(M^0)}\le \mathcal{O}(\lambda^{\frac{n-1}{4}}).
\end{equation}
Letting $\varphi\in C^\infty_0(M^0)$ and using \eqref{eq_Hardy}, \eqref{eq_Brown_Salo_2_18}, we get
\[
|\langle  X^{(j)}v_0, \varphi\rangle_{M^0} |\le \mathcal{O}(1)\| \delta dv_0\|_{L^2(M)}\|\varphi\|_{H^{1}(M^0)}\le 
\mathcal{O}(\lambda^{\frac{n-1}{4}+\frac{1}{2}}),
\]
showing that 
\begin{equation}
\label{eq_9_10_u_1_es_3}
\| X^{(j)}v_0\|_{H^{-1}(M^0)}\le \mathcal{O}(\lambda^{\frac{n-1}{4}+\frac{1}{2}}).
\end{equation}
It follows from \eqref{eq_9_10_u_1_es}, \eqref{eq_9_10_u_1_es_2} and \eqref{eq_9_10_u_1_es_3} that 
\begin{equation}
\label{eq_9_10_u_1_es_4}
\|w^{(j)}\|_{H^{1}(M^0)}\le \mathcal{O}(\lambda^{\frac{n-1}{4}}).
\end{equation}

The next step is to substitute the solutions $u^{(j)}$ and $v$ of  \eqref{eq_9_10} and \eqref{eq_9_10_v} into the identity \eqref{eq_integral_0_app_new}, multiply it by $\lambda^{-\frac{(n-1)}{2}}$ and compute the limit as $\lambda\to 0$.   To this end, we write $X=X^{(j)}$,  $u=u^{(j)}$, $w=w^{(j)}$,  $j=1,2$, as before, and let
 \begin{equation}
\label{eq_def_I}
I:= \lambda^{-\frac{(n-1)}{2}}  \int_M X(u)\overline{v}  dV=I_1+I_2+I_3,
\end{equation}
where 
\begin{align*}
I_1&= \lambda^{-\frac{(n-1)}{2}}   \int_M X(v_0)\overline{v_0} dV,\\
I_2&= \lambda^{-\frac{(n-1)}{2}}  \int_M X(v_0)\overline{v_1}dV,\\
I_3&= \lambda^{-\frac{(n-1)}{2}}  \int_M X(w)(\overline{v_0}+\overline{v_1})  dV.
\end{align*}

Let us compute $\lim_{\lambda\to 0}I_1$.   To that end, writing $X=X_j\p_{x_j} $, we  first get
\begin{equation}
\label{eq_v_0_grad}
X( v_0)= e^{\frac{i}{\lambda}(\tau'\cdot x'+i x_n)}  \bigg[ \lambda^{-1/2} X(x)\cdot (\nabla \eta)\bigg(\frac{x}{\lambda^{\frac{1}{2}}}\bigg) +  i\lambda^{-1} X(x)\cdot (\tau', i)  \eta\bigg(\frac{x}{\lambda^{\frac{1}{2}}}\bigg) \bigg],
\end{equation}
and 
\[
X( v_0)\overline{v}_0= e^{-\frac{2x_n}{\lambda}}   \bigg[ \lambda^{-1/2} X(x)\cdot  (\nabla \eta)\bigg(\frac{x}{\lambda^{\frac{1}{2}}}\bigg) \eta\bigg(\frac{x}{\lambda^{\frac{1}{2}}} \bigg)+  i\lambda^{-1} X(x)\cdot (\tau',i)  \bigg|\eta\bigg(\frac{x}{\lambda^{\frac{1}{2}}}\bigg)\bigg|^2 \bigg]
\]
Since
\[
\bigg|\lambda^{-\frac{(n-1)}{2}}   \int_{M}   e^{-\frac{2x_n}{\lambda}}    \lambda^{-1/2}X(x) \cdot (\nabla \eta)\bigg(\frac{x}{\lambda^{\frac{1}{2}}}\bigg) \eta\bigg(\frac{x}{\lambda^{\frac{1}{2}}} \bigg) dV\bigg|=\mathcal{O}(\lambda^{1/2}),
\]
as $\lambda\to 0$, we get
\begin{equation}
\label{eq_app_100_-2}
\begin{aligned}
\lim_{\lambda\to 0} I_1&= \lim_{\lambda\to 0} i \lambda^{-\frac{(n+1)}{2}} \int_{\R^n,x_n> 0} (\tau',i) \cdot X(x)  \bigg|\eta\bigg(\frac{x}{\lambda^{\frac{1}{2}}}\bigg)\bigg|^2 e^{-\frac{2x_n}{\lambda}} |g(x)|^{1/2}dx\\
&= \lim_{\lambda\to 0}(I_{1,1}+I_{1,2}+I_{1,3}),
\end{aligned}
\end{equation}
where 
\begin{align*}
I_{1,1}&=i \lambda^{-\frac{(n+1)}{2}}(\tau',i) \cdot  X(0)  \int_{\R^n,  x_n> 0}  \bigg|\eta\bigg(\frac{x}{\lambda^{\frac{1}{2}}}\bigg)\bigg|^2 e^{-\frac{2x_n}{\lambda}} |g(x)|^{1/2}dx,\\
I_{1,2}&=i \lambda^{-\frac{(n+1)}{2}} \int_{\R^n, 0< x_n< \mu}  (\tau',i) \cdot (  X(x',x_n) -  X(0,0))   \bigg|\eta\bigg(\frac{x}{\lambda^{\frac{1}{2}}}\bigg)\bigg|^2 e^{-\frac{2x_n}{\lambda}} |g(x)|^{1/2}dx, \\
I_{1,3}&=i \lambda^{-\frac{(n+1)}{2}} \int_{\R^n,  x_n> \mu} (\tau',i) \cdot ( X(x',x_n) - X(0,0))   \bigg|\eta\bigg(\frac{x}{\lambda^{\frac{1}{2}}}\bigg)\bigg|^2 e^{-\frac{2x_n}{\lambda}} |g(x)|^{1/2}dx,
\end{align*}
for some $\mu>0$ to be chosen.  
Making the change of variables $x'=\lambda^{1/2}y'$, $x_n=\lambda y_n$, and using \eqref{eq_Laplace_boundary-nc_2}, \eqref{eq_int_eta_1}, we obtain that 
\begin{equation}
\label{eq_app_100_-1}
\begin{aligned}
\lim_{\lambda\to 0} I_{1,1}&=i (\tau',i) \cdot X(0) \lim_{\lambda\to 0} \int_{\R^n,  y_n> 0}  |\eta(y',\lambda^{1/2}y_n)|^2  e^{- 2y_n} |g(\lambda^{1/2}y', \lambda y_n)|^{1/2}dy\\
&=\frac{i}{2}(\tau',i) \cdot  X(0).
\end{aligned}
\end{equation}
Using that $ X\in L^\infty$, we also get 
\begin{equation}
\label{eq_app_100_0}
|I_{1,3}|\le \mathcal{O}(\lambda^{-\frac{(n+1)}{2}}\lambda^{\frac{n-1}{2}} e^{-\frac{2\mu}{\lambda}})=\mathcal{O}_\mu(\lambda^{\infty}).
\end{equation}
We have furthermore 
\begin{equation}
\label{eq_app_100_1}
\begin{aligned}
|I_{1,2}|&\le \mathcal{ O}(\lambda^{-\frac{(n+1)}{2}}) \int_{\R^n, 0< x_n< \mu}  |  X(x',x_n) -  X(0,0)|   \bigg|\eta\bigg(\frac{x}{\lambda^{\frac{1}{2}}}\bigg)\bigg|^2 e^{-\frac{2x_n}{\lambda}} |g(x)|^{1/2}dx\\
&\le  \mathcal{ O}(\lambda^{-\frac{(n+1)}{2}}) (I_{1,2,1}+ I_{1,2,2}), 
\end{aligned}
\end{equation}
where 
\begin{align*}
I_{1,2,1}&=\int_{\R^n, 0< x_n< \mu}  | X(x',x_n) -  X(x',0)|   \bigg|\eta\bigg(\frac{x}{\lambda^{\frac{1}{2}}}\bigg)\bigg|^2 e^{-\frac{2x_n}{\lambda}} |g(x)|^{1/2}dx,\\
I_{1,2,2}&=\int_{\R^n, 0< x_n< \mu}  |  X(x',0) -  X(0,0)|   \bigg|\eta\bigg(\frac{x}{\lambda^{\frac{1}{2}}}\bigg)\bigg|^2 e^{-\frac{2x_n}{\lambda}} |g(x)|^{1/2}dx.
\end{align*}
Using that $0\in \R^{n-1}$ is a Lebesgue point for the function $x'\mapsto X(x',0)$, by \eqref{eq_app_H3}, we get 
\begin{equation}
\label{eq_app_100_2}
\begin{aligned}
|I_{1,2,2}|&\le \mathcal{O} (\lambda)  \int_{|x'|\le c\lambda^{1/2}}  |  X(x',0) -  X(0,0)|  dx'\\
&\le \mathcal{O} (\lambda\lambda^{\frac{n-1}{2}})\frac{1}{\lambda^{\frac{n-1}{2}}} \int_{|x'|\le c\lambda^{1/2}}  |  X(x',0) -  X(0,0)|  dx'=
o(\lambda^{\frac{n+1}{2}}).
\end{aligned}
\end{equation}
By  \eqref{eq_app_H2}, we obtain that  
\begin{equation}
\label{eq_app_100_3}
\begin{aligned}
|I_{1,2,1}|&\le \mathcal{O}(\lambda) \int_{|x'|\le c\lambda^{1/2}} \sup_{0<x_n<\mu} |  X(x',x_n) -  X(x',0)|  dx' \\
&=\mathcal{O}(\lambda) \int_{|x'|\le c\lambda^{1/2}}  X^*_\mu(x')dx'\le  \mathcal{O}(\lambda \lambda^{\frac{n-1}{2}})\frac{1}{\lambda^{\frac{n-1}{2}}} \int_{|x'|\le c\lambda^{1/2}}  X^*_\mu(x')dx'\\
&\le \mathcal{O}(\lambda^{\frac{n+1}{2}}) M(X^*_\mu) (0)=\mathcal{O}(\lambda^{\frac{n+1}{2}})o(1), 
\end{aligned}
\end{equation}
as $\mu\to 0^+$. It follows from \eqref{eq_app_100_0}, \eqref{eq_app_100_1}, \eqref{eq_app_100_2} and \eqref{eq_app_100_3} that  
$\lim_{\lambda\to 0}(I_{1,2}+I_{1,3})=0$. Combining this with \eqref{eq_app_100_-2} and \eqref{eq_app_100_-1}, we get 
\begin{equation}
\label{eq_app_100_4}
\lim_{\lambda\to 0}I_1=\frac{i}{2}(\tau',i) \cdot  X(0). 
\end{equation}

Recalling that  $v_1\in H^1_0(M^0)$ and using \eqref{eq_Hardy}, \eqref{eq_Brown_Salo_2_18} and \eqref{eq_9_15}, we obtain that 
\begin{equation}
\label{eq_app_100_5}
\begin{aligned}
|I_2|&\le \mathcal{O}(\lambda^{-\frac{(n-1)}{2}}) \|X\|_{L^\infty(M)}\|\delta d v_0\|_{L^2(M)} \| v_1/\delta\|_{L^2(M)}\\
&\le \mathcal{O}(\lambda^{-\frac{(n-1)}{2}})\mathcal{O}(\lambda^{\frac{n-1}{4}+\frac{1}{2}})  \| dv_1\|_{L^2(M)}=\mathcal{O}(\lambda^{\frac{1}{2}}). 
\end{aligned}
\end{equation}

By    \eqref{eq_9_3}, \eqref{eq_9_4}, and \eqref{eq_9_10_u_1_es_4},  we get 
\begin{equation}
\label{eq_app_100_6}
|I_3|\le  \mathcal{O}(\lambda^{-\frac{(n-1)}{2}}) \|X\|_{L^\infty(M)}\| d w\|_{L^2(M)} \| v_0+v_1\|_{L^2(M)}=\mathcal{O}(\lambda^{\frac{1}{2}}). 
\end{equation}

Now it follows from \eqref{eq_def_I}, \eqref{eq_app_100_4}, \eqref{eq_app_100_5}, and  \eqref{eq_app_100_6} that 
\[
\lim_{\lambda\to 0}I=\frac{i}{2}(\tau',i) \cdot  X(0),
\]
and therefore, \eqref{eq_integral_0_app_new} implies that 
\[
(\tau',i) \cdot  X^{(1)}(0)=(\tau',i) \cdot  X^{(2)}(0),
\]
for all $\tau'\in \R^{n-1}$. This completes the proof of Theorem \ref{prop_boundary_rec}. 
\end{proof}

A simplified version of the discussion above gives also the following result. 
\begin{cor} 
\label{cor_boundary_rec}
Let $(M,g)$ be a smooth compact Riemannian manifold of dimension $n\ge 3$ with smooth boundary $\p M$, and  let $X^{(1)}, X^{(2)}\in C(M, TM)$.  Assume that $\Lambda_{ X^{(1)}}=\Lambda_{ X^{(2)}}$. 
Then 
$X^{(1)}|_{\p M}=X^{(2)}|_{\p M}$. 
\end{cor}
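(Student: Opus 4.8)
The plan is to follow the proof of Theorem~\ref{prop_boundary_rec} essentially line by line, deleting all of the measure-theoretic machinery: since $M$ is compact, $X^{(j)}\in C(M,TM)$ is automatically an element of $L^\infty(M,TM)$, and continuity up to $\p M$ plays the role of the absolutely continuous representative, the Lebesgue points, and the Hardy--Littlewood maximal operator. First I would fix $x_0\in\p M$, pass to boundary normal coordinates centered at $x_0$, so that $x_0=0$, $\p M=\{x_n=0\}$ and $M^0=\{x_n>0\}$, and arrange $g^{\alpha\beta}(0)=\delta^{\alpha\beta}$ for $1\le\alpha,\beta\le n-1$; it then suffices to show $X^{(1)}(0)=X^{(2)}(0)$.

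Exactly as in \eqref{eq_integral_0_app}--\eqref{eq_integral_0_app_new}, the hypothesis $\Lambda_{X^{(1)}}=\Lambda_{X^{(2)}}$ yields
\[
\int_M X^{(1)}(u^{(1)})\,\overline v\,dV=\int_M X^{(2)}(u^{(2)})\,\overline v\,dV
\]
for all $u^{(j)},v\in H^1(M^0)$ satisfying $(-\Delta+X^{(j)})u^{(j)}=0$, $-\Delta v=0$ in $M^0$ and $u^{(j)}|_{\p M}=v|_{\p M}=f$. I would then take $f=v_0|_{\p M}$ with the oscillating, concentrating family $v_0$ of \eqref{eq_9_1}, and write $v=v_0+v_1$, $u^{(j)}=v_0+w^{(j)}$ as in \eqref{eq_9_Dirichlet_lapl} and \eqref{eq_w_j_Lax}. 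The a priori bounds \eqref{eq_9_3}--\eqref{eq_Brown_Salo_2_18} and \eqref{eq_9_10_u_1_es_2}--\eqref{eq_9_10_u_1_es_4} use only $X^{(j)}\in L^\infty$, so they carry over verbatim; hence, with $I:=\lambda^{-(n-1)/2}\int_M X(u)\,\overline v\,dV=I_1+I_2+I_3$ exactly as in \eqref{eq_def_I}, the estimates \eqref{eq_app_100_5} and \eqref{eq_app_100_6} still give $I_2,I_3=\mathcal{O}(\lambda^{1/2})\to0$.

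The only genuinely new point is the evaluation of $\lim_{\lambda\to0}I_1$, and this is where continuity shortens the argument, making the splitting $I_1=I_{1,1}+I_{1,2}+I_{1,3}$ of the proof of Theorem~\ref{prop_boundary_rec} unnecessary. Starting from \eqref{eq_v_0_grad}, the term involving $\nabla\eta$ contributes $\mathcal{O}(\lambda^{1/2})$, and after the change of variables $x'=\lambda^{1/2}y'$, $x_n=\lambda y_n$ one is left with the integral over $\{y_n>0\}$ of $i(\tau',i)\cdot X(\lambda^{1/2}y',\lambda y_n)$ against $|\eta(y',\lambda^{1/2}y_n)|^2\,e^{-2y_n}\,|g(\lambda^{1/2}y',\lambda y_n)|^{1/2}$. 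Writing $X(x)=X(0)+(X(x)-X(0))$, the $X(0)$ piece converges to $\frac{i}{2}(\tau',i)\cdot X(0)$ by dominated convergence, using that $\eta(\cdot,0)$ has compact support in $y'$, that $e^{-2y_n}$ is integrable in $y_n>0$, and that $|g(0)|^{1/2}=1$, $\int_{\R^{n-1}}|\eta(y',0)|^2\,dy'=1$; the remainder piece tends to $0$ because, given $\varepsilon>0$, one first discards the tail $y_n>N$ (of size $\mathcal{O}(e^{-2N})$ uniformly in $\lambda$, as $X\in L^\infty$) and then uses that $X(\lambda^{1/2}y',\lambda y_n)\to X(0)$ uniformly on $\{|y'|\le R,\ 0<y_n\le N\}$ by continuity of $X$ at $0$.

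Thus $\lim_{\lambda\to0}I=\frac{i}{2}(\tau',i)\cdot X^{(j)}(0)$ for $X^{(j)}$ in place of $X$, and the identity above forces $(\tau',i)\cdot X^{(1)}(0)=(\tau',i)\cdot X^{(2)}(0)$ for every $\tau'\in\R^{n-1}$; separating the real part, which controls the tangential components of $X^{(j)}(0)$, from the imaginary part, which controls the normal component, yields $X^{(1)}(0)=X^{(2)}(0)$, and since $x_0\in\p M$ was arbitrary, the corollary follows. I do not expect a real obstacle here, since the content is strictly a simplification of Theorem~\ref{prop_boundary_rec}; the one point requiring a little care is verifying that dominated convergence genuinely applies in the $I_1$ computation, which it does thanks to the compact support of $\eta$ together with the exponential weight $e^{-2y_n}$.
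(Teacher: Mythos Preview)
Your proposal is correct and is precisely the ``simplified version of the discussion above'' that the paper invokes as its proof of the corollary: you rerun the argument of Theorem~\ref{prop_boundary_rec}, replacing the Lebesgue-point/maximal-function machinery in the $I_1$ computation by a direct appeal to continuity of $X$ at $x_0$, while the remaining estimates (which rely only on $X^{(j)}\in L^\infty$) carry over verbatim. The dominated-convergence/tail-cutting justification you give for $\lim_{\lambda\to 0}I_1$ is exactly the needed simplification.
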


\section{Approximation estimates}

\label{sec_approx_est}

The purpose of this appendix is to collect some approximation results which are used in the main part of the paper. The estimates are well known and are given here for the convenience of the reader, see \cite{Krup_Uhlmann_calderon}, \cite{Zhang}.

Let $\Psi_\tau(x)=\tau^{-n}\Psi(x/\tau)$, $\tau>0$, be the usual mollifier with $\Psi\in C^\infty_0(\R^n)$, $0\le \Psi\le 1$, and $\int \Psi dx=1$. 

\begin{prop}
\label{prop_app_R_n}
Let $f\in H^s(\R^n)$ with some  $0\le s\le 1$. Then $f_\tau=f*\Psi_\tau\in (H^s\cap C^\infty)(\R^n)$ satisfies the following estimates
\begin{equation}
\label{eq_approxim_1}
\|f-f_\tau\|_{L^2(\R^n)}=o(\tau^s),\quad \tau\to 0,
\end{equation}
\begin{equation}
\label{eq_approxim_2}
\begin{aligned}
\|f_\tau\|_{L^2(\R^n)}=&\mathcal{O}(1), \quad \|\nabla f_\tau\|_{L^2(\R^n)}=\begin{cases} o(\tau^{s-1}), & 0\le s<1,\\
\mathcal{O}(1), & s=1,
\end{cases} \\
 &\|\p^\alpha f_\tau\|_{L^2(\R^n)}= o(\tau^{s-|\alpha|}),  \quad |\alpha|\ge 2, 
\end{aligned}
\end{equation}
as $\tau\to 0$. Furthermore, if $f\in L^\infty(\R)$, we have
\begin{equation}
\label{eq_approxim_3}
\|f_\tau\|_{L^\infty(\R^n)}=\mathcal{O}(1), \quad \|\p^\alpha f_\tau\|_{L^\infty(\R^n)}= \mathcal{O}(\tau^{-|\alpha|}), \quad  |\alpha|\ge 1, 
\end{equation}
as $\tau\to 0$. 
\end{prop}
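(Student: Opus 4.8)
The plan is to do everything on the Fourier side and to reduce all the estimates to one dominated-convergence lemma. Write $\widehat\Psi$ for the Fourier transform of $\Psi$; it is a Schwartz function with $\widehat\Psi(0)=\int\Psi\,dx=1$. We may and do assume $\Psi(-x)=\Psi(x)$ (otherwise replace $\Psi$ by $x\mapsto\tfrac12(\Psi(x)+\Psi(-x))$, which changes none of the hypotheses on $\Psi$); then $\nabla\widehat\Psi(0)=0$, so Taylor's formula near the origin together with the boundedness of $\widehat\Psi$ gives $|1-\widehat\Psi(\eta)|\le C\min(|\eta|^{2},1)$ for all $\eta\in\R^n$, and in particular $1-\widehat\Psi$ vanishes at $\eta=0$. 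Since $\widehat{f_\tau}(\xi)=\widehat\Psi(\tau\xi)\widehat f(\xi)$ and $|\widehat\Psi|\le\|\widehat\Psi\|_{L^\infty}$, we have $f_\tau\in H^s$ with norm controlled by that of $f$, and $f_\tau\in C^\infty$ because $\Psi_\tau\in C_0^\infty$; this gives the membership $f_\tau\in(H^s\cap C^\infty)(\R^n)$.

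The lemma I would isolate is: if $m\in L^\infty(\R^n)$ satisfies $|m(\eta)|\le C\min(|\eta|^{b},1)$ for some $b>0$, then for every $f\in H^s(\R^n)$ with $0\le s<b$ one has $\|m(\tau D)f\|_{L^2}=o(\tau^s)$ as $\tau\to0$, where $m(\tau D)f$ is the function with Fourier transform $m(\tau\xi)\widehat f(\xi)$. To prove it, write $\tau^{-2s}\|m(\tau D)f\|_{L^2}^2=\int_{\R^n}\tau^{-2s}|m(\tau\xi)|^{2}|\widehat f(\xi)|^{2}\,d\xi$. For $0<\tau\le1$ the integrand is dominated by $C^{2}|\xi|^{2s}|\widehat f(\xi)|^{2}\in L^1(\R^n)$: on $\{\tau|\xi|\le1\}$ one has $\tau^{-2s}|m(\tau\xi)|^{2}\le C^{2}(\tau|\xi|)^{2b-2s}|\xi|^{2s}\le C^{2}|\xi|^{2s}$, and on $\{\tau|\xi|>1\}$ one has $\tau^{-2s}\le|\xi|^{2s}$; and for each fixed $\xi$ the integrand tends to $0$ as $\tau\to0$, because $(\tau|\xi|)^{2b-2s}\to0$ (here $b>s$ is used) and $m(0)=0$. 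Dominated convergence finishes the lemma.

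With the lemma available, \eqref{eq_approxim_1} is the case $m=1-\widehat\Psi$, $b=2$: since $f-f_\tau=m(\tau D)f$ and $2>s$ for $0\le s\le1$, we get $\|f-f_\tau\|_{L^2}=o(\tau^s)$. For \eqref{eq_approxim_2}, the bound $\|f_\tau\|_{L^2}\le\|\Psi_\tau\|_{L^1}\|f\|_{L^2}=\|f\|_{L^2}$ and, in the case $s=1$, $\|\nabla f_\tau\|_{L^2}=\|\Psi_\tau*\nabla f\|_{L^2}\le\|\nabla f\|_{L^2}$ are immediate from Young's inequality; for the remaining bounds use $\widehat{\partial^\alpha f_\tau}(\xi)=\tau^{-|\alpha|}(i\tau\xi)^{\alpha}\widehat\Psi(\tau\xi)\widehat f(\xi)$, i.e. $\partial^\alpha f_\tau=\tau^{-|\alpha|}m_\alpha(\tau D)f$ with $m_\alpha(\eta)=(i\eta)^{\alpha}\widehat\Psi(\eta)$, which satisfies $|m_\alpha(\eta)|\le C\min(|\eta|^{|\alpha|},1)$ by the Schwartz decay of $\widehat\Psi$; the lemma with $b=|\alpha|$ then gives $\|\partial^\alpha f_\tau\|_{L^2}=\tau^{-|\alpha|}o(\tau^s)=o(\tau^{s-|\alpha|})$, legitimately since $|\alpha|=1>s$ for the gradient when $s<1$ and $|\alpha|\ge2>s$ when $|\alpha|\ge2$. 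Finally \eqref{eq_approxim_3} is pure Young: $\|\partial^\alpha f_\tau\|_{L^\infty}\le\|\partial^\alpha\Psi_\tau\|_{L^1}\|f\|_{L^\infty}=\tau^{-|\alpha|}\|\partial^\alpha\Psi\|_{L^1}\|f\|_{L^\infty}$ and $\|f_\tau\|_{L^\infty}\le\|f\|_{L^\infty}$.

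The content is entirely routine; the one point that actually requires care is the endpoint $s=1$ of \eqref{eq_approxim_1}, where the conclusion $\|f-f_\tau\|_{L^2}=o(\tau)$ fails for a generic mollifier — precisely when $\int x\,\Psi(x)\,dx\neq0$ — so one must symmetrize $\Psi$ (equivalently, arrange that its first moments vanish) so that $1-\widehat\Psi$ decays quadratically rather than linearly at the origin.
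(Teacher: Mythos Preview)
Your proof is correct and follows essentially the same route as the paper's: both work on the Fourier side and reduce everything to a dominated-convergence argument, exploiting the vanishing at the origin of the relevant multiplier (and the same symmetrization/radiality trick on $\Psi$ to secure the endpoint $s=1$ of \eqref{eq_approxim_1}). Your packaging of the common step into a single lemma with parameter $b$ is slightly tidier than the paper's two separate passes, but the content is identical.
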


\begin{proof}

Let us start by proving \eqref{eq_approxim_1} when $0\le s<1$. First we have  $\hat \Psi_\tau(\xi)=\hat\Psi(\tau \xi)$,
where $\hat{\Psi}$ stands for the Fourier transform of $\Psi$, and therefore, $\hat f_\tau(\xi)=\hat f(\xi)\hat\Psi(\tau \xi)$. 
We get 
\begin{equation}
\label{eq_approxim_4}
\begin{aligned}
\tau^{-2s}\|f-f_\tau\|_{L^2(\R^n)}^2=\tau^{-2s}(2\pi)^{-n}\int_{\R^n} |1-\hat\Psi(\tau\xi)|^2|\hat f(\xi)|^2d\xi\\
=\int_{\R^n} g(\tau \xi) |\xi|^{2s}|\hat f(\xi)|^2d\xi,
\end{aligned}
\end{equation}
where
\[
g(\eta)=(2\pi)^{-n}\frac{|1-\hat \Psi(\eta)|^2}{|\eta|^{2s}}.
\]
As $\hat \Psi(0)=1$,  we have $\hat \Psi(\eta)=1+\mathcal{O}(|\eta|)$, and therefore, when $0\le s<1$,  we get $g(0)=0$.  Furthermore, as $\Psi\in C^\infty_0(\R^n)$, we see that $g$ is continuous and bounded. 
By Lebesgue's dominated convergence theorem applied to \eqref{eq_approxim_4}, using that   $f\in H^s(\R^n)$, we obtain that $\tau^{-2s}\|f-f_\tau\|_{L^2(\R^n)}^2\to 0$ as $\tau\to 0$, showing \eqref{eq_approxim_1} when $0\le s<1$. 

When proving  \eqref{eq_approxim_1} in the case $s=1$, we assume furthermore that $\Psi$ is a radial function. Then $\nabla \hat \Psi(0)=-i\int_{\R^n} x\Psi(x)dx=0$, and therefore, $\hat \Psi(\eta)=1+\mathcal{O}(|\eta|^2)$. The proof of \eqref{eq_approxim_1} in the case $s=1$ thus proceeds similarly to the one when $0\le s<1$.  

Let us now show \eqref{eq_approxim_2}. Here we do not need the assumption that $\Psi$ is radial. First by Young's inequality, we get
\[
\|f_\tau\|_{L^2(\R^n)}\le \|f\|_{L^2(\R^n)}\| \Psi_\tau\|_{L^1(\R^n)}=\|f\|_{L^2(\R^n)},
\] 
showing the first bound in \eqref{eq_approxim_2}.  For $|\alpha|\ge 1$,   we have
\begin{equation}
\label{eq_approxim_5}
\begin{aligned}
\tau^{2(|\alpha|-s)}\|\p^\alpha f_\tau\|_{L^2(\R^n)}^2 \le (2\pi)^{-n} \tau^{2(|\alpha|-s)} \int_{\R^n} |\xi|^{2|\alpha|} |\hat f(\xi)|^2|\hat \Psi(\tau\xi)|^2d\xi\\
=\int_{\R^n} \tilde g(\tau \xi) |\xi|^{2s}|\hat f(\xi)|^2d\xi,
\end{aligned}
\end{equation}
where
\[
\tilde g(\eta)=(2\pi)^{-n} |\eta|^{2(|\alpha|-s)}|\hat \Psi(\eta)|^2
\]
is continuous and bounded. 
Now when $|\alpha|\ge 2$, $0\le s\le 1$, or $|\alpha|=1$, $0\le s<1$, we have that $\tilde g(0)=0$, and thus, by Lebesgue's dominated convergence theorem applied to \eqref{eq_approxim_5}, we conclude that  $\tau^{2(|\alpha|-s)}\|\p^\alpha f_\tau\|_{L^2(\R^n)}^2 \to 0$ as $\tau\to 0$. Thus, \eqref{eq_approxim_2} is established in all cases except for $|\alpha|=s=1$.  In the latter case \eqref{eq_approxim_2} follows from \eqref{eq_approxim_5}.  The estimates \eqref{eq_approxim_3} are immediate. 

\end{proof}

We shall also  need the following well known  approximation estimates for H\"older spaces $C^{0,\gamma}(\R^n)$, see \cite[Lemma 3.1]{Pohjola}. Here 
\[
C^{0,\gamma}(\R^n)=\bigg\{f\in C(\R^n)\cap L^\infty(\R^n): \sup_{x\ne y}\frac{|f(x)-f(y)|}{|x-y|^{\gamma}}<\infty\bigg\}, \ 0<\gamma\le 1. 
\]
\begin{prop}
\label{prop_app_Holder_R_n}
Let $f\in C^{0,\gamma}(\R^n)$ with some $0<\gamma\le 1$. Then $f_\tau=f*\Psi_\tau\in C^\infty(\R^n)$  satisfies the estimates
\begin{align*}
\| f-f_\tau\|_{L^\infty(\R^n)}=\mathcal{O}(\tau^{\gamma}), \quad \|\p ^\alpha f_\tau\|_{L^\infty(\R^n)}=\mathcal{O}(\tau^{\gamma-|\alpha|}), \quad |\alpha|\ge 1,
\end{align*}
as $\tau\to 0$. 
\end{prop}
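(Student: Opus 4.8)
The plan is to exploit two elementary properties of the mollifier $\Psi$: that it has unit integral and compact support, and that every derivative $\partial^\alpha\Psi$ with $|\alpha|\ge 1$ has vanishing integral. Both estimates then follow by writing the convolution as an average of H\"older differences. Throughout, write $[f]_\gamma=\sup_{x\ne y}|f(x)-f(y)|/|x-y|^\gamma<\infty$, and recall that $\Psi_\tau(x)=\tau^{-n}\Psi(x/\tau)$ as above.

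For the first estimate, I would observe that for each $x\in\R^n$, using $\int_{\R^n}\Psi_\tau\,dy=1$ and the change of variables $y=\tau z$,
\[
f(x)-f_\tau(x)=\int_{\R^n}\bigl(f(x)-f(x-y)\bigr)\Psi_\tau(y)\,dy=\int_{\R^n}\bigl(f(x)-f(x-\tau z)\bigr)\Psi(z)\,dz.
\]
Bounding the integrand pointwise by $[f]_\gamma\,|\tau z|^\gamma\,\Psi(z)$ and using that $\Psi$ is compactly supported, so that $\int_{\R^n}|z|^\gamma\Psi(z)\,dz=:c_0<\infty$, gives $|f(x)-f_\tau(x)|\le [f]_\gamma c_0\,\tau^\gamma$ uniformly in $x$, hence $\|f-f_\tau\|_{L^\infty(\R^n)}=\mathcal{O}(\tau^\gamma)$.

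For the derivative estimate, I would use $\partial^\alpha f_\tau=f*\partial^\alpha\Psi_\tau$ with $\partial^\alpha\Psi_\tau(y)=\tau^{-n-|\alpha|}(\partial^\alpha\Psi)(y/\tau)$. Since $|\alpha|\ge 1$ and $\Psi\in C^\infty_0(\R^n)$, we have $\int_{\R^n}\partial^\alpha\Psi\,dz=0$, so we may subtract the constant $f(x)$ inside the integral:
\[
\partial^\alpha f_\tau(x)=\int_{\R^n}f(x-y)\,\partial^\alpha\Psi_\tau(y)\,dy=\tau^{-|\alpha|}\int_{\R^n}\bigl(f(x-\tau z)-f(x)\bigr)(\partial^\alpha\Psi)(z)\,dz,
\]
again after $y=\tau z$. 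Estimating the integrand by $\tau^{-|\alpha|}[f]_\gamma\,|\tau z|^\gamma\,|(\partial^\alpha\Psi)(z)|$ and using compact support of $\partial^\alpha\Psi$ yields $\|\partial^\alpha f_\tau\|_{L^\infty(\R^n)}\le C\tau^{\gamma-|\alpha|}$.

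There is essentially no obstacle here, the argument being entirely routine; the only point requiring a moment's care is the use of the vanishing moment $\int_{\R^n}\partial^\alpha\Psi\,dz=0$, which is exactly what makes subtracting $f(x)$ legitimate and produces the gain $\tau^\gamma$ rather than merely $\mathcal{O}(\tau^{-|\alpha|})$. Note also that, unlike in Proposition \ref{prop_app_R_n}, no radiality assumption on $\Psi$ is needed, since here we lose in $L^\infty$ rather than aiming for a higher-order cancellation.
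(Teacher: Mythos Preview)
Your argument is correct and is the standard one. The paper itself does not supply a proof of this proposition; it merely records the estimates as well known and refers to \cite[Lemma 3.1]{Pohjola}, so there is no proof in the paper to compare against. Your use of $\int\Psi=1$ for the first bound and of the vanishing moment $\int\partial^\alpha\Psi=0$ (valid for any $|\alpha|\ge 1$ since $\Psi\in C_0^\infty$) for the second is exactly the expected elementary route, and your remark that no radiality of $\Psi$ is needed here is accurate.
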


\end{appendix}

\section*{Acknowledgements}
The research of K.K. is partially supported by the National Science Foundation (DMS 1500703). The research of G.U. is partially supported by NSF, a Si-Yuan Professorship of HKUST and FiDiPro Professorship of the Academy of Finland.

\end{document}